\newcommand{\kk}{\Bbbk}
\newcommand{\kv}{{\kk[\mathcal{V}]}}
\newcommand{\kvg}{{\kk[\mathcal{V}]^{G}}}
\newcommand{\spec}{\mathrm{Spec}}
\newcommand{\vv}{\mathcal{V}}
\newcommand{\M}{{\mathcal{M}}}
\newcommand{\bA}{{\mathbf{A}}}
\newcommand{\bB}{{\mathbf{B}}}
\newcommand{\bC}{{\mathbf{C}}}
\newcommand{\bX}{{\mathbf{X}}}
\newcommand{\ba}{{\mathbf{a}}}
\newcommand{\bb}{{\mathbf{b}}}
\newcommand{\cc}{\mathcal{C}}
\def\SL{\operatorname{SL}}
\def\GL{\operatorname{GL}}
\def\Ga{{\mathbb G}_{a}}
\def\Hom{\operatorname{Hom}}
\def\id{\operatorname{id}}
\def\chr{\operatorname{char}}
\def\Z{\mathbb{Z}}
\def\C{\mathbb{C}}
\def\Rep{\operatorname{Rep}}
\def\im{\operatorname{im}}
\def\Ext{\operatorname{Ext}}
\def\Tr{\operatorname{Tr}}
\def\ara{\operatorname{ara}}
\def\rk{\operatorname{rk}}
\def\cdim{\operatorname{cdim}}
\def\sdim{\operatorname{sdim}}
\newtheorem{Lemma}{Lemma}[section]
\newtheorem{Theorem}[Lemma]{Theorem}
\newtheorem{cor}[Lemma]{Corollary}
\newtheorem{prop}[Lemma]{Proposition}
\newtheorem{conj}[Lemma]{Conjecture}
\theoremstyle{definition}
  \newtheorem{Def}[Lemma]{Definition}  
\theoremstyle{remark}
  \newtheorem{rem}[Lemma]{Remark}
\newtheorem{eg}[Lemma]{Example}
\newtheoremstyle{Acknowledgments}
  {}
    {}
     {}
     {}
    {\bfseries}
    {}
     {.5em}
     {\thmname{#1}\thmnumber{ }\thmnote{ (#3)}}
\theoremstyle{Acknowledgments}
\newtheorem{ack}{Acknowledgments.}
\title{The separating variety for matrix invariants}
\author{Jonathan Elmer}
\address{Middlesex University\\
The Burroughs, Hendon, London\\
NW4 4BT UK}
\email{j.elmer@mdx.ac.uk}
\date{\today}
\subjclass[2020]{16G20, 13A50, 05E40, 14R20}
\keywords{quivers, matrix, invariants, separating set, separating variety, group action, affine variety, Hertzsprung's problem}
\begin{document}

\maketitle

\begin{abstract} 
Let $G$ be a linear algebraic group defined over an algebraically closed field $\kk$, and let $\vv$ be a vector space on which $G$ acts linearly. Introduced in \cite{KemperCompRed}, the separating variety $\mathcal{S}_{G,\vv}$ is the subvariety of $\vv^2$ consisting of pairs of points indistinguishable by invariant polynomials in $\kvg$. Its geometry places restrictions on the existence of small separating sets, i.e. sets of invariants which distinguish the same points as the full algebra of invariants.

 It is well known that the cardinality of a separating set for $\kvg$ is bounded below by $\dim(\kvg)$. A separating set of cardinality $\dim(\kvg)$ is called a polynomial separating set, and a separating set of cardinality $\dim(\kvg)+1$ is called a hypersurface separating set. 

The purpose of this article is to study the separating variety in the important special case where $G=\GL_p(\C)$ acts on the set $\vv = \M_p^n$ of $n$-tuples of $p \times p$ matrices by simultaneous conjugation. We define a purely combinatorial poset, $\mathcal{P}_{p,n}$, whose maximal elements are in 1-1 correspondence with the irreducible components of $\mathcal{S}_{G,\vv}$. We show that $\mathcal{S}_{G,\vv}$ is a variety of dimension $(n+1)p^2-1$, and determine its subdimension (i.e. the dimension of its smallest component) for all $n$ and $p$. In particular we show the subdimension is $(n+1)p^2-p$ if $n \geq 3$, or $n \geq 2$ and $p \geq 4$. In the case $n \geq 3$, we give a formula for the number of components of given codimension in $\mathcal{S}_{G,\vv}$.
We give explicit decompositions of  $\mathcal{S}_{G,\vv}$ for all $n$ where $p=2,3$ or $4$.

Our results in particular show that when $n \geq 2$ and $p \geq 4$, or $n \geq 3$ and $p = 3$, $\C[\vv]^G$ does not contain a polynomial or hypersurface separating set. It was proven in \cite{ElmerMatrixSepVar} that the same is true if $n \geq 4$ and $p=2$. 

The author made a conjecture in \cite{ElmerSemiSepVar} generalising the Skronowski-Weyman theorem for representations of quivers. The results of this paper prove that conjecture in two important special cases: for the quiver with one vertex and an arbitrary number, $n$, of loops, and for the quiver with two vertices and $n$ arrows between them.
\end{abstract}

\section{Background}
\subsection{Representations of quivers}
A {\it quiver} is a quadruple $Q = (Q_v,Q_a,t,h)$, consisting of two ordered sets $Q_v$ (vertices)  and $Q_a$ (arrows), along with two functions $t, h: Q_a \rightarrow Q_v$ (tail and head respectively). It is usually visualised as a directed graph with a node for each element of $Q_v$, and for each $a \in Q_a$ a directed edge leading from $t(a)$ to $h(a)$. We will deal only with finite quivers, so we may choose an order on the vertices and on the arrows, setting $Q_v = \{x_1, \ldots, x_k\}$ and $Q_a = \{a_1, \ldots, a_n\}$.

Let $\kk$ be any field.  A {\it representation} $V$ of the quiver $Q$ over $\kk$ is an assignment to each vertex $x \in Q_v$ of a vector space $V(x)$, and to each arrow $a \in Q_a$ of a linear map $V(a): V(t(a)) \rightarrow V(h(a))$.  We write $\alpha = (\alpha(x): x \in Q_v)$ where $\alpha(x) = \dim(V(x))$ for all $x \in Q_v$; this is called the {\it dimension vector} of $V$. A homomorphism $\phi: V \rightarrow W$ between representations $V, W$ of $Q$ is a collection of linear maps $(\phi(x): x \in Q_v)$ such that $\phi(x): V(x) \rightarrow W(x)$
for all $x \in Q_v$, and for all $a \in Q_a$ the following diagram commutes:\\

\begin{center}
\begin{tikzpicture}
\node[left] at (0,0){$V(t(a))$};
\node[right] at (3,0){$V(h(a))$};
\node[left] at (0,-2){$W(t(a))$};
\node[right] at (3,-2){$W(h(a))$};
\draw[->] (0,0)--(3,0);
\node[above] at (1.5,0){$V(a)$};
\draw[->] (0,-2)--(3,-2);
\node[above] at (1.5,-2){$W(a)$};
\draw[->] (-0.5,-0.5) -- (-0.5,-1.5);
\node[left] at (-0.5,-1){$\phi(t(a))$};
\draw[->] (3.5,-0.5) -- (3.5,-1.5);
\node[right] at (3.5,-1){$\phi(h(a))$};
\end{tikzpicture}
\end{center}

We write $\Hom_Q(V,W)$ for the set of homomorphisms from $V$ to $W$, which is naturally a $\kk$-vector space. It is clear that the composition of two homomorphisms is again a homomorphism, and there is an obvious identity homomorphism $\id: V \rightarrow V$. Thus, we have a category $\Rep_{\kk}(Q)$ of representations of $Q$ over $\kk$. Denote by $\Rep_{\kk}(Q,\alpha)$ the set of representations of $Q$ with dimension vector $\alpha$. To ease notation we will drop the $\kk$ where the field is known. 

A homomorphism $\phi: V \rightarrow W$ is an isomorphism if $\phi(x)$ is an isomorphism for each $x \in Q_v$.  We write $V \cong W$ if $V$ and $W$ are isomorphic. We say that $W$ is a subrepresentation of $V$ if $W(x) \leq V(x)$ for all $x \in Q_v$ and the collection of inclusion maps $i(x): W(x) \hookrightarrow V(x)$ is a homomorphism $W \rightarrow V$. We write $W \leq V$ if $W$ is a subrepresentation of $V$. It is clear that $V \leq V$ and $0 \leq V$, where $0$ denotes the representation of $Q$ in which $0(x)$ is the zero space for all $x \in Q_v$ and $0(a)$ is the zero map for all $a \in Q_a$. We say $V$ is {\it simple} if it has no subrepresentations other than these. Further, if $W \leq V$ then we may define a quotient representation $V/W$ as follows: $(V/W)(x) = V(x)/W(x)$ for all $x \in Q_v$ and $(V/W)(a)$ is the map $V(t(a))/W(t(a)) \rightarrow (V(h(a))/W(h(a))$ induced by $V(a)$ (the commuting of the diagram above ensures this is well-defined).

There is a natural notion of direct sum for representations of a given quiver: if $V$ and $W$ are representations of $Q$ over $\kk$ with dimension vectors $\alpha$ and $\beta$ respectively, then $V \oplus W$ is the representation of $Q$ over $\kk$ with dimension vector $\alpha+\beta \in \Z^k$ defined by
\[(V \oplus W)(x) = V(x) \oplus W(x)\] for all $x \in Q_v$ and
\[(V \oplus W)(a) = V(a) \oplus W(a)\] for all $a \in Q_a$. One can show that this notion of direct sum makes $\Rep_{\kk}(Q)$ into a $\kk$-linear abelian category.  If $V$ is isomorphic to a direct sum of simple representations we say $V$ is {\it semisimple}.

A representation is said to be {\it indecomposable} if it is not isomorphic to the direct sum of two non-trivial representations. The Krull-Remak-Schmidt Theorem \cite[Theorem~1.7.4]{DerksenWeyman} states that every representation of $Q$ may be written as a direct sum of indecomposable representations and this decomposition is unique up to reordering the summands. Thus, describing the representations of $Q$ up to isomorphism is reduced to the problem of describing the isomorphism classes of indecomposable representations. The quiver $Q$ is said to have:

\begin{enumerate}
\item {\it finite representation type} if $Q$ has only finitely many isomorphism classes of indecomposable representations;
\item {\it tame representation type} if the indecomposable representations of $Q$ in each dimension vector up to isomorphism occur in finitely many one-parameter families;
\item {\it wild representation type} otherwise.
\end{enumerate}

Let $V \in \Rep(Q, \alpha)$. If $V$ is not simple then we may find a subrepresentation $V' \leq V$ such that $V/V'$ is simple. Iterating, we may find a composition series
\[0 = V_0 \leq V_1 \leq V_2 \leq \cdots \leq V_r = V\] in which $S_i:=V_i/V_{i-1}$ is simple for all $i \geq 1$. By the Jordan-H\"older theorem the simple factors $S_i$ are unique up to order. We define $V_{ss}:= \bigoplus_{i=1}^r S_i$, noting that $V$ is semisimple if and only if $V \cong V_{ss}$. 

\subsection{Matrix Interpretation}

For any $p,q \geq$ we let $\M_{p,q}(\kk)$ denote the set of $p \times q$ matrices over $\kk$. We will usually write simply $\M_{p,q}$ for ease of notation, and $\M_{p}$ for $\M_{p,p}$. Let $V$ be a representation of the quiver $Q$ with $|Q_v| = k$, $|Q_a|=n$. By choosing a basis of each vector space $V(x)$, we may identify $V$ with the $n$-tuple of matrices $$\bA = (A_1, A_2, \ldots, A_n)$$ where $A_j \in \M_{\alpha(h(a_j)),\alpha(t(a_j))}$ for all $j$ is the matrix representing $V(a_j)$ with respect to the chosen basis. Choosing a different basis is tantamount to replacing $\bA$ with 
$$g \cdot \bA:= (g_{t(a_1)} A_1 g^{-1}_{h(a_1)},  g_{t(a_2)} A_2 g^{-1}_{h(a_2)}, \ldots,g_{t(a_n)} A_n g^{-1}_{h(a_n)} )$$ where
$$g = (g_{x_1},g_{x_2}, \ldots, g_{x_k}) \in  \GL_{\alpha}(\kk):= \prod_{i=1}^k \GL_{\alpha(x_i)}(\kk)$$ is the $k$-tuple of change of basis matrices where $g_{x_i}$ describes the change of basis on $V(x_i)$. Thus, we have an action of $\GL_{\alpha}(\kk)$ on 
\[\vv_{\alpha}:= \Pi_{i=1}^n \M_{\alpha(h(a_i)),\alpha(t(a_i))}\] and a pair of $n$-tuples of matrices $\bA, \bA' \in \vv_{\alpha}$ represent isomorphic representations of $Q$ if and only if they lie in the same $\GL_{\alpha}(\kk)$-orbit.

We will apply the language of representation theory to $\vv_{\alpha}$; thus, $\bA$ will be called {\it simple}, resp. {\it indecomposable}, resp. {\it semisimple} if it represents an isomorphism class of simple, resp. indecomposable, resp. semisimple representations. These notions have the following interpretation:
let $Y_{\alpha}:= \prod_{i=1}^k \kk^{\alpha(i)}$. Then $\vv_{\alpha}$ acts on $Y_{\alpha}$ via
\[\bA (v_1, \ldots, v_k) = (A_1v_{t(a_1)},A_2v_{t(a_2)}, \ldots, A_nv_{t(a_n)}).\]
It is now easy to see that $\bA$ is simple if and only if $Y_{\alpha}$ does not contain a proper nonzero subspace which is fixed by each $A_i$ (an $\bA$-fixed space). Similarly $\bA$ is indecomposable if and only if $Y_{\alpha}$ does not decompose as a direct sum of proper nonzero $\bA$-fixed spaces. 

Now the following describes the orbits of $\GL_{\alpha}$ on $\vv_{\alpha}$ and their closures (see \cite[Section~9.7]{DerksenWeyman}):

\begin{prop}\label{closures}
Let $\bA, \bA' \in \vv_{\alpha}$, representing $V,V' \in \Rep(Q, \alpha)$. Then
\begin{enumerate}
\item $\GL_{\alpha} \cdot \bA$ is closed if and only if $\bA$ is semisimple.

\item $\overline{\GL_{\alpha} \cdot \bA}$ contains a unique closed orbit, namely $\GL_{\alpha} \cdot \bA_{ss}$. Here $\bA_{ss}$ is any matrix representing $V_{ss}$; we usually take it to be block-diagonal, but even this is defined only up to reordering blocks.

\item  $\overline{\GL_{\alpha} \cdot \bA} \cap \overline{\GL_{\alpha} \cdot \bA'}$ is non-empty if and only if  $\GL_{\alpha} \cdot \bA_{ss} =  \GL_{\alpha} \cdot \bA'_{ss}$.
\end{enumerate}
\end{prop}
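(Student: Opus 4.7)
The plan is to reduce the three statements to the standard GIT dictionary for reductive group actions on affine varieties, and then identify the closed orbits as precisely those of semisimple representations. Throughout, $\GL_\alpha$ is reductive, so two general facts are available for free: every orbit closure contains a unique closed orbit, and two orbit closures meet if and only if they contain the same closed orbit (equivalently, if and only if they are identified by the categorical quotient map $\pi:\vv_\alpha \to \vv_\alpha/\!/\GL_\alpha$). Given these, once part (1) is proven, parts (2) and (3) follow almost immediately.

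For the nontrivial direction of (1), namely that a non-semisimple $\bA$ has non-closed orbit, I would invoke the Hilbert--Mumford criterion. Starting from a composition series $0 = V_0 < V_1 < \cdots < V_r = V$ of the representation $V$ corresponding to $\bA$, I would choose bases of the $V(x_i)$ adapted to this filtration so that every matrix $A_j$ is block upper triangular with diagonal blocks giving a matrix representative $\bA_{ss}$ of $V_{ss}$. Then I would define a cocharacter $\lambda:\Gm \to \GL_\alpha$ that acts on the $i$-th graded piece of the filtration (at each vertex) by the scalar $t^i$. A direct computation shows that conjugation by $\lambda(t)$ scales the strictly upper block-triangular entries of each $A_j$ by positive powers of $t$, while leaving the diagonal blocks fixed. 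Hence $\lim_{t \to 0} \lambda(t)\cdot \bA = \bA_{ss}$, putting $\bA_{ss}$ in $\overline{\GL_\alpha \cdot \bA}$; since $\bA$ is not isomorphic to $\bA_{ss}$ when $V$ is not semisimple, the orbit $\GL_\alpha \cdot \bA$ fails to be closed.

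For the converse direction of (1), the argument I would use is the following endomorphism-ring/stabilizer criterion: an orbit $G\cdot x$ in an affine $G$-variety is closed whenever its stabilizer is reductive and the orbit has maximal dimension among orbits with the same image in the quotient. For a semisimple representation $V \cong \bigoplus_i S_i^{m_i}$, the stabilizer of $\bA$ is $\prod_i \GL_{m_i}(\End_Q(S_i)^\times)$, which is reductive; alternatively, I would cite the Artin--Voigt result that closed orbits correspond bijectively to isomorphism classes of semisimple representations, which is the content Proposition 9.7.\textit{(something)} of Derksen--Weyman. The cleanest avenue here is probably to combine the Luna slice theorem with the fact that a semisimple $\bA$ has reductive stabilizer, allowing a direct verification that $\pi^{-1}(\pi(\bA))$ consists of a single orbit.

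With (1) in hand, (2) is immediate: $\bA_{ss}$ lies in $\overline{\GL_\alpha \cdot \bA}$ by the Hilbert--Mumford argument above, and its orbit is closed by (1), so it is the unique closed orbit in this closure. For (3), the $\Leftarrow$ direction is trivial from (2), while $\Rightarrow$ uses the standard GIT fact that $\pi$ separates disjoint closed $\GL_\alpha$-invariant subsets: if the closures intersect then $\pi(\bA)=\pi(\bA')$, so both closures contain the same unique closed orbit, which by (2) is both $\GL_\alpha\cdot \bA_{ss}$ and $\GL_\alpha \cdot \bA'_{ss}$. The main obstacle is really the forward direction of (1) — showing that semisimplicity forces closure — but this is where I would lean hardest on the reference to \cite{DerksenWeyman}, since the other ingredients are either elementary or formal consequences of reductive GIT.
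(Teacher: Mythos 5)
The paper does not actually prove Proposition~\ref{closures}; it cites it as background, referring the reader to Section~9.7 of Derksen--Weyman, so there is no ``paper proof'' to compare against. Your sketch correctly identifies the two standard ingredients: the Hilbert--Mumford limit along a cocharacter adapted to a composition series (which gives $\bA_{ss}\in\overline{\GL_\alpha\cdot\bA}$ and hence non-closedness of non-semisimple orbits), and the reductive-GIT dictionary of unique closed orbit per orbit closure, from which (2) and (3) follow formally.

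However, there is a genuine error in the criterion you offer for the converse direction of (1). You assert that ``an orbit $G\cdot x$ in an affine $G$-variety is closed whenever its stabilizer is reductive and the orbit has maximal dimension among orbits with the same image in the quotient.'' Reductivity of the stabilizer is \emph{necessary} for closedness (Matsushima: closed orbits are affine, and $G/H$ affine forces $H$ reductive), but it is not sufficient: take $\Gm$ acting on $\C^2$ by $t\cdot(a,b)=(ta,t^{-1}b)$; the orbit of $(1,0)$ has trivial (hence reductive) stabilizer, yet is not closed. Moreover, the unique closed orbit in a fiber of the quotient map lies in the closure of every other orbit in that fiber, so it has \emph{minimal}, not maximal, dimension among them. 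The correct and cleanest route for the forward direction of (1) is the one you already name as a fallback: the Artin--Voigt style bijection between closed $\GL_\alpha$-orbits and isomorphism classes of semisimple representations, which is precisely the content of the cited Derksen--Weyman result. If you want a self-contained argument, one standard proof proceeds by showing that for semisimple $\bA$, any one-parameter degeneration $\lim_{t\to 0}\lambda(t)\cdot\bA$ produces a representation with the same Jordan--H\"older factors, and then uses upper semicontinuity of $\dim\Hom_Q(-,-)$ to conclude the limit is isomorphic to $\bA$; this avoids the faulty stabilizer criterion entirely.
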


Let $W \in \Rep(Q,\alpha), W' \in \Rep(Q, \alpha')$. An {\it extension} of $(W, W')$ is a representation $V \in \Rep(Q,\alpha+\alpha')$ such that $W \leq V$ and $V/W \cong W'$. Equivalently, there exists an exact sequence
\[0 \rightarrow W \rightarrow V \rightarrow W' \rightarrow 0.\]
$V$ is said to be a {\it trivial extension} if $V \cong W \oplus W'$.  A pair $V,V'$ of extensions is equivalent if there is a commutative diagram 

\begin{center}
\begin{tikzpicture}
\node[left] at (0,0){$W$};
\node[right] at (3,0){$W'$};
\node at (1.5,0){$V$};
\node[left] at (0,-2){$W$};
\node at (1.5,-2){$V'$};
\node[right] at (3,-2){$W'$};
\node[left] at (-2,0){$0$};
\draw[->] (-2,0)--(-1,0);
\node[left] at (-2,-2){$0$};
\draw[->] (-2,-2)--(-1,-2);
\draw[->] (0,0)--(1,0);
\draw[->] (2,0) -- (3,0);
\draw[->] (4,0)-- (5,0);
\node[right] at (5,0){$0$};
\draw[->] (4,-2)-- (5,-2);
\node[right] at (5,-2){$0$};
\draw[->] (0,-2)-- (1,-2);
\draw[->] (2,-2)--(3,-2);
\node[above] at (1.5,-2){};
\draw[->] (-0.5,-0.5) -- (-0.5,-1.5);
\node[left] at (-0.5,-1){$\id_W$};
\draw[->] (3.5,-0.5) -- (3.5,-1.5);
\node[right] at (3.5,-1){$\id_{W'}$};
\draw[->] (1.5,-0.5)--(1.5,-1.5);
\node[left] at (1.5,-1){$\phi$};
\end{tikzpicture}
\end{center}
where the rows are exact and $\phi \in \Hom_Q(V,V')$ is an isomorphism. Denote the set of equivalence classes of extensions of $(W,W')$ by $\Ext_Q(W,W')$. This can be given a vector space structure: see \cite[Exercise~2.4.2]{DerksenWeyman}.

Suppose $V$ is an extension of $(W,W')$. Then $V(x) = W(x) \oplus W'(x)$ for all $x \in Q_v$. If we identify $V$ with the $n$-tuple $\bA$, then each $A_i$ has a block decomposition
\[A_i = \begin{pmatrix} B_i & C_i \\ 0 & B_i' \end{pmatrix}\] where $B_i \in \M_{\alpha(h(a_i)), \alpha(t(a_i))}$, $B'_i \in \M_{\alpha'(h(a_i)), \alpha'(t(a_i))}$, 
$C_i \in \M_{\alpha(h(a_i)), \alpha'(t(a_i))}$, and the $n$-tuples $\bB$ and $\bB'$ represent $W$ and $W'$ respectively.

Define a mapping $d_{\bB,\bB'}: \prod_{i=1}^k \M_{\alpha'(i),\alpha(i)} \rightarrow \prod_{i=1}^n \M_{\alpha'(h(a_i)), \alpha(t(a_i))}$ as follows:
\[d_{\bB,\bB'}(\bX)_i = B'_i X_{t(i)} - X_{h(i)}B_i .\]
It is clear that $\ker(d_{\bB,\bB'})$ is identified with the set of homomorphisms from $W$ to $W'$. We also have (see \cite[Section~2.4]{DerksenWeyman}):

\begin{prop}\label{ext}
Let \[\bA = \begin{pmatrix} \bB & \bC \\ 0 & \bB' \end{pmatrix}\] and
\[\bA' = \begin{pmatrix} \bB & \bC' \\ 0 & \bB' \end{pmatrix}\]
The following are equivalent:
\begin{enumerate}
\item $\bA$ and $\bA'$ are equivalent as extensions of $\bB$ by $\bB'$;

\item $\bC-\bC' \in \im(d_{\bB,\bB'})$.
\end{enumerate}
In particular, every extension of $\bB$ by $\bB'$ is trivial if and only if $d_{\bB, \bB'}$ is surjective.
\end{prop}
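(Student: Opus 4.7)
The plan is to translate the abstract diagrammatic equivalence of extensions directly into matrix equations on the off-diagonal blocks, and then read off the stated cohomological criterion. First I would argue that any equivalence $\phi:V\to V'$ must have a specific block form. Since $\phi$ restricts to $\id_W$ on the common subrepresentation $W$ and induces $\id_{W'}$ on the common quotient $W'$, in the decompositions $V(x)=V'(x)=W(x)\oplus W'(x)$ it must take the shape
\[\phi(x)=\begin{pmatrix} I & X_x \\ 0 & I\end{pmatrix}\]
for some matrices $X_x$ of appropriate size. Conversely, any such block unit-triangular map is automatically an isomorphism (its inverse just negates $X_x$) and fits into the equivalence-of-extensions diagram, so the question reduces to deciding when such a $\phi$ is a morphism of quiver representations.

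Next, I would impose the naturality relations $\phi(h(a_i))\,A_i = A_i'\,\phi(t(a_i))$ for each arrow $a_i$, expanding both sides as $2\times 2$ block products. The diagonal blocks of the two sides match automatically because $\bA$ and $\bA'$ share $\bB$ and $\bB'$, and the lower-left blocks are zero on both sides. The only non-trivial condition appears in the upper-right block and, after rearrangement, is exactly the equation $\bC - \bC' = d_{\bB,\bB'}(\bX)$ for $\bX=(X_x)_{x\in Q_v}$. This proves (1)$\Leftrightarrow$(2).

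For the final assertion, every extension of $(W,W')$ is represented by some $\bA$ with upper-right block $\bC$, and is trivial iff it is equivalent to the split extension with $\bC' = 0$. By the equivalence just proved this happens iff $\bC\in\im(d_{\bB,\bB'})$, so every extension of $\bB$ by $\bB'$ is trivial precisely when $d_{\bB,\bB'}$ is surjective. The argument is essentially $2\times 2$ block-matrix bookkeeping and poses no conceptual obstacle; the main pitfall is only to keep track of the shapes of the $X_x$ and of signs when expanding the block products, so that the coboundary produced matches the formula defining $d_{\bB,\bB'}$.
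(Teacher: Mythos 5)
Your argument is the standard textbook proof and is correct in substance; the paper gives no proof of its own, merely pointing to \cite[Section~2.4]{DerksenWeyman}. The reduction to block-unipotent $\phi$ is right: the commuting left square forces the first column of $\phi(x)$ to be $\begin{pmatrix}I\\0\end{pmatrix}$ and the right square forces the second row to be $\begin{pmatrix}0&I\end{pmatrix}$, and conversely any such $\phi$ is invertible (its inverse negates the $X_x$) and fits the diagram automatically.

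One thing to pin down before you call it finished. Since $W$ occupies the first block and $W'$ the second, each $X_x$ is a map $W'(x)\to W(x)$, i.e.\ $X_x\in\M_{\alpha(x),\alpha'(x)}$, and multiplying out $\phi(h(a_i))A_i=A_i'\phi(t(a_i))$ in the upper-right corner gives
\[
C_i - C'_i \;=\; B_i X_{t(a_i)} - X_{h(a_i)}B'_i \;\in\; \M_{\alpha(h(a_i)),\,\alpha'(t(a_i))},
\]
which is the right shape for $C_i-C'_i$. Note that this is \emph{not} the same map as the one the paper displays for $d_{\bB,\bB'}$: there the domain is $\prod_i\M_{\alpha'(i),\alpha(i)}$, the entries are $B'_iX_{t(i)}-X_{h(i)}B_i$, the target consists of matrices of shape $\alpha'(h(a_i))\times\alpha(t(a_i))$ (which does not even match the shape of $C_i-C'_i$ unless $\alpha=\alpha'$), and the kernel is $\Hom_Q(W,W')$ rather than $\Hom_Q(W',W)$. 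This looks like a transposition slip in the paper's definition of $d_{\bB,\bB'}$; the map you derive from the block expansion is the correct coboundary for extensions $0\to W\to V\to W'\to 0$. With that version of $d$, your proof, including the final assertion obtained by taking $\bC'=0$, is complete.
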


\subsection{Invariant theory and separating invariants}

Now consider a more general situation in which a linear algebraic group $G$ defined over an algebraically closed field $\kk$ acts linearly on an affine $\kk$-variety $\vv$. Let $\kv$ denote the algebra of polynomial functions on $\vv$. Then $G$ acts on $\kv$ according to the formula
\begin{equation}\label{action} (g \cdot f) (v) = f(g^{-1}  v).\end{equation} We denote by $\kvg$ the subalgebra of $\kv$ fixed by this action. For $v, w \in \vv$ and any $f \in \kvg$ we have $f(v) = f(w)$ if $v \in G \cdot w$. If $G$ is not finite, then the orbit $G \cdot v$ is not closed in general in the Zariski topology, and for reasons of continuity we must have $f(v)=f(w)$ for all $v,w \in \vv$ with $\overline{G \cdot v} \cap \overline{G \cdot w} \neq \emptyset$.

If $f \in \kvg$ and $f(v) \neq f(w)$ we say that $f$ {\it separates} $v$ and $w$. We say that $v$ and $w$ are {\it separated by invariants} if there exists an invariant separating $v$ and $w$.  In case $G$ is reductive, we have by a result of Mumford that $f(v) = f(w)$ for all $f \in \kvg$ if and only if $\overline{Gv} \cap \overline{Gw} \neq \emptyset$, see  \cite[Corollary~6.1]{Dolgachev}. In particular, the invariants separate the orbits if $G$ is a finite group.

One can therefore distinguish the orbits of a group action whenever one can find an explicit generating set for $\kvg$, This is a difficult problem in general. For this reason, Derksen and Kemper introduced the following in 2002 \cite[Definition~2.3.8]{DerksenKemper}:

\begin{Def}\label{sepdef}
Let $S \subseteq \kvg$. We say $S$ is a {\it separating set} for $\kvg$ if the following holds for all $v, w \in \vv$:
\begin{equation*} s(v) = s(w) \ \text{for all} \ s \in S \Leftrightarrow f(v) = f(w)  \ \text{for all} \ f \in \kvg. \end{equation*}
\end{Def}
Separating sets of invariants have been an area of much recent interest. In general they have some nicer properties and are easier to construct than generating sets. For example, if $G$ is a finite group acting on a vector space $V$, then the set of invariants of degree $\leq |G|$ form a separating set \cite[Theorem~3.9.14]{DerksenKemper}. This is also true for generating invariants if $\chr(\kk) = 0$ \cite{FleischmannNoetherBound,FogartyNoetherBound} but fails for generating invariants in the modular case. Separating sets for the rings of invariants $\kk[V]^{C_p}$, where $\kk$ is a field of characteristic $p$ and $C_p$ the cyclic group of order $p$ and $V$ is indecomposable were constructed in \cite{SezerCyclic}. Corresponding sets of generating invariants are known only when $\dim(V) \leq 10$ \cite{WehlauCyclicViaClassical}. For the (non-reductive) linear algebraic group $\Ga$ of a field of characteristic zero, separating sets for $\kk[V]^{\Ga}$ for arbitary indecomposable linear representations $V$ were constructed in \cite{ElmerKohls}, andt hese results were extended to decomposable representations in \cite{DufresneElmerSezer}. Even for indecomposable representations, generating sets are known only where $\dim(V) \leq 8$ \cite{Bedratyuk7}.  Finally, for an arbitrary (i.e. non-linear) $\Ga$-variety $\vv$, the algebra of invariants $\kk[\vv]^{\Ga}$ may not be finitely generated, but it is known that there must exist a finite separating set \cite{KemperSeparating} and finite separating sets have been constructed for many examples where $\kk[\vv]^{\Ga}$ is infinitely generated \cite{DufresneKohlsFiniteSep, DufresneKohlsSepVar}. 

Let $S$ be a separating set for $\kvg$.. The size of a separating set is bounded below by the dimension of $\kvg$, see \cite[Section~1.2]{ElmerMatrixSepVar}. A separating set whose size equals the dimension of $\kvg$ is sometimes called a polynomial separating set, because it necessarily generates a polynomial subalgebra of $\kvg$. On the other hand, there always exists a separating set of size $\leq 2\dim(\kvg)+1$, albeit such a separating set may necessarily contain non-homogeneous polynomials; see \cite[Theorem~5.3]{KamkeKemper} for a proof.

The {\it separating variety} was introduced by Kemper in \cite{KemperCompRed}:
\begin{Def}\[ \mathcal{S}_{G,\vv} = \{(v, w) \in \vv^2: f(v)= f(w) \ \text{for all} \ f \in \kvg \} .\]
\end{Def}
In other words, the separating variety is the subvariety of $\vv^2$ consisting of pairs of points which are not separated by any invariant. We note that $\mathcal{S}_{G,\vv}$ is preserved by the natural action of $G^2$ on $\vv^2$. If $G$ is reductive then we have the alternative description

\begin{equation}\label{sepvarred} \mathcal{S}_{G,\vv} = \{(v, w) \in \vv^2: \overline{G \cdot v} \cap \overline{G \cdot w} \neq \emptyset\} .
\end{equation}

Since invariants are constant on orbits. the separating variety always contains the {\it graph}
\[\Gamma_{G,\vv} = \{(v, g\cdot v) \in \vv^2: g \in G, v \in \vv\}\]
of the action of $G$ on $\vv$. This is not closed in general, so we have an inclusion $\overline{\Gamma_{G,\vv}} \subseteq \mathcal{S}_{G,\vv}$. Often this is an equality, but not always.

We define $I_{G,\vv}$ to be the ideal of $\kk[\vv^2]$ consisting of the polynomial functions which vanish on $\mathcal{S}_{G,\vv}$. Clearly this is a radical ideal. Then a separating set can be characterised as a subset $S \subseteq \kk[\vv]^G$ which cuts out the separating variety in $\vv^2$, in other words (see \cite[Theorem~2.1]{DufresneSeparating})

\begin{prop}\label{radversion}
$S \subseteq \kk[\vv]^G$ is a separating set if and only if 
\[V_{\vv^2}(\delta(S)) = \mathcal{S}_{G,\vv}.\]
\end{prop}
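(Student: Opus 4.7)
The plan is to unwind the definitions carefully. The map $\delta:\kvg\to\kk[\vv^2]$ is the one sending an invariant $f$ to the polynomial $\delta(f)(v,w)=f(v)-f(w)$. The first step is to record the pointwise reformulation of the right-hand side:
\[
V_{\vv^2}(\delta(S)) \;=\; \{(v,w)\in\vv^2 : s(v)=s(w) \text{ for all } s \in S\}.
\]
Once this is in place, the proposition reduces to comparing two pointwise equivalences, and no geometric input (such as reductivity or Mumford's theorem) is needed.

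For the forward direction, I would assume that $S$ satisfies Definition~\ref{sepdef}. Fix $(v,w)\in\vv^2$. By the displayed identity, $(v,w) \in V_{\vv^2}(\delta(S))$ if and only if $s(v)=s(w)$ for all $s\in S$, and by Definition~\ref{sepdef} this is equivalent to $f(v)=f(w)$ for all $f\in\kvg$, which is exactly the condition defining $\mathcal{S}_{G,\vv}$. Hence the two subsets of $\vv^2$ coincide.

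For the reverse direction, I would assume $V_{\vv^2}(\delta(S))=\mathcal{S}_{G,\vv}$ and verify both implications in Definition~\ref{sepdef}. The implication ``$f(v)=f(w)$ for all $f\in\kvg\Rightarrow s(v)=s(w)$ for all $s\in S$'' is immediate from the containment $S\subseteq\kvg$. For the converse, if $s(v)=s(w)$ for every $s\in S$ then $(v,w)\in V_{\vv^2}(\delta(S))=\mathcal{S}_{G,\vv}$, and the defining property of $\mathcal{S}_{G,\vv}$ yields $f(v)=f(w)$ for every $f\in\kvg$.

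The main obstacle is really the absence of one: the statement is a tautology once $\delta$ is interpreted correctly, and the proof is essentially a translation exercise. The only small subtlety worth flagging is that $V_{\vv^2}(\delta(S))$ denotes the common zero locus of the set $\delta(S)$ itself, but since vanishing sets depend only on the underlying functions (not on the ideal they generate, nor on its radical), this point does not affect the argument.
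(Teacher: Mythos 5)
The paper does not actually prove Proposition~\ref{radversion}; it simply cites \cite[Theorem~2.1]{DufresneSeparating}, so there is no in-paper proof to compare against. Your argument is correct and is the natural one: rewrite $V_{\vv^2}(\delta(S))$ pointwise as $\{(v,w): s(v)=s(w) \text{ for all } s \in S\}$, and then both directions of the equivalence are a mechanical unwinding of Definition~\ref{sepdef} together with the inclusion $S \subseteq \kvg$. One harmless slip worth noting: with $\delta(f) = 1\otimes f - f\otimes 1$ and the identification $\kk[\vv^2] \cong \kk[\vv]\otimes\kk[\vv]$ given by $(g\otimes h)(v,w) = g(v)h(w)$, one actually has $\delta(f)(v,w) = f(w) - f(v)$ rather than $f(v)-f(w)$; since this only changes the sign, the vanishing locus is the same and your argument is unaffected. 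Your closing remark that the statement needs no geometric input (reductivity, Mumford, radicals) is exactly right, and your observation that $V_{\vv^2}(\delta(S))$ depends only on the functions in $\delta(S)$ and not on the ideal they generate is the correct reason the proposition holds at the level of sets rather than ideals.
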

where  $\delta: \kk[\vv] \rightarrow \kk[\vv^2] = \kk[\vv] \otimes \kk[\vv]$ is defined by
\[\delta(f) = 1 \otimes f - f \otimes 1.\] 

Equivalently, via the Nullstellensatz, $S$ is a separating set if and only if
\[\sqrt{(\delta(f): f \in S)} = {I_{G,\vv}}.\]

Consequently the size of a separating set for $\vv$ is bounded below by the minimum number of generators of $I_{G,\vv}$ up to radical, that is, the minimum number of elements generating any ideal whose radical is $I_{G,\vv}$. (This is called the {\it arithmetic rank} $\ara(I)$ of an ideal $I$). This idea has been used to determine lower bounds for the size of separating sets in several cases: see for example \cite{DufresneJeffries,DufresneJeffriesTori,ElmerSemiSepVar,ElmerMatrixSepVar,Reimers}.

Recall that the {\it subdimension} $\sdim(V)$ of an algebraic variety $V$ is the dimension of its smallest irreducible component. It follows from Krull's principal ideal theorem that, for a separating set $S \subseteq \kvg$ we have
 \begin{equation}\label{krullbound} |S| \geq 2 \dim(\vv) - \sdim(\mathcal{S}_{G,\vv}). \end{equation}

\subsection{Invariants and semi-invariants of quivers}

We recommend \cite{DerksenWeyman} as a good source for the representation theory of quivers and its connection with invariant theory. We return to the notation of Subsection 1.1. Now let $G$ denote the subgroup $\SL_{\alpha}(\C):= \Pi_{i=1}^k \SL_{\alpha(x_i)}(\C)$ of $\GL_{\alpha}(\C)$. The algebra $\C[\vv_{\alpha}]^G$ is called the {\it algebra of semi-invariants} associated to $Q$ with dimension vector $\alpha$. A remarkable result linking representation type of quivers and invariant theory is the following (see \cite{SkowronskiWeyman} for proof):

\begin{prop}[Skowronski-Weyman]\label{SW}  Let $Q$ be a quiver. Then the following are equivalent:
\begin{enumerate}
\item $Q$ has finite or tame representation type;

\item $\C[\vv_{\alpha}]^G$ is a polynomial ring or hypersurface for each dimension vector $\alpha$.
\end{enumerate}
\end{prop}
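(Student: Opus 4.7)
The plan is to invoke the Gabriel--Nazarova--Donovan--Freislich classification of representation type, which identifies quivers of finite (respectively tame) representation type as precisely those whose underlying graph is a disjoint union of Dynkin diagrams of types $A_n, D_n, E_6, E_7, E_8$ (respectively extended Dynkin diagrams $\tilde{A}_n, \tilde{D}_n, \tilde{E}_6, \tilde{E}_7, \tilde{E}_8$). The second key ingredient is the description of $\C[\vv_\alpha]^G$ by Schofield semi-invariants (Derksen--Weyman; Schofield--Van den Bergh), which produces a spanning set of semi-invariants indexed by dimension vectors $\beta$ with $\langle \beta, \alpha \rangle = 0$ for the Euler form of $Q$. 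Together these two tools reduce the proof to a classification-style calculation.

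For the implication (1) $\Rightarrow$ (2), I would argue type by type. For a Dynkin quiver, Gabriel's theorem gives finitely many orbits of $\GL_\alpha$ on $\vv_\alpha$, and the (categorical) quotient is a point; a case-by-case analysis in types $A, D, E$ shows that the associated Schofield semi-invariants are algebraically independent, so $\C[\vv_\alpha]^G$ is polynomial. For an extended Dynkin quiver, Kac's theorem together with the tubular structure of the Auslander--Reiten quiver produces a one-parameter family of regular indecomposable representations; this $\mathbb{P}^1$-family contributes exactly one relation between the Schofield semi-invariants of distinct regular dimension vectors, realising $\C[\vv_\alpha]^G$ as a hypersurface. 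The canonical reduction is to the Kronecker quiver via tilting, where the single relation can be made completely explicit.

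For (2) $\Rightarrow$ (1), I would argue by contrapositive. Assume $Q$ is wild. Using reflection functors and castling transformations, one reduces to a short list of ``minimally wild'' quivers; for each such quiver I would exhibit a dimension vector $\alpha$ for which the moduli space of semistable $\alpha$-dimensional representations has dimension at least two. This gives $\Trdeg(\C[\vv_\alpha]^G) \geq 2$, while a count of independent Schofield semi-invariants (using the Euler form and Kac's formula for the dimension of the indecomposable locus) produces enough algebraically independent semi-invariants that the defining ideal of $\spec \C[\vv_\alpha]^G$ in its natural embedding needs strictly more than one generator. Hence the algebra is neither polynomial nor a hypersurface.

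The main obstacle is the tame case of (1) $\Rightarrow$ (2): showing that along a tubular family the Schofield semi-invariants satisfy exactly one relation, and no more, requires delicate Euler-form bookkeeping for each Euclidean type and a careful passage through derived/tilting equivalences to canonical forms. The wild direction is technically easier once the right test dimension vector is chosen, but choosing it uniformly across all minimally wild quivers is where most of the combinatorial work lies.
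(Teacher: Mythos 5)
The paper does not prove this proposition: it is stated as a known theorem and attributed to Skowronski--Weyman, with the reader referred to \cite{SkowronskiWeyman} for the proof. So there is no in-paper argument to compare your sketch against; what you have written is a standalone proposal for a result the paper treats as external input.

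Taken on its own merits, the outline is plausible in broad strokes (classification of representation type plus Schofield/Derksen--Weyman determinantal semi-invariants is indeed the standard toolkit), but it has real gaps. In the tame case of $(1)\Rightarrow(2)$ you assert the conclusion for ``each Euclidean type'' without isolating the actual mechanism, and you need to allow the outcome to be a polynomial ring rather than forcing a hypersurface (for many dimension vectors of a tame quiver the semi-invariant ring is still polynomial); the claim ``exactly one relation'' has to be established dimension vector by dimension vector, and this is where the entire content of the theorem lies. The contrapositive direction $(2)\Rightarrow(1)$ is where the argument genuinely breaks: knowing that a moduli space has dimension $\geq 2$ only tells you $\Trdeg(\C[\vv_\alpha]^G)\geq 2$, and exhibiting many algebraically independent semi-invariants does not, by itself, show the ring is not a polynomial ring or a hypersurface --- algebraic independence puts no upper bound on the number of generators and says nothing about the number of relations. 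To rule out polynomial and hypersurface you must bound the minimal number of generators from below and simultaneously show at least two independent relations (or show the algebra is not a complete intersection of codimension $\leq 1$), and this step is entirely missing. Finally, the spanning result you invoke for Schofield semi-invariants is stated for quivers without oriented cycles; the paper applies Proposition~\ref{SW} to the $n$-loop quiver, which has oriented cycles, so at minimum you would need to explain how trace-of-cycle invariants enter and why the proof survives.
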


Along similar lines, Sato and Kimura \cite{SatoKimura} showed that if $Q$ has finite representation type then $\C[\vv_{\alpha}]^G$ is polynomial for all dimension vectors $\alpha$. Now by \cite[Corollary~4.6]{DufresneKraft}, we have that if an algebraic group $G$ acts on a normal variety $V$, and $\kk$ has characteristic zero, and $\kk[V]^G$ contains a polynomial separating algebra then $\kk[V]^G$ is itself polynomial. So we obtain the following ``separating'' version of Sato and Kimura's result immediately:

\begin{prop}
 Let $Q$ be a quiver. Then the following are equivalent:
\begin{enumerate}
\item $Q$ has finite representation type;

\item $\C[\vv_{\alpha}]^G$ contains a polynomial separating set for each dimension vector $\alpha$.
\end{enumerate}
\end{prop}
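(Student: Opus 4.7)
My plan is to prove the two implications separately, relying almost entirely on cited results. For (1) $\Rightarrow$ (2), I would simply invoke the cited result of Sato and Kimura: if $Q$ has finite representation type, then $\C[\vv_\alpha]^G$ is a polynomial ring in $d = \dim \C[\vv_\alpha]^G$ variables, for each dimension vector $\alpha$. Its algebra generators form a set of cardinality exactly $d$, and of course every generating set of invariants is automatically a separating set. This yields a polynomial separating set of the required size.

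For (2) $\Rightarrow$ (1), I would proceed in two steps. First, since $\vv_\alpha$ is a $\C$-vector space, it is smooth and in particular normal, and $\chr(\C) = 0$; so the hypotheses of \cite[Corollary~4.6]{DufresneKraft} are satisfied. Applying that corollary to the polynomial separating subalgebra furnished by (2), I conclude that $\C[\vv_\alpha]^G$ is itself a polynomial ring, for every dimension vector $\alpha$. Second, I would invoke the converse direction of Sato and Kimura's theorem to conclude that $Q$ has finite representation type. This converse is visible through the Skowronski--Weyman classification (Proposition~\ref{SW}): a tame quiver which is not of finite representation type produces a \emph{hypersurface} ring of semi-invariants for at least one $\alpha$, and a hypersurface ring has a nonzero relation in positive degree, so it is not polynomial. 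Ruling out hypersurface occurrences therefore forces $Q$ to have finite representation type.

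The only real subtlety is that the excerpt cites only one direction of Sato--Kimura explicitly; the reverse implication used above should be cross-referenced carefully, either to the original paper or to Skowronski--Weyman. Apart from this one bookkeeping issue, the proof is a direct concatenation of the cited theorems, which explains the author's description of it as ``immediate.''
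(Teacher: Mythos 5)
Your proposal matches the paper's approach. The paper also stitches together the Sato--Kimura result (finite representation type $\Rightarrow$ polynomial ring of semi-invariants for every $\alpha$), the observation that algebra generators form a separating set, and \cite[Corollary~4.6]{DufresneKraft} for the reverse implication. The paper declares the result ``immediate,'' which is exactly what you obtain by concatenating the citations.

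One point worth sharpening. You are right that the excerpt cites only the forward direction of Sato--Kimura; the paper is tacitly relying on the full biconditional (finite type $\Leftrightarrow$ all $\C[\vv_\alpha]^G$ polynomial), and your bookkeeping instinct to flag this is correct. However, your proposed patch via Skowronski--Weyman (Proposition~\ref{SW}) does not quite close the gap on its own. Skowronski--Weyman says that finite-or-tame type is equivalent to all $\C[\vv_\alpha]^G$ being polynomial or hypersurface; this rules out wild quivers from satisfying condition (2), but it does \emph{not} directly assert that a tame, non-finite quiver must produce a genuine (non-polynomial) hypersurface for some $\alpha$. That is precisely the extra content of the converse direction of Sato--Kimura, which is what your argument would need to invoke; it cannot be read off the statement of Proposition~\ref{SW} alone. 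So the resolution is to cite Sato--Kimura in its full ``if and only if'' form rather than trying to manufacture the converse from Skowronski--Weyman. Aside from that, your normality and characteristic-zero checks for Dufresne--Kraft are correct and complete.
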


 In \cite{ElmerSemiSepVar}, the author made the following conjecture generalising Proposition \ref{SW}:

\begin{conj}\label{conj}  Let $Q$ be a quiver. Then the following are equivalent:
\begin{enumerate}
\item $Q$ has finite or tame representation type;

\item $\C[\vv_{\alpha}]^G$ contains a polynomial or hypersurface separating set, for each dimension vector $\alpha$.
\end{enumerate}
\end{conj}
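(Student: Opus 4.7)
The implication (1) $\Rightarrow$ (2) follows immediately from Proposition \ref{SW}: if $Q$ has finite or tame representation type, then for each $\alpha$ the ring $\C[\vv_\alpha]^G$ is a polynomial ring or hypersurface, so any generating set is itself a separating set of the required size. The content lies in the converse, and since the conjecture remains open in general, my plan is to establish the two cases asserted in the abstract: the one-vertex quiver $L_n$ with $n$ loops (wild iff $n \geq 2$) and the generalised Kronecker quiver $K_n$ with two vertices and $n$ arrows (wild iff $n \geq 3$).

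For $L_n$ with dimension vector $\alpha=(p)$, since the centre of $\GL_p$ acts trivially on $\M_p^n$, the semi-invariant ring equals $\C[\M_p^n]^{\GL_p}$, which has dimension $(n-1)p^2+1$ for $n,p\geq 2$. The strategy is to apply the Krull bound \eqref{krullbound}, which becomes
\[
|S| \;\geq\; 2np^2 - \sdim(\mathcal{S}_{G,\M_p^n}),
\]
and observe that the existence of a polynomial or hypersurface separating set forces $\sdim(\mathcal{S}_{G,\M_p^n}) \geq (n+1)p^2 - p - 1$. So it is enough to exhibit, for every wild $n \geq 2$, some $p$ with $\sdim = (n+1)p^2-p$; combined with the case $p=2,n\geq 4$ from \cite{ElmerMatrixSepVar}, the three claimed subdimension results then cover every wild $n$. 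To compute the subdimension I would (i) invoke Proposition \ref{closures} together with \eqref{sepvarred} to stratify $\mathcal{S}_{G,\M_p^n}$ by the isomorphism class of the common semisimplification $\bA_{ss}$, (ii) use Proposition \ref{ext} to compute each stratum's dimension via $\Hom$- and $\Ext$-spaces between the simple constituents, and (iii) realise the resulting family of numerical data combinatorially as the poset $\mathcal{P}_{p,n}$ before minimising over it.

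For $K_n$ with $n\geq 3$, my plan is to reduce to the one-vertex case by taking $\alpha=(p,p)$. On the dense open subset of $\M_p^n$ where $A_n$ is invertible, the map $(A_1,\ldots,A_n) \mapsto (A_n^{-1}A_1,\ldots,A_n^{-1}A_{n-1})$ is equivariant with respect to the second-factor projection $\SL_p \times \SL_p \to \SL_p$, $(g,h)\mapsto h$, where the target carries the simultaneous conjugation action. Pulling back along this map transfers separating invariants from the $L_{n-1}$ picture at dimension $p$ (wild since $n-1\geq 2$) into the $K_n$ setting, and running the parallel separating-variety analysis on $\vv_{(p,p)}$ should propagate the obstruction through to $K_n$.

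The main obstacle is step (iii) of the $L_n$ analysis: cleanly identifying components of $\mathcal{S}_{G,\M_p^n}$ with maximal elements of a tractable combinatorial poset, then verifying the dimension formula attached to each. A closed orbit is the $\GL_p$-orbit of a semisimple tuple, specified by a multiset of pairs (isomorphism class of simple, multiplicity) whose dimensions sum to $p$; controlling which coincidences among the simple summands are generic within a given stratum, and which forced collisions between strata produce boundary incidences, is what dictates the poset structure $\mathcal{P}_{p,n}$ (and connects it, as the keywords suggest, to Hertzsprung's problem). Minimising the resulting dimension function over this poset to pin down $\sdim$ exactly — rather than merely bounding it — is the delicate combinatorial heart of the argument.
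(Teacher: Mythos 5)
Your (1)$\Rightarrow$(2) direction is exactly the paper's observation, and your plan for the $n$-loop quiver follows the paper's strategy closely: decompose $\mathcal{S}_{G,\M_p^n}$ using the closed-orbit description of Proposition~\ref{closures}, organise the pieces by the combinatorial poset $\mathcal{P}_{p,n}$ of (ordered partition, permutation) pairs, compute dimensions $(n+1)p^2-k$ via parabolic induction, and pin down the subdimension by identifying maximal poset elements (which for $n=2$ is where Hertzsprung's problem enters). Combined with the $p=2$ result from \cite{ElmerMatrixSepVar}, one covers all wild $n$ for the loop quiver.

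The gap is in your Kronecker step. Pulling invariants back along $(A_1,\ldots,A_n)\mapsto(A_n^{-1}A_1,\ldots,A_n^{-1}A_{n-1})$ transports functions \emph{from} $L_{n-1}$ \emph{to} $K_n$, which if anything constructs separating functions for $K_n$ out of ones for $L_{n-1}$ --- an upper bound on separating-set size, not a lower bound. To make the obstruction propagate you need the opposite transport: a way to turn a hypothetical small separating set for $K_n$ into a small separating set for $L_{n-1}$. The paper obtains this (Section~6) by restricting along the slice $\rho(A_1,\ldots,A_{n-1})=(A_1,\ldots,A_{n-1},I)$: Domokos's results (\cite[Proposition~4.1]{DomokosRelative}, \cite[Corollary~6.3]{DomokosSemi}) show the induced map $\rho^*:\C[\M_p^n]^H\to\C[\M_p^{n-1}]^G$ is surjective and carries separating sets to separating sets, so $|S|\geq|\rho^*(S)|\geq(n-2)p^2+p$ by Theorem~\ref{mainpxp}. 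You also omit the dimension count $\dim\C[\M_p^n]^H=(n-2)p^2+2$ (Kac), which is what makes the lower bound $(n-2)p^2+p$ exceed the hypersurface threshold $(n-2)p^2+3$ precisely when $p\geq 4$. Your fallback of running a parallel separating-variety analysis on $\vv_{(p,p)}$ would amount to redoing the entire poset computation for the Kronecker quiver, which the paper deliberately avoids by the restriction argument.
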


Here a hypersurface separating set is a separating set generating a hypersurface, i.e. with cardinality $\dim(\C[\vv_{\alpha}]^G)+1$.
Since generating sets are separating sets, the forward direction is known. To prove the conjecture, it remains to show that if $Q$ has wild representation type, then there exists a dimension vector $\alpha$ for which $|S| > \dim(\C[\vv_{\alpha}]^G+1)$ for every separating set $S \subseteq \C[\vv_{\alpha}]^G$.

\begin{eg} Representations of the $n$-Kronecker quiver $Q_n$ with diagram

\begin{center}
\begin{tikzpicture}
\draw[fill] (-1,0) circle[radius=0.1];
\draw[fill] (1,0) circle[radius=0.1];
\draw[->] (-0.7,0.3) -- (0.7,0.3);
\node at (0,0.1){$\vdots$};
\draw[->] (-0.7,-0.3) -- (0.7,-0.3);
\end{tikzpicture}
\end{center}

with $n$ arrows and dimension vector $(q,p)$ over $\C$ can be identified with $n$-tuples of $p \times q$ matrices, i.e. elements of $\M_{p,q}^n(\C)$, and two such are isomorphic if they lie in the same $\GL_{p}(\C) \times \GL_q(\C)$ orbit, where the action is by simultaneous left- and right- multiplication. The quiver is of finite type for $n=1$, tame for $n=2$, and wild for $n \geq 3$. Let $G = \SL_p(\C) \times \SL_q(\C)$. \cite[Theorem~1.16]{ElmerSemiSepVar}  shows that the algebra of invariants $\C[\M_{2,2}^n]^G$ does not contain a hypersurface separating algebra if $n\geq 5$, while $\C[\M_{2,2}^n]^G$ is known to be a polynomial algebra for $n \leq 3$, and a hypersurface for $n=4$. To verify the conjecture for this quiver it remains to show that, for $n=3$ and for $n=4$, there exists $p,q$ such that $\C[\M_{p,q}^n]^G$ does not contain a polynomial or hypersurface separating set. 
\end{eg}

\begin{eg}\label{nloop} Representations of the $n$-loop quiver $Q_n$

\begin{center}
\begin{tikzpicture}
\draw[fill] (0,0) circle[radius=0.1];

\draw[->] (0.3,0) arc (-60:240:0.6);
\node at (0,1.4) {$n$};
\end{tikzpicture}
\end{center}
where the $n$ represents $n$ separate arrows and $\alpha = p$ can be identified with $n$-tuples of $p \times p$ matrices, i.e. elements of $\M_p^n$, with a pair of such representations isomorphic if and only if those $n$-tuples of matrices are simultaneously conjugate under the action of $\GL_{p}(\C)$. Let $G = \SL_p(\C)$ act on $\M_p(\C)$ by simultaneous conjugation. The semi-invariant rings of such actions were studied in \cite{ElmerMatrixSepVar}. The quiver $Q_n$ is of tame type if $n=1$ and wild otherwise. Theorem 1.4 in loc. cit. shows that $\C[\M_{2}^n]^G$ does not contain a hypersurface separating set for $n \geq 4$, and $\C[\vv_{2}^n]^G$ is known to be polynomial if $n=2$ and a hypersurface if $n=3$. Thus to verify the conjecture for this quiver  it remains to show that, for $n=2$ or $n=3$ there exists some $p$ such that $\C[\M_{p}^n]^G$ does not contain a polynomial or hypersurface separating set.
\end{eg}

\subsection{Matrix invariants and main results}

In this subsection we consider only the $n$-loop quiver $Q_n$ described in Example \ref{nloop}, and its representations over $\C$. In that case we have $\vv = \vv_{\alpha} = \M^n_p(\C)$ for some $p>0$, and we note that the scalar multiples of the identity matrix in $\GL_p(\C)$ act trivially, so that $\C[\vv]^{\SL_p(\C)} = \C[\vv]^{\GL_p(\C)}$. We set $G = \GL_p(\C)$ throughout.

The elements of $\vv$ wil be viewed as $n$-tuples $\bA = (A_1,A_2, \ldots, A_n)$, or as $p \times p$ matrices with entries in $\C^n$. We call these $n$-matrices for short. For $g \in G$ we write the conjugation action as
\[g \cdot \bA:= (gA_1g^{-1}, \ldots, gA_ng^{-1}).\] 

For $1 \leq i,j \leq p$ and $1 \leq k \leq n$, let $a_{ij}^{(k)}$ denote the $i,j$th entry of $A_k$, and write $\ba_{ij} \in \C^n$ for the vector $(a^{(1)}_{ij}, \ldots, a^{(n)}_{ij})$.
If we need to discuss a second element $\bA' \in \vv$ we shall write $\bA' = (A'_1,A'_2, \ldots, A'_n)$ with the $i,j$th element of $A'_k$ being ${a'}^{(k)}_{ij}$ and $\ba'_{ij} = ({a'}^{(1)}_{ij}, \ldots, {a'}^{(n)}_{ij})$.

Now for $1 \leq i,j \leq p$ and $1 \leq k \leq n$, let $x_{ij}^{(k)}$ denote the linear functional $\vv \rightarrow \C$ which picks out the $i,j$th entry of of $A_k$, and introduce generic matrices
\[X_k:= \begin{pmatrix} x_{11}^{(k)} & x_{12}^{(k)} \ldots & x_{1p}^{(k)} \\ x_{21}^{(k)} & x_{22}^{(k)} \ldots & x_{2p}^{(k)} \\
\vdots & \vdots & \vdots \\ x_{p1}^{(k)} & x_{p2}^{(k)} \ldots & x_{pp}^{(k)} 
 \end{pmatrix}.\]
Then we have \[\C[\vv] = \C[x_{ij}^{(k)}: i,j = 1, \ldots, p, k = 1, \ldots, n].\]

Kac \cite[Proposition~4]{KacQuivers} discovered a method of computing the Krull dimension of the algebra of semi-invariants of an arbitrary quiver. Applying his method to the $n$-loop quiver we find:

\begin{prop}\label{dimcvg}
The algebra of invariants $\C[\vv]^G$ has dimension $(n-1)p^2+1$ for $n \geq 2$ and $p>1$. 
\end{prop}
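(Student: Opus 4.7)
The plan is to apply the standard orbit-dimension formula for invariant rings of reductive groups. Since $G = \GL_p(\C)$ is reductive and $\vv$ is an affine space, we have
$$\dim \C[\vv]^G \;=\; \dim \vv - \max_{\bA \in \vv} \dim(G \cdot \bA) \;=\; np^2 - p^2 + \min_{\bA \in \vv} \dim G_{\bA},$$
so the entire computation reduces to finding the minimum stabilizer dimension of an $n$-matrix under the conjugation action.

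For the lower bound on stabilizers, I would note that the center $Z(G) = \{\lambda I_p : \lambda \in \C^*\}$ acts trivially on $\vv$ by conjugation, so $Z(G) \subseteq G_{\bA}$ for every $\bA$, giving the universal inequality $\dim G_{\bA} \geq \dim Z(G) = 1$.

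To show this bound is attained I would exhibit a single $n$-tuple $\bA$ with $G_{\bA} = Z(G)$. Since $n \geq 2$, take $A_1 = \diag(1, 2, \ldots, p)$, let $A_2$ be the $p \times p$ matrix with every entry equal to $1$, and set $A_3 = \cdots = A_n = 0$. Any $g \in G_{\bA}$ must commute with $A_1$, and since $A_1$ has distinct eigenvalues its centralizer consists of the invertible diagonal matrices; writing $g = \diag(d_1, \ldots, d_p)$, the condition $gA_2 = A_2 g$ forces $d_i = d_j$ for all $i,j$, so $g \in Z(G)$. This uses $p > 1$ (otherwise $A_1$ is trivially a scalar and the argument collapses).

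Combining these two steps gives $\min_{\bA} \dim G_{\bA} = 1$ and therefore $\dim \C[\vv]^G = (n-1)p^2 + 1$. The argument is essentially the specialization of Kac's general formula to this quiver, so the only real work is the explicit stabilizer calculation; the rest is standard invariant theory. The main obstacle, such as it is, is simply choosing $A_2$ rigid enough relative to $A_1$ to collapse the torus-sized centralizer of $A_1$ down to the scalars.
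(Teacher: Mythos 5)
Your route---computing the generic orbit dimension directly---is a legitimate alternative to the paper's argument, which simply specialises Kac's formula for the Krull dimension of quiver semi-invariant rings. Your explicit stabilizer computation is also correct: $A_1=\diag(1,\ldots,p)$ has distinct eigenvalues, so its centraliser in $\GL_p$ is the diagonal torus, and conjugation-invariance of the all-ones matrix $A_2$ collapses that torus to the scalars. The gap is in the opening line. The equality $\dim \C[\vv]^G = \dim\vv - \max_{\bA}\dim(G\cdot\bA)$ is \emph{not} a universal fact for a reductive group acting on an affine space. Take $\Gm$ acting on $\C^2$ by $t\cdot(x,y)=(tx,ty)$: the generic orbit is $1$-dimensional, yet $\C[x,y]^{\Gm}=\C$ has dimension $0$, not $1$. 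In general the quotient $\pi\colon\vv\to\vv/\!/G$ has $\dim\vv/\!/G = \dim\vv - \dim(\text{generic fiber of }\pi)$, and a fiber is a union of orbits all containing the unique closed orbit in their closures; its dimension equals the maximal orbit dimension precisely when the generic orbit is itself closed. So reducing "the entire computation" to the minimum stabilizer dimension is not yet justified.

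What rescues the argument is a property of your chosen $\bA$ that you use implicitly but never state: it is \emph{simple}. The $A_1$-invariant subspaces are exactly the coordinate subspaces (distinct eigenvalues), and $A_2 e_j = (1,\ldots,1)^T$ lies in no proper coordinate subspace, so $\bA$ admits no common proper nonzero invariant subspace. By Proposition~\ref{closures}(1) the orbit $G\cdot\bA$ is therefore closed; a closed orbit of maximal dimension is the entire fiber of $\pi$ through it, whence $\dim\vv/\!/G = \dim\vv - \dim(G\cdot\bA) = np^2-(p^2-1)=(n-1)p^2+1$. So your construction does all the work, but you must add the observation that $\bA$ is simple (equivalently that its orbit is closed) before the dimension formula you invoke is available; the stabilizer calculation alone does not license it.
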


For a finite sequence $I = (i_1,\ldots, i_r)$ it is easy to see that 
\[t_{i_1,i_2,\ldots, i_r}:= \Tr(X_{i_1}X_{i_2}\cdots X_{i_r}) \in \C[\vv]^G.\]
Sibirskii \cite{Sibirskii} showed that these invariants generate $\C[\vv]^G$ as a vector space. LeBruyn and Procesi \cite{LeBruynProcesi} showed that for $p=2$, $\C[\vv]^G$ is minimally generated as an algebra by 
$$\{t_i: 1 \leq i \leq n, t_{ij}: 1 \leq i \leq j \leq n, t_{ijk}: 1 \leq i<j<k \leq n\},$$ a generating set with size $\frac16(n^3+11n)$. In particular, $\C[\vv]^G$ is not polynomial for $n \geq 3$ and not a hypersurface for $n \geq 4$. Kaygorodov et al \cite{Lopatin2x2} showed that this set is a minimal separating set by inclusion, but that does not necessarily mean that it has the smallest possible cardinality among all separating sets. The following was shown in \cite{ElmerMatrixSepVar}:

\begin{prop}
Let $S \subseteq \C[\M^n_2]^G$ be a separating set. Then $|S| \geq 5n-5$.
\end{prop}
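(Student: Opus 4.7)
The plan is to sharpen the Krull-type inequality \eqref{krullbound} by analysing the connectedness dimension of $\mathcal{S}_{G,\vv}$ via a Grothendieck-style bound. With $\dim(\vv)=4n$ and $\dim(\C[\vv]^{G})=4n-3$ (Proposition \ref{dimcvg}), the target $|S|\geq 5n-5$ reduces to exhibiting two distinct irreducible components of $\mathcal{S}_{G,\vv}$ whose intersection has dimension at most $3n+4$; removing this intersection then disconnects $\mathcal{S}_{G,\vv}$, and a connectedness-in-codimension argument delivers the bound.

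The first component I take is the graph closure $\overline{\Gamma_{G,\vv}}$, of dimension $2\dim(\vv)-\dim(\C[\vv]^{G})=4n+3$. The second is the subvariety $Z$ of pairs $(\bA,\bA')$ with matching decomposable semisimplification $\diag(c,d)$ (for some $c\neq d\in\C^{n}$), each of which is a non-trivial extension of the same type (as an extension of $d$ by $c$, say) and which share a common $1$-dimensional invariant subspace. By Propositions \ref{closures} and \ref{ext}, $Z$ is parametrised by the common invariant line $L\in\mathbb{P}^{1}$, the eigenvalues $c,d\in\C^{n}$, and extension classes $v,v'\in\C^{n}$ modulo a diagonal Borel action, giving $\dim Z=4n+1$. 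Generic pairs in $Z$ are non-conjugate (distinct extension classes), so $Z$ is genuinely distinct from $\overline{\Gamma_{G,\vv}}$.

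The key computation is $\dim(\overline{\Gamma_{G,\vv}}\cap Z)$. A point of this intersection is a pair $(\bA,g\bA g^{-1})$ in which $g$ preserves the unique invariant line $L$ of a non-semisimple $\bA$, so $g$ lies in the Borel subgroup $B_{L}$ of dimension $3$. The set of non-semisimple $\bA$ in $\vv$ has dimension $3n+1$ (one for $L\in\mathbb{P}^{1}$, $2n$ for the eigenvalues $c,d\in\C^{n}$ on $L$ and $V/L$, and $n$ for the non-trivial extension class $v\in\C^{n}$); combining with $\dim B_{L}=3$ and quotienting by the $1$-dimensional scalar stabiliser of $\bA$ yields $\dim(\overline{\Gamma_{G,\vv}}\cap Z)=3n+3\leq 3n+4$, as required. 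The hardest part will be confirming that no further irreducible component of $\mathcal{S}_{G,\vv}$---arising for instance from pairs with opposite extension direction, pairs without a common invariant line, or degenerations to scalar semisimplification $cI$---provides a higher-dimensional ``bridge'' between $\overline{\Gamma_{G,\vv}}$ and $Z$; this demands a systematic case analysis of $2$-matrices via Propositions \ref{closures} and \ref{ext}.
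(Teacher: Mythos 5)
Your overall strategy is the right one: the paper's statement is in fact quoted from \cite{ElmerMatrixSepVar}, and that proof proceeds exactly by combining the graded Grothendieck connectedness bound $|S|\geq 2\dim(\vv)-1-\cdim(\mathcal{S}_{G,\vv})$ with a computation of the connectivity dimension of $\mathcal{S}_{G,\vv}$. With $\dim(\vv)=4n$, the target $5n-5$ comes out precisely when $\cdim(\mathcal{S}_{G,\vv})=3n+4$, which the paper records (Example \ref{decomp2}) as the dimension of the intersection of the two irreducible components $\overline{G^2\cdot\cc_{(2),\id}}$ and $G^2\cdot\cc_{(1,1),\id}$.

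However, your execution has a genuine error that propagates through the whole argument. Your set $Z$ insists that $\bA$ and $\bA'$ share a \emph{common} invariant line $L$. This is one closed condition too many: the actual second component $G^2\cdot\cc_{(1,1),\id}$ consists of pairs with the same semisimplification, where $\bA$ and $\bA'$ may be upper-triangularised in \emph{different} bases (it is the image of $\cc_{(1,1),\id}$ under the action of $G\times G$, with two independent group elements, not the diagonal $G$). Your $Z$ is therefore a codimension-one subvariety of that component (your own count $1+2n+2n=4n+1$ versus the correct component dimension $(n+1)p^2-2=4n+2$), so it is not an irreducible component of $\mathcal{S}_{G,\vv}$ at all. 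Consequently, removing $\overline{\Gamma_{G,\vv}}\cap Z$ does \emph{not} disconnect $\mathcal{S}_{G,\vv}$, and the connectedness-in-codimension bound cannot be applied as you claim.

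The same error skews the intersection computation. For a point $(\bA,g\cdot\bA)\in\Gamma_{G,\vv}\cap G^2\cdot\cc_{(1,1),\id}$ there is no reason to restrict $g$ to the Borel $B_L$ stabilising the invariant line of $\bA$; any $g\in G$ works, because $g\cdot\bA$ is still non-semisimple with the same diagonal blocks, just with respect to the \emph{other} line $gL$. Replacing $\dim B_L=3$ by $\dim G=4$ gives $3n+1+4-1=3n+4$, the figure the paper records, and exactly the value needed to recover $5n-5$ (your $3n+3$ would yield $5n-4$, which is too strong). Finally, the ``hardest part'' you defer---ruling out extra bridging components---is not optional: the bound via connectivity dimension is only as good as your control over \emph{all} components, and the paper's Theorem \ref{sepvardecomp} and Example \ref{decomp2} are doing that work (for $n\geq 3$, $p=2$ there are exactly two components). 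Without it, your proposal does not establish the claimed inequality.
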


This shows that the given set of generators is a separating set of minimum cardinality if $n=3$ but may not be so if $n \geq 4$. It also shows that there is no polynomial or hypersurface separating set for $n \geq 4$, which verifies conjecture \ref{conj} for $Q_n$ when $n \geq 4$.

In the case $p=3$, $n \geq 2$, Abeasis and Pittaluga \cite{AbeasisPittaluga} gave a minimal generating set of $\C[\vv]^G$ with cardinality 
\[3n+5\binom{n}{2}+24\binom{n}{3}+51\binom{n}{4}+47\binom{n}{5}+15 \binom{n}{6}\] where $\binom{n}{k}=0$ if $k>n$.
In particular their result shows that $\C[\vv]^G$ is a hypersurface (but not polynomial) for $n = 2$ and not a hypersurface for $n \geq 3$. See also \cite{Lopatin3x3}. It is not known whether this set is a minimal separating set in general. In the case $p=4$ and $n=2$, Drensky and Sadikova \cite{DrenskySadikova} found a minimal set of bihomogeneous polynomials which generate $\C[\vv]^G$. It is not known whether this set is a minimal generating or separating set.

Our most general results on separating sets are as follows:

\begin{Theorem}\label{mainpxp}
Let $n \geq 3$ and $p \geq 2$, or $n=2$ and $p \geq 4$ and  and let $S \subseteq \C[\M_p^n]^G$ be a separating set. Then $|S| \geq (n-1)p^2+p$.
\end{Theorem}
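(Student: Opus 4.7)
My plan is to combine the Krull-type bound \eqref{krullbound} with the computation of the subdimension of $\mathcal{S}_{G,\vv}$ that is established elsewhere in the paper. Since $\dim(\vv) = np^2$, inequality \eqref{krullbound} reads
\[ |S| \geq 2np^2 - \sdim(\mathcal{S}_{G,\vv}). \]
The main result of the paper, announced in the abstract, evaluates the subdimension as exactly $(n+1)p^2 - p$ under the stated hypotheses. Substituting yields
\[ |S| \geq 2np^2 - ((n+1)p^2 - p) = (n-1)p^2 + p, \]
which is the desired bound. Hence the theorem reduces at once to the subdimension claim.

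The substantive part is therefore the subdimension bound $\sdim(\mathcal{S}_{G,\vv}) \leq (n+1)p^2 - p$. The strategy is to enumerate the irreducible components of $\mathcal{S}_{G,\vv}$ via the combinatorial poset $\mathcal{P}_{p,n}$ introduced in the paper, whose maximal elements correspond to these components. Each component arises as the closure in $\vv^2$ of a locally closed set
\[ \pi^{-1}(\mathcal{O}_\tau) \times_{\mathcal{O}_\tau} \pi^{-1}(\mathcal{O}_\tau), \]
where $\pi:\vv \to \vv/\!\!/G$ is the GIT quotient and $\mathcal{O}_\tau \subseteq \vv/\!\!/G$ is the locally closed stratum of closed orbits of a particular semisimple type $\tau$ (using Proposition~\ref{closures}(3) to identify $\mathcal{S}_{G,\vv}$ with the fiber product $\vv \times_{\vv/\!\!/G} \vv$). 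Its dimension equals $\dim(\mathcal{O}_\tau) + 2\dim(F_\tau)$, where $F_\tau$ is the generic fiber of $\pi$ over $\mathcal{O}_\tau$; Luna's slice theorem gives $\dim(F_\tau) = \dim(G \cdot \bA_{ss}) + \dim(\mathcal{N}_{W_\tau})$ in terms of a Luna slice $W_\tau$ at a representative closed orbit $G \cdot \bA_{ss}$ of type $\tau$ and its nullcone $\mathcal{N}_{W_\tau}$ under the stabilizer $G_{\bA_{ss}}$.

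The main obstacle is identifying which maximal element of $\mathcal{P}_{p,n}$ realizes the minimum component dimension and verifying that this minimum equals $(n+1)p^2 - p$. The excluded low-rank cases ($p=2, n \leq 3$ and $p=3, n=2$) are precisely those in which $\C[\vv]^G$ is itself a polynomial ring or hypersurface by the results of LeBruyn--Procesi and Abeasis--Pittaluga cited above, so the bound could not hold there; outside these cases the minimal component should correspond to a "nearly generic" decomposition type (with constrained extension data), whose dimension count---via the Luna formula above---yields the target value and from which no further degeneration produces a smaller maximal component.
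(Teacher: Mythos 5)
Your reduction of the bound $|S| \geq (n-1)p^2 + p$ to the subdimension claim $\sdim(\mathcal{S}_{G,\vv}) = (n+1)p^2 - p$, via the Krull bound \eqref{krullbound} and $\dim(\vv) = np^2$, is exactly the paper's proof of Theorem~\ref{mainpxp}; the paper cites Theorem~\ref{mainsepvar} for the subdimension and stops there.

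Your sketch of the subdimension claim, however, takes a genuinely different route from the paper and is incomplete. The paper decomposes $\mathcal{S}_{G,\vv}$ as $\bigcup \overline{G^2 \cdot \cc_{\pi,\sigma}}$ indexed by pairs of an ordered composition $\pi$ of $p$ and a permutation $\sigma \in S_{|\pi|}$, computes $\dim \overline{G^2 \cdot \cc_{\pi,\sigma}} = (n+1)p^2 - |\pi|$ by an explicit parabolic-induction argument (Lemma~\ref{harm}, Corollary~\ref{speculation}, Corollary~\ref{dimG2Cpisigma}), and then characterises maximality of $(\pi,\sigma)$ combinatorially (Proposition~\ref{nonmax} for $n \geq 3$ and its $n=2$ analogue), reducing the subdimension to a Hertzsprung-type enumeration of permutations. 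You instead propose to view $\mathcal{S}_{G,\vv}$ as the fiber product $\vv \times_{\vg} \vv$, stratify the quotient by Luna strata, and compute $\dim(F_\tau) = \dim(G \cdot \bA_{ss}) + \dim(\mathcal{N}_{W_\tau})$ via the slice theorem. This is a legitimate alternative viewpoint that would in principle yield the same numbers, but you have not actually carried out the computation: you assert that the minimal component ``should correspond to a nearly generic decomposition type'' whose dimension count ``yields the target value and from which no further degeneration produces a smaller maximal component,'' and both assertions are left unproved. Concretely, to complete this you would still need to (a) identify the Luna strata and their dimensions (this is where the combinatorics of $\Pi(p)$ re-enter under a different guise), (b) compute the nullcone dimension of each slice representation (this is where $n$ enters and where $n=2$ and $n\geq 3$ diverge, as seen in Proposition~\ref{newrkcondition}), and (c) determine which strata give maximal components and minimise their dimensions. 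These are precisely the substantive steps the paper carries out in Sections 2--5, and your proposal defers all of them. Also, your observation that the excluded cases ($p=2$, or $p=3$ with $n=2$) would contradict known hypersurface or polynomial descriptions of $\C[\vv]^G$ is a useful sanity check that the bound fails there, but it is not an argument that the subdimension is smaller in those cases --- that requires separate work (cf.\ Examples~\ref{decomp22} and~\ref{decomp32}).
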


In order to prove these results, we will determine the subdimension of the separating variety $\mathcal{S}_{G,\M_p^n}$ for all $p \geq 2$ and $n \geq 2$ in the next section. Our main result is as follows:

\begin{Theorem}\label{mainsepvar}
Let $n \geq 3$ and $p \geq 2$, or $n=2$ and $p \geq 4$. Then $\dim(\mathcal{S}_{G,\M_p^n}) = (n+1)p^2-1$ and  $\sdim(\mathcal{S}_{G,\M_p^n}) = (n+1)p^2-p$.
\end{Theorem}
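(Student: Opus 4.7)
The plan is to prove both parts by exhibiting explicit subvarieties of $\mathcal{S}_{G,\vv}$ that realise the asserted dimensions, and then arguing that they are extremal.

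\textbf{Dimension.} Since the generic stabilizer of $G=\GL_p(\C)$ acting by simultaneous conjugation on $\vv=\M_p^n$ is just the centre $\Gm\cdot I_p\subseteq G$ (for $n\geq 2$ and $p\geq 2$), the graph closure $\overline{\Gamma_{G,\vv}}$ is an irreducible subvariety of $\mathcal{S}_{G,\vv}$ of dimension
\[
\dim\overline{\Gamma_{G,\vv}}=\dim\vv+\dim G-1=np^2+p^2-1=(n+1)p^2-1.
\]
To see that no other component of $\mathcal{S}_{G,\vv}$ is larger, I would stratify $\vv/\!\!/G$ by the type $\tau=(m_i,d_i)_{i=1}^r$ of the unique closed orbit over each point. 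Over each stratum $Y_\tau$ the part of $\mathcal{S}_{G,\vv}$ lying above is the fibre product $\pi^{-1}(Y_\tau)\times_{Y_\tau}\pi^{-1}(Y_\tau)$ of dimension $\dim Y_\tau+2\dim F_\tau$, where $F_\tau$ is the fibre over a generic point. For the generic stratum $\tau=(1,p)$ this equals $(n+1)p^2-1$ (realised by the graph); for deeper strata one combines Luna's slice theorem with analysis of the local quiver at a semisimple $\bB$ to show that the sum never exceeds $(n+1)p^2-1$.

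\textbf{Subdimension.} I construct an irreducible component of dimension $(n+1)p^2-p$ explicitly. Fix permutations $\sigma,\sigma'\in S_p$ and set
\[
\mathcal{C}_{\sigma,\sigma'}=\overline{\bigl\{(\bA,\bA')\in\vv^2\;:\;\bA_{ss}=\bA'_{ss}=\bigoplus_{i=1}^p \bB_i,\ \bA,\bA'\text{ fully non-split of filtration types }\sigma,\sigma'\bigr\}},
\]
where $\bB_1,\ldots,\bB_p\in\C^n$ are pairwise distinct $1$-dimensional simples. A direct parametrization gives $\dim\mathcal{C}_{\sigma,\sigma'}=(n+1)p^2-p$: the distinct $p$-tuple $(\bB_1,\ldots,\bB_p)\in(\C^n)^p$ contributes $pn$ parameters, and for each fixed ordered diagonal the $G$-saturation of the upper-triangular $n$-matrices with that diagonal has dimension $\binom{p}{2}(n+1)$ (the $n\binom{p}{2}$ strictly upper-triangular entries together with the $\binom{p}{2}=\dim G/B$ dimensions for varying the flag), so
\[
\dim\mathcal{C}_{\sigma,\sigma'}=pn+2\binom{p}{2}(n+1)=p^2n+p(p-1)=(n+1)p^2-p.
\]
Irreducibility is clear from the parametrization. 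Taking $\sigma\neq\sigma'$, a generic $(\bA,\bA')\in\mathcal{C}_{\sigma,\sigma'}$ has $\bA,\bA'$ in distinct $G$-orbits with incomparable filtrations, so $\bA'\notin\overline{G\cdot\bA}$; hence $(\bA,\bA')\notin\overline{\Gamma_{G,\vv}}$, which shows that $\mathcal{C}_{\sigma,\sigma'}$ is not contained in the graph closure and therefore is an irreducible component.

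\textbf{Main obstacle.} The harder half is the matching lower bound $\sdim\mathcal{S}_{G,\vv}\geq(n+1)p^2-p$ under the numerical hypotheses of the theorem. For this one needs to enumerate all irreducible components of $\mathcal{S}_{G,\vv}$ via the maximal elements of the combinatorial poset $\mathcal{P}_{p,n}$ introduced earlier, and verify the inequality $\dim\geq(n+1)p^2-p$ for each. For components associated to a type $\tau$ with some $m_i>1$ or some $d_i>1$, Luna's slice theorem reduces the dimension bound to a computation on the local quiver at a semisimple $\bB$ of type $\tau$, where one must estimate the dimensions of the associated null cones. The restriction $n\geq 3,\,p\geq 2$ or $n=2,\,p\geq 4$ is exactly the range in which no component of smaller dimension can arise from such a degenerate combinatorial type, and handling the boundary cases where the dichotomy barely fails is where I expect the delicate case analysis to lie.
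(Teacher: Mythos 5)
Your overall strategy (exhibit the graph closure for the top dimension, exhibit a rank-$p$ stratum for the bottom dimension, then argue extremality) is in the same spirit as the paper, but the two critical steps are not actually justified and one of them rests on a criterion that is wrong.

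\textbf{The maximum-dimension claim is only sketched.} The paper does not need Luna's slice theorem or a stratification of $\vv/\!\!/G$ for this: from Proposition \ref{closures} it writes $\mathcal{S}_{G,\vv}$ as the explicit \emph{finite} union \eqref{fundamentaldecomp} of closed irreducible sets $\overline{G^2\cdot\cc_{\pi,\sigma}}$, computes $\dim\overline{G^2\cdot\cc_{\pi,\sigma}}=(n+1)p^2-|\pi|$ directly (Corollary \ref{dimG2Cpisigma}), and reads off that the maximum is achieved at $|\pi|=1$, i.e.\ the graph. You correctly compute the graph's dimension, but your argument that nothing else is larger is a plan, not a proof.

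\textbf{The membership criterion for your $\mathcal{C}_{\sigma,\sigma'}$ to be a component is incorrect.} Your set $\mathcal{C}_{\sigma,\sigma'}$ is (up to reparametrising) the paper's $\overline{G^2\cdot\cc_{(1^p),\tau}}$ with effective permutation $\tau$ determined by $\sigma,\sigma'$, and your dimension count $(n+1)p^2-p$ matches Corollary \ref{dimG2Cpisigma}. However, the claim that $\sigma\neq\sigma'$ suffices for this to be an irreducible component fails on two counts. First, ``not contained in $\overline{\Gamma_{G,\vv}}$'' does \emph{not} imply ``is a component'': the separating variety has many other components of intermediate rank $1<k<p$, and $\cc_{(1^p),\tau}$ can easily sit inside one of them. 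Second, the actual criterion (Proposition \ref{nonmax} for $n\geq 3$, and the Hertzsprung criterion in Section 5 for $n=2$) is combinatorial in $\tau$, not just the condition $\tau\neq\id$. For example $\tau=(12)$, which certainly gives $\sigma\neq\sigma'$, is a \emph{partial reversal}, so $\cc_{(1^p),(12)}$ is contained in $\overline{G^2\cdot\cc_{(2,1^{p-2}),\id}}$ and is \emph{not} a component for any $n\geq 3$. Conversely, $\tau=\id$ (your excluded case) \emph{is} a component when $n\geq 3$. For $n=2$ the condition is stronger still (no substring $[l,l+1]$ or $[l+1,l]$), and such permutations only exist for $p\geq 4$ --- this is precisely why the theorem's hypotheses exclude $(n,p)=(2,2)$ and $(2,3)$. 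Your proposal does not engage with this distinction, which is the heart of the proof.

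\textbf{What's missing.} You flag in your final paragraph that the real work is enumerating maximal elements of $\mathcal{P}_{p,n}$, but that is not a ``delicate boundary case'' to be dispatched afterwards --- it \emph{is} the proof of the subdimension lower bound. The paper's argument goes through Theorem \ref{saturated} (the poset is antigraded), Proposition \ref{newrkcondition} (the codimension-one inclusion criterion, which is where the distinction between $n=2$ and $n\geq 3$ enters), Proposition \ref{nonmax}, and the existence of a rank-$p$ maximal element (identity for $n\geq3$, a Hertzsprung permutation such as $[2,4,\dots,1,3,\dots]$ for $n=2$, $p\geq4$). Without at least the analogue of Proposition \ref{newrkcondition} you cannot distinguish which rank-$p$ strata survive, and hence cannot conclude $\sdim=(n+1)p^2-p$ rather than $\geq$ or $>$.
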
 

It is now easy to deduce our main results from the above.

\begin{proof}[Proof of Theorem \ref{mainpxp}]
We have by \eqref{krullbound}
\[|S| \geq 2 \dim(\M_p^n) - \sdim(\mathcal{S}_{G,\M_2^n}) = 2np^2 - ((n+1)p^2 - p) = (n-1)p^2+p.\]
\end{proof}

In particular these results show that  $\C[\M^n_3]^G$ contains no polynomial or hypersurface separating set for $n \geq 3$, and that $\C[\M_p^n]^G$ contains no polynomial or hypersurface separating set for $p \geq 4$ and $n \geq 2$. Let $Q_n$ be a quiver with one node and $n$ loops, and recall that $Q_n$ has finite representation type if $n=1$ and wild representation type otherwise. We thus obtain the following result:

\begin{Theorem}\label{conjproved}
 The representation type of $Q_n$ is tame or finite if and only if for all dimension (vectors) $p$, $\C[\mathrm{Rep}(Q_n,p)]^{\SL_p(\C)}$ contains a polynomial or hypersurface separating set.
\end{Theorem}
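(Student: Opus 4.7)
The plan is to handle the two directions of the biconditional separately, as it is essentially a bookkeeping corollary assembling the ingredients already collected earlier in the paper.

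For the forward direction, note that among all $n$-loop quivers, only $Q_1$ is tame (and none is finite), so it suffices to treat $n = 1$. Here $\mathrm{Rep}(Q_1, p) = \M_p(\C)$ and $G = \SL_p(\C)$ acts by conjugation, with the same invariant ring as $\GL_p(\C)$. By the classical Chevalley restriction theorem (equivalently, the Weyl integration formula applied to the semisimple part, or directly Newton's identities applied to characteristic polynomial coefficients), $\C[\M_p]^{\GL_p(\C)}$ is a polynomial algebra in the $p$ generators $\Tr(X), \Tr(X^2), \ldots, \Tr(X^p)$. A generating set is automatically a separating set, so these provide a polynomial separating set of the required size, establishing one implication for all $p$.

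For the backward direction, I plan to prove the contrapositive: if $n \geq 2$, there exists at least one $p$ for which $\C[\M_p^n]^G$ admits no polynomial or hypersurface separating set. This follows by combining Theorem \ref{mainpxp} with Proposition \ref{dimcvg} and a small case split based on the hypotheses of Theorem \ref{mainpxp}. For $n \geq 3$, take $p = 3$: Theorem \ref{mainpxp} gives $|S| \geq (n-1)\cdot 9 + 3$ for any separating set $S$, while Proposition \ref{dimcvg} gives $\dim \C[\M_3^n]^G = (n-1)\cdot 9 + 1$, so a hypersurface separating set would have cardinality $(n-1)\cdot 9 + 2$, which is strictly less than the bound. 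For $n = 2$, take $p = 4$: Theorem \ref{mainpxp} gives $|S| \geq 16 + 4 = 20$, while $\dim \C[\M_4^2]^G = 17$, so a hypersurface separating set would have size $18 < 20$. In either case, no polynomial or hypersurface separating set can exist at the chosen $p$.

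The two directions together yield the biconditional, and there is no real obstacle once Theorem \ref{mainpxp} has been established; the work here is purely arithmetic checking that the lower bound on $|S|$ exceeds $\dim \C[\M_p^n]^G + 1$ for the witness dimensions chosen. The only mild subtlety is remembering that $Q_n$ is never of finite type for $n \geq 1$ (it is tame at $n = 1$ and wild thereafter), so the ``tame or finite'' clause is covered entirely by the tame case $n = 1$.
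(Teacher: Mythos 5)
Your proof is correct and follows essentially the same strategy as the paper's: the forward direction reduces to the classical fact that $\C[\M_p]^{\GL_p(\C)}$ is a polynomial algebra (the paper cites this as well-known rather than invoking Chevalley restriction, but the substance is identical), and the backward direction is the contrapositive via Theorem \ref{mainpxp} and Proposition \ref{dimcvg}. The only cosmetic difference is that the paper takes $p=4$ as a single witness for all $n \geq 2$, whereas you split into $p=3$ for $n \geq 3$ and $p=4$ for $n=2$; both choices lie in the applicable range of Theorem \ref{mainpxp} and the arithmetic checks out either way.
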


\begin{proof}
When $n=1$ it is well-known that $\C[\M^n_p]^G$ is a polynomial ring for all $p$. Conversely, by Theorem \ref{mainsepvar} $\C[\M^n_4]^G$ contains no polynomial or hypersurface separating set for all $n \geq 2$. 
\end{proof}

In other words, Conjecture \ref{conj} is indeed true for the $n$-loop quiver.

\begin{rem} If we were interested in finding the smallest $p$ such that $\C[\mathrm{Rep}(Q_n,p)]^{\SL_p(\C)}$ contains no polynomial or hypersurface separating set, we could take $p=2$ when $n \geq 4$ and $p=3$ when $n=3$, and $p=4$ when $n=2$.
\end{rem}

\subsection{Organisation of paper}
This paper is organised as follows: in Section 2 we describe a coarse decomposition of the separating variety for $S_{G,\M_p^n}$ for any $p \geq 2$ and $n \geq 2$. In Section 3 we prove that the irreducible components of $S_{G,\M_p^n}$ are in 1-1 correspondence with the maximal elements of 
 a certain poset $\mathcal{P}_{p,n}$ which we describe in a purely combinatorial fashion. In Section 4 we characterise the maximal elements of $\mathcal{P}_{p,n}$ when $n \geq 3$. This allows us to give a formula for the number of components of  $S_{G,\M_p^n}$ of given codimension, and prove Theorem \ref{mainsepvar} for $n \geq 3$. We also give explicit irredundant decompositions of  $S_{G,\M_p^n}$ when $p=2,3,4$ and $n \geq 3$. In Section 5 we prove Theorem \ref{mainsepvar} for $n=2$ and give explicit irredundant decompositions of  $S_{G,\M_p^2}$ when $p=2,3,4$. In Section 6 we use a homomorphism due to Domokos to deduce lower bounds for the cardinality of a separating set for matrix semi-invariants and prove Conjecture \ref{conj} for the $n$-Kronecker quiver.

\begin{ack}
This work began while the author was visiting Prof. Harm Derksen at Northeastern University, a visit made possible by the EPSRC small grant scheme (ref: EP/W001624/1). The author would like to thank the research council for their support, and Prof. Derksen for his hospitality and help with the computations which inspired this work. The author would also like to thank Lieven Le Bruyn, Vesselin Drensky and Gregor Kemper for helpful answers to questions. 
\end{ack}

\section{Decomposing the separating variety for matrix invariants}\label{sec:sepvarmatinv}

In this section we consider the action of $G:= \GL_p(\C)$ on $\vv:= \M_{p}^n$ by simultaneous conjugation, where $n \geq 2$. Our aim is to describe the separating variety $\mathcal{S}_{G,\vv}$ as a union of closed irreducible subvarieties. 

For each $k \leq p$ let $\Pi(p,k)$ denote the set of ordered partitions of $p$ into $k$ nonzero parts. If $\pi \in \Pi(p,k)$ we say $k = |\pi|$ is the rank of $\pi$. We define $\Pi(p) = \sqcup_{k=1}^p \Pi(p,k)$. We write as a convention $\pi = (p_1,p_2, \ldots, p_k) \in \Pi(p,k)$, and if we need to discuss a second partition $\hat{\pi}$ we write $|\hat{\pi}| = \hat{k}$ and $\hat{\pi} = (\hat{p}_1,\hat{p}_2, \ldots, \hat{p}_{\hat{k}}) \in \Pi(p,\hat{k})$. 
Let $\pi \in \Pi(p,k)$, and $\hat{\pi} \in \Pi(p,k-1)$. We write $\pi \leq \hat{\pi}$ if and only if there exists some $1 \leq j \leq k-1$ with the properties that
\begin{itemize}
\item $p_i = \hat{p}_i$ for all $i<j$;
\item $p_j+p_{j+1} = \hat{p}_j$
\item $p_i = \hat{p}_{i-1}$ for all $i>j$.
\end{itemize}
In other words if $\hat{\pi}$ is formed by merging two blocks of $\pi$. We extend the relation $\leq$ to all pairs of partitions in $\Pi(p)$ by taking the transitive and reflexive closure. In this way $\Pi(p)$ becomes an anitgraded poset with respect to rank and $\leq$. We say $\pi$ is a refinement of $\hat{\pi}$ if $\pi \leq \hat{\pi}$.

For $\pi$ and $\hat{\pi}$ partitions of $p$, a {\it  $(\pi,\hat{\pi})$-block $n$-matrix} is an $n$-matrix partitioned by $\pi$ along the rows and $\hat{\pi}$ along the columns, i.e. into rectangular $p_i \times \hat{p}_{j}$ submatrices, and a  {\it $\pi$-block $n$-matrix} is a $(\pi, \pi)$-block $n$ matrix. We adopt the convention that if $\bA \in \vv$ is a $(\pi,\hat{\pi})$-block $n$-matrix, then $\bB_{i,j}$ denotes the $p_i \times \hat{p}_j$ $n$-matrix in the $(i,j)$th block, with a similar convention for a second $n$-matrix $\bA'$. For an ordered $k$-tuple of matrices $(W_i \in \M_{p_i} :i=1, \ldots, k)$ we write $\bigoplus_{i=1}^k W_i$ for the $\pi$-block diagonal matrix with $(i,i)$th block equal to $W_i$.

 Let $\pi \in \Pi(p,k)$. We denote by $\vv_{\pi}$ the subspace of $\vv$ whose elements are $\pi$-block-upper triangular $n$-matrices, i.e. those $\bA \in \vv$ with $\bB_{i,j} = 0$ for $i>j$. The dimension of this space is
\begin{equation}\label{dimVpi}
\frac12 n (p^2+\sum_{i=1}^k p_i^2).
\end{equation}

The easiest way to see this is to consider the opposite subspace $\tilde{\vv}_{\pi}$ of $\pi$-block-lower triangular matrices. Evidently $\vv_{\pi}+\tilde{\vv}_{\pi} = \vv$, and $\dim(\vv_{\pi}) = \dim(\tilde{\vv}_\pi)$, while $\dim(\vv_\pi \cap \tilde{\vv}_{\pi}) = n(\sum_{i=1}^k p_i^2)$ as this is the space of $\pi$-block diagonal $n$-matrices.

 We denote by $P_{\pi}$ the parabolic subgroup of $G$ stabilising $\vv_{\pi}$. By an argument similar to the above, its dimension is $\frac12  (p^2+\sum_{i=1}^k p_i^2)-1$. The following Lemma generalises \cite[Lemma~3.4]{ElmerSemiSepVar}:

\begin{Lemma}\label{harm}
Let $G$ be a reductive linear algebraic group over $\kk$ and $P \leq G$ a parabolic subgroup. Let $V$ be a vector space over $\kk$ on which $G$ acts linearly and let $W$ be a closed subset of $V$. Suppose $P \cdot W \subseteq W$. Then $G \cdot W$ is closed.
\end{Lemma}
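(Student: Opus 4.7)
The plan is to exploit the defining property of parabolic subgroups: that $G/P$ is a complete (projective) variety. Once we have this, the completeness will give us a closed projection map, and the $P$-stability of $W$ will allow us to rewrite $G \cdot W$ as the image of a closed set under that projection.

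First I would form the associated bundle $G \times^P W$, or equivalently construct the subset
\[
\widetilde{W} := \{(gP, v) \in G/P \times V : g^{-1} v \in W\} .
\]
The hypothesis $P \cdot W \subseteq W$ (which, since $P$ is a group, is equivalent to $W$ being $P$-stable) is exactly what is needed to make this definition independent of the coset representative $g$: if $v \in g \cdot W$, then $v \in gp \cdot W$ for every $p \in P$. Next I would check that $\widetilde{W}$ is closed in $G/P \times V$. The cleanest way is to work locally: $G \to G/P$ is a locally trivial principal $P$-bundle in the \'etale (or even Zariski, for affine $P$) topology, so on each trivialising open $U \subseteq G/P$ with section $s: U \to G$, the intersection $\widetilde{W} \cap (U \times V)$ is the preimage of the closed set $W$ under the morphism $(u, v) \mapsto s(u)^{-1} v$, hence closed.

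Now I would invoke completeness of $G/P$: the second projection $\mathrm{pr}_2 : G/P \times V \to V$ is a closed map because $G/P$ is a complete variety. Applying this to the closed set $\widetilde{W}$ gives that $\mathrm{pr}_2(\widetilde{W})$ is closed in $V$. Finally, I would identify this image as $G \cdot W$: a point $v \in V$ lies in $\mathrm{pr}_2(\widetilde{W})$ if and only if there exists $g \in G$ with $g^{-1}v \in W$, i.e.\ $v \in g \cdot W$, which is precisely the condition $v \in G \cdot W$. Hence $G \cdot W = \mathrm{pr}_2(\widetilde{W})$ is closed.

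The main (mild) obstacle is the second step, namely verifying rigorously that $\widetilde{W}$ is closed in $G/P \times V$, since this requires a little care about local triviality of $G \to G/P$. An alternative that avoids any local-triviality argument is to use the surjective morphism $G \times W \to G/P \times V$ sending $(g, w) \mapsto (gP, gw)$: its image is $\widetilde{W}$, and by descent along the faithfully flat (in fact smooth) quotient $G \times W \to (G \times W)/P = G \times^P W$, where $P$ acts by $p \cdot (g, w) = (gp^{-1}, pw)$, one obtains $\widetilde{W}$ as a closed subvariety of $G/P \times V$ directly from the closedness of $W$ in $V$. Either way, the remaining step is routine and the proof is complete.
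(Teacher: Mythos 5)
Your argument is essentially identical to the paper's: both form the incidence set $\{(gP,v)\in G/P\times V : g^{-1}v\in W\}$, observe that its image under the (closed, because $G/P$ is complete) projection to $V$ is exactly $G\cdot W$, and conclude. The only difference is that you spell out the closedness of this incidence set via local triviality or descent, where the paper dismisses it as ``clear''; that extra care is welcome but does not change the route.
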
  

\begin{proof} Consider the set $Z:= \{(gP,v): g^{-1}v \in W\} \subseteq G/P \times V$. Let $\pi$ be the projection $G/P \times V \rightarrow V$. Then $G \cdot W = \pi(Z)$. As $G/P$ is a projective, hence complete, variety, the map $\pi$ is projective and takes closed sets to closed sets. Clearly $Z$ is closed, so $G \cdot W$ is closed as required.
\end{proof}

Consequently each orbit space $G \cdot \vv_{\pi}$ is closed, with dimension equal to 
\begin{equation}\label{dimGVpi} \dim(G)-\dim(P_{\pi})+\dim(\vv_{\pi})=p^2 + \frac12 (n-1) (p^2+\sum_{i=1}^k p_i^2).
\end{equation} 

For a permutation $\sigma \in S_k$ we set $\sigma(\pi) = (p_{\sigma(1)},p_{\sigma(2)}, \ldots, p_{\sigma(k)}) \in \Pi(p,k)$.  The action of $S_k$ induces an equivalence relation on $\Pi(p,k)$. We write $[\pi]$ for the equivalence class containing $\pi$. We write $$[\Pi(p,k)] = \Pi(p,k)/S_k$$ and $[\Pi(p)] = \sqcup_{k=1}^p [\Pi(p,k)]$. The partial order on $\Pi(p)$ induces a partial order on $[\Pi(p)]:$ we write $[\pi] \leq [\hat{\pi}]$ if and only if $\kappa \leq \hat{\kappa}$ for some $\kappa \in [\pi]$ and $\hat{\kappa} \in [\hat{\pi}]$. 

Now for each $\pi \in \Pi(p,k)$ and for each permutation $\sigma \in S_k$, we define a subspace $\cc_{\pi,\sigma}$ of $\vv^2$ as follows: $\cc_{\pi,\sigma}$ consists of pairs of $n$-matrices $(\bA,\bA')$ such that:

\begin{itemize}
\item $\bA \in \vv_{\pi}$. 
\item $\bA' \in \vv_{\sigma(\pi)}$.
\item $\bB_{i,i} = \bB'_{\sigma(i),\sigma(i)}$ for all $i=1,\ldots, k$. 
\end{itemize}

The dimension of $\cc_{\pi, \sigma}$ is $n(p^2+ \sum_{i=1}^k p_i^2)$. Notice that $G^2 \cdot \cc_{(p),\id} = \{(\bA, g \cdot \bA): \bA \in \vv, g \in G\}$ is the graph of the action. It follows from Proposition \ref{closures} that we have a decomposition of the separating variety into closed irreducible sets:
\begin{equation}\label{fundamentaldecomp}
\mathcal{S}_{G,\vv} = \bigcup_{k=1}^p \bigcup_{\pi \in \Pi(p,k)} \bigcup_{\sigma \in S_k} \overline{G^2 \cdot \cc_{\pi,\sigma}}.
\end{equation}
This decomposition may be redundant, i.e. we may be able to delete some terms from the right hand side. Note that, since each term on the right hand side is irreducible, redundancy is only possible if there are inclusions between the terms; if $\cc \subseteq \mathcal{A} \cup \mathcal{B}$ and $\cc$ is irreducible then $\cc \subseteq \mathcal{A}$ or $\cc \subseteq \mathcal{B}$.

\subsection{Dimensions}

Our first task is to compute the dimensions of the varieties on the right hand side of \eqref{fundamentaldecomp}. Note it is not clear which, if any, of the orbit sets $G^2 \cdot \cc_{\pi,\sigma}$ are closed.   

\begin{cor}\label{speculation}
We have \[\overline{G^2 \cdot \cc_{\pi, \sigma}} = G^2 \cdot \overline{(P_\pi \times P_{\sigma(\pi)}) \cdot \cc_{\pi, \sigma}}.\]
In particular $G^2 \cdot \cc_{\pi, \sigma}$ is closed if $\pi = (1,1, \ldots, 1)$.
\end{cor}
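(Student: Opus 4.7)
The plan is to apply Lemma \ref{harm} with ambient reductive group $G^2$, parabolic subgroup $P_\pi \times P_{\sigma(\pi)}$ (a product of parabolics, hence parabolic in $G^2$), and closed subset $W := \overline{(P_\pi \times P_{\sigma(\pi)}) \cdot \cc_{\pi,\sigma}}$. Because $W$ is the closure of a $(P_\pi \times P_{\sigma(\pi)})$-stable set and each group element acts as a homeomorphism, $W$ is itself $(P_\pi \times P_{\sigma(\pi)})$-stable. Lemma \ref{harm} then yields that $G^2 \cdot W$ is closed in $\vv^2$.

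The two inclusions will then follow formally. For $\overline{G^2 \cdot \cc_{\pi,\sigma}} \subseteq G^2 \cdot W$, note $\cc_{\pi,\sigma} \subseteq W$, so $G^2 \cdot \cc_{\pi,\sigma} \subseteq G^2 \cdot W$, and the right-hand side is already closed. Conversely, $(P_\pi \times P_{\sigma(\pi)}) \cdot \cc_{\pi,\sigma} \subseteq G^2 \cdot \cc_{\pi,\sigma} \subseteq \overline{G^2 \cdot \cc_{\pi,\sigma}}$; passing to the closure and then to the $G^2$-orbit preserves this containment, because $\overline{G^2 \cdot \cc_{\pi,\sigma}}$ is closed and $G^2$-stable (being the closure of a $G^2$-stable set, and $G^2$ acting by homeomorphisms).

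For the special case $\pi = (1,1,\ldots,1)$, $P_\pi$ reduces to the Borel $B$ of upper-triangular invertibles, and each diagonal ``block'' $\bB_{i,i}$ is simply an $n$-tuple of scalars. The key observation is that if $b \in B$ then its action by conjugation on a $1 \times 1$ block is trivial, so $B$-conjugation preserves the diagonal entries of any $\bA \in \vv_\pi$. Consequently $(B \times B) \cdot \cc_{\pi,\sigma} = \cc_{\pi,\sigma}$, and $\cc_{\pi,\sigma}$ is visibly closed (being cut out by vanishing of strictly-lower-triangular coordinates together with equality of corresponding diagonal coordinates on $\vv^2$). Thus $W = \cc_{\pi,\sigma}$, and Lemma \ref{harm} applied directly to $\cc_{\pi,\sigma}$ gives that $G^2 \cdot \cc_{\pi,\sigma}$ is closed.

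I do not anticipate a serious obstacle; the proof is essentially a formal double-inclusion built on Lemma \ref{harm}. The only subtlety worth flagging is \emph{why} the closure in $W$ is genuinely needed when some $p_i > 1$: in that regime the $\GL_{p_i}$-conjugation action on the $(i,i)$-block is nontrivial and has non-closed orbits in general, so $(P_\pi \times P_{\sigma(\pi)}) \cdot \cc_{\pi,\sigma}$ can fail to be closed before taking the ambient closure — which is precisely what collapses when every $p_i = 1$.
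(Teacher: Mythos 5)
Your proof is correct and follows exactly the paper's own approach: apply Lemma \ref{harm} to $W = \overline{(P_\pi \times P_{\sigma(\pi)}) \cdot \cc_{\pi,\sigma}}$, then deduce the equality formally, and for $\pi = (1,\ldots,1)$ observe that $B \times B$ already stabilizes $\cc_{\pi,\sigma}$ (because conjugation by a triangular matrix fixes diagonal entries), so $W = \cc_{\pi,\sigma}$. You simply spell out two details the paper leaves implicit — the formal double inclusion and the reason $P_\pi^2$ stabilizes $\cc_{\pi,\sigma}$ when the blocks are $1\times 1$.
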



\begin{proof} The first part follows on applying Lemma \ref{harm} to $\overline{(P_\pi \times P_{\sigma(\pi)}) \cdot \cc_{\pi, \sigma}}$ which is obviously stablilised by the parabolic subgroup $P_{\pi} \times P_{\sigma(\pi)}$ of $G^2$. For the second statement we note that if $\pi = (1, \ldots, 1)$, then $\sigma(\pi) = \pi$ and $P _{\pi}\times P_{\sigma(\pi)} = P^2_{\pi}$ stabilises $\cc_{\pi, \sigma}$.
\end{proof}

The varieties $\overline{(P_\pi \times P_{\sigma(\pi)}) \cdot \cc_{\pi, \sigma}}$ have the following inductive description.

\begin{Lemma}\label{clCpisigma} Let $(\bA, \bA') \in \vv^2$. Then $(\bA, \bA') \in \overline{(P_\pi \times P_{\sigma(\pi)}) \cdot \cc_{\pi, \sigma}}$ if and only if $\bA \in \vv_\pi, \bA' \in \vv_{\sigma(\pi)}$ and $(\bB_{i,i},\bB'_{\sigma(i), \sigma(i)}) \in \overline{\GL_{p_i}^2 \cdot \cc_{(p_i), \id}}$ for all $i=1, \ldots, k$.
\end{Lemma}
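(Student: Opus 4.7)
I would prove the two inclusions separately. First observe that the right-hand side is a closed subset of $\vv^2$: it is the intersection of the linear subspaces $\vv_\pi \times \vv_{\sigma(\pi)}$ with the preimages under the regular projection $(\bA,\bA')\mapsto (\bB_{i,i},\bB'_{\sigma(i),\sigma(i)})$ of the closed sets $\overline{\GL_{p_i}^2\cdot\cc_{(p_i),\id}}$. Hence for the inclusion $\overline{(P_\pi\times P_{\sigma(\pi)})\cdot\cc_{\pi,\sigma}}\subseteq \text{RHS}$ it is enough to check that every $(\bA,\bA')=(g\cdot\bA_0,h\cdot\bA'_0)$ with $(\bA_0,\bA'_0)\in\cc_{\pi,\sigma}$ and $(g,h)\in P_\pi\times P_{\sigma(\pi)}$ satisfies the RHS conditions. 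The first two are immediate since $P_\pi$ stabilises $\vv_\pi$; for the third, the diagonal block of a conjugate by a block upper-triangular matrix is the conjugate by the corresponding diagonal block, so $\bB_{i,i}=g_i(\bB_0)_{i,i}g_i^{-1}$ and $\bB'_{\sigma(i),\sigma(i)}=h_{\sigma(i)}(\bB'_0)_{\sigma(i),\sigma(i)}h_{\sigma(i)}^{-1}$, and the matching condition $(\bB_0)_{i,i}=(\bB'_0)_{\sigma(i),\sigma(i)}$ in $\cc_{\pi,\sigma}$ places this pair in $\GL_{p_i}^2\cdot\cc_{(p_i),\id}$.

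For the reverse inclusion, I would produce an explicit approximating sequence. Given $(\bA,\bA')$ satisfying the RHS conditions, for each $i$ the constructibility of $\GL_{p_i}^2\cdot\cc_{(p_i),\id}$ (so that its Zariski closure coincides with its classical closure over $\C$) gives sequences $g_i^{(m)},h_i^{(m)}\in\GL_{p_i}$ and $\bD_i^{(m)}\in\M_{p_i}^n$ with
\[g_i^{(m)}\bD_i^{(m)}(g_i^{(m)})^{-1}\;\longrightarrow\;\bB_{i,i},\qquad h_i^{(m)}\bD_i^{(m)}(h_i^{(m)})^{-1}\;\longrightarrow\;\bB'_{\sigma(i),\sigma(i)}.\]
I then assemble the block-diagonal elements $g^{(m)}\in P_\pi$ with diagonal blocks $g_i^{(m)}$, and $h^{(m)}\in P_{\sigma(\pi)}$ whose $\sigma(i)$-th diagonal block is $h_i^{(m)}$. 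To produce the approximating elements of $\cc_{\pi,\sigma}$, I define $\bA_0^{(m)}\in\vv_\pi$ with diagonal blocks $(\bA_0^{(m)})_{i,i}=\bD_i^{(m)}$ and upper off-diagonal blocks $(\bA_0^{(m)})_{i,j}=(g_i^{(m)})^{-1}\bB_{i,j}g_j^{(m)}$ for $i<j$, and similarly define $(\bA'_0)^{(m)}\in\vv_{\sigma(\pi)}$ so that its $\sigma(i)$-th diagonal block equals $\bD_i^{(m)}$ and its off-diagonal blocks are $(h^{(m)}_{j})^{-1}\bB'_{j,\ell}h^{(m)}_{\ell}$ for $j<\ell$. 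By construction the diagonal blocks match on the two sides, so $(\bA_0^{(m)},(\bA'_0)^{(m)})\in\cc_{\pi,\sigma}$. A direct computation shows that $g^{(m)}\cdot\bA_0^{(m)}$ has off-diagonal blocks exactly equal to those of $\bA$ while its diagonal blocks converge to those of $\bA$, so $g^{(m)}\cdot\bA_0^{(m)}\to\bA$, and likewise $h^{(m)}\cdot(\bA'_0)^{(m)}\to\bA'$, proving $(\bA,\bA')\in\overline{(P_\pi\times P_{\sigma(\pi)})\cdot\cc_{\pi,\sigma}}$.

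The main obstacle is in the reverse direction: we must simultaneously approximate the pairs of diagonal blocks $(\bB_{i,i},\bB'_{\sigma(i),\sigma(i)})$ for all $i$ from a single source inside $\cc_{\pi,\sigma}$, which forces the diagonal blocks of $\bA_0^{(m)}$ and those of $(\bA'_0)^{(m)}$ to coincide pairwise. The key device that makes the construction work is to use the \emph{same} approximating sequence $\bD_i^{(m)}$ on both sides, exploiting the fact that the two closures $\overline{\GL_{p_i}^2\cdot\cc_{(p_i),\id}}$ are defined via a common middle term. The off-diagonal blocks of $\bA$ and $\bA'$ are then independent data that can be absorbed separately by the freedom in the upper off-diagonal entries of $\bA_0^{(m)}$ and $(\bA'_0)^{(m)}$, which are unconstrained by the matching condition defining $\cc_{\pi,\sigma}$.
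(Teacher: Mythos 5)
Your proof is correct, and the core idea — passing between the closure condition on the full pair and the closure conditions on the diagonal blocks via approximation — matches the paper's. The main differences are notational and organisational. For the inclusion of the orbit closure in the block condition, you observe directly that the right-hand side is closed (a finite intersection of the linear subspace $\vv_\pi\times\vv_{\sigma(\pi)}$ with pullbacks of closed sets along linear projections to diagonal blocks) and contains $(P_\pi\times P_{\sigma(\pi)})\cdot\cc_{\pi,\sigma}$; the paper instead unwinds the closure using curves over $\C((t))$ and extracts the diagonal-block identity. Your route is tidier. For the reverse inclusion, you use sequences rather than formal curves, which is equivalent over $\C$; more to the point, you are more careful than the paper in two respects: you use both factors $(g^{(m)},h^{(m)})\in P_\pi\times P_{\sigma(\pi)}$ symmetrically (the paper acts only by $(g(t),\id)$ and does not explicitly define the second coordinate $\bA'(t)$ of the approximating curve), and you twist the off-diagonal blocks of $\bA_0^{(m)}$ and $(\bA'_0)^{(m)}$ by $(g_i^{(m)})^{-1}(\cdot)g_j^{(m)}$ and $(h_j^{(m)})^{-1}(\cdot)h_\ell^{(m)}$ so that after acting by $g^{(m)}$, $h^{(m)}$ the off-diagonal blocks of the images are exactly those of $\bA$, $\bA'$ (the paper leaves them constant in $t$, so convergence of the off-diagonal part of the image is not automatic and is left implicit). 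One small point worth flagging: you should say a word on why $\GL_{p_i}^2\cdot\cc_{(p_i),\id}$ is constructible (it is the image of a morphism $\GL_{p_i}^2\times\cc_{(p_i),\id}\to\vv^2$ between varieties, hence constructible by Chevalley), to justify replacing Zariski closure by classical limits; once that is said, your argument is complete.
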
 

\begin{proof} The necessity of $\bA \in \vv_\pi, \bA' \in \vv_{\sigma(\pi)}$ is clear as $P_\pi$ fixes $\vv_{\pi}$ and $P_{\sigma(\pi)}$ fixes $\vv_{\sigma(\pi)}$. Suppose that $(\bB_{i,i},\bB'_{\sigma(i), \sigma(i)}) \in \overline{\GL_{p_i}^2 \cdot \cc_{(p_i), \id}}$ for all $i=1, \ldots, k$. Then there exist $g_i(t): \in \GL_{p_i}(\C((t))$, $\bC_{i,i}(t) \in \M_{p_i,p_i}(\C[[t]])$ such that 
\[\lim_{t \rightarrow 0} \bC_{i,i}(t) = \bB_{i,i}\]
and 
\[\lim_{t \rightarrow 0} g_i(t) \cdot \bC_{i,i}(t) = \bB'_{\sigma(i),\sigma(i)}\] for all $i=1,\ldots, k$.
Let $g(t) \in P_{\pi}(\C((t)))$ be defined by
 $$g(t):= \bigoplus_{i=1}^k g_i(t) \in P_{\pi},$$ i.e. $g(t)$ is the block-diagonal matrix with blocks given by $g_i(t)$. Now we define $\bA \in \vv(\C[[t]])$ blockwise according to the partition $\pi$: the image of the $i,j$th block is given by
\[\bB_{i,j}(t) = \left\{ \begin{array}{lr} \bC_{i,j}(t) & i=j \\ \bB_{i,j} & \ \text{otherwise}. \end{array} \right.\] 

Then we have 
$$(g(t) \cdot \bA(t), \bA'(t)) \in \cc_{\pi, \sigma}$$ for all $t \in \C^*$. It follows that $(\bA,\bA') \in \overline{(P_{\pi} \times P_{\sigma(\pi)}) \cdot  \cc_{\pi, \sigma}}$ as required.

Conversely suppose that $(\bA,\bA') \in \overline{(P_{\pi} \times P_{\sigma(\pi)}) \cdot  \cc_{\pi, \sigma}}$. Then there exist $g(t) \in P_{\pi}(\C((t)))$, $\bA(t) \in \vv_{\pi}(\C[[t]])$ and  $\bA'(t) \in \vv_{\sigma(\pi)}(\C[[t]])$  such that 
\[\lim_{t \rightarrow 0} \bA(t) = \bA, \lim_{t \rightarrow 0} \bA'(t) = \bA'\]
and \begin{equation}\label{above} (g(t) \cdot \bA(t), \bA'(t)) \in \cc_{\pi, \sigma}\end{equation} for all $t \in \C^*$.
Let $g_i(t) \in \GL_{p_i}$ denote the $(i,i)th$ block of $g(t)$. Let $\bB_{i,j}(t), \bB'_{i,j}(t)$ denote the $i,j$th block of $\bA(t), \bA'(t)$ respectively.
\eqref{above} above shows that $$g_i(t) \cdot \bB_{i,i}(t) = \bB'_{\sigma(i),\sigma(i)}(t)$$ for all $i = 1, \ldots, k$ and all $t \in \C^*$. Therefore  $(\bB_{i,i},\bB'_{\sigma(i), \sigma(i)}) \in \overline{\GL_{p_i}^2 \cdot \cc_{(p_i), \id}}$ for all $i=1, \ldots, k$ as required.    
\end{proof}

\begin{cor}\label{dimG2Cpisigma}
The dimension of the variety $\overline{G^2 \cdot \cc_{\pi, \sigma}}$ is $(n+1) p^2 - k$.
\end{cor}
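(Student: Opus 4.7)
The plan is to apply Corollary \ref{speculation}, which gives $\overline{G^2 \cdot \cc_{\pi, \sigma}} = G^2 \cdot X$ for $X := \overline{(P_\pi \times P_{\sigma(\pi)}) \cdot \cc_{\pi, \sigma}}$, and then to compute $\dim(G^2 \cdot X)$ by the fibre-dimension formula for the action map $\phi \colon G^2 \times X \to G^2 \cdot X$, $(g, x) \mapsto g x$. For $y \in G^2 \cdot X$, the fibre $\phi^{-1}(y)$ is in bijection with $\{g \in G^2 : g^{-1} y \in X\}$, whose dimension is $\dim \Stab_{G^2}(y) + \dim(X \cap G^2 y)$.

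First I would compute $\dim X$ using Lemma \ref{clCpisigma}. A generic point of $X$ is specified by (i) the strictly block-upper-triangular parts of $\bA \in \vv_\pi$ and $\bA' \in \vv_{\sigma(\pi)}$, contributing $n(p^2 - \sum_i p_i^2)$ by \eqref{dimVpi}, together with (ii) paired diagonal blocks $(\bB_{i,i}, \bB'_{\sigma(i),\sigma(i)}) \in \overline{\GL_{p_i}^2 \cdot \cc_{(p_i), \id}}$ for $i = 1, \ldots, k$. Each such pair lives in the closure of the graph of the $\GL_{p_i}$-conjugation action on $\M_{p_i}^n$, which has dimension $(n+1)p_i^2 - 1$ since a generic $n$-tuple has stabiliser the centre $\Gm$. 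Summing yields $\dim X = np^2 + \sum_i p_i^2 - k$.

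Next I would analyse the generic fibre of $\phi$. Writing $y = (\bA_y, \bA'_y)$, a Schur-type argument shows that for generic $y$ the stabiliser $\Stab_G(\bA_y) = \Gm$, so $\dim \Stab_{G^2}(y) = 2$. On a dense open subset of $G^2 \cdot X$ one has $X \cap G^2 y = (G \bA_y \cap \vv_\pi) \times (G \bA'_y \cap \vv_{\sigma(\pi)})$, and inverting the derivation of \eqref{dimGVpi} yields $\dim(G \bA \cap \vv_\pi) = \tfrac12(p^2 + \sum_i p_i^2) - 1$. Assembling,
\[
\dim(G^2 \cdot X) = 2p^2 + \bigl(np^2 + \textstyle\sum_i p_i^2 - k\bigr) - \bigl(p^2 + \textstyle\sum_i p_i^2\bigr) = (n+1)p^2 - k.
\]

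The main obstacle is verifying the factorisation $X \cap G^2 y = (G \bA_y \cap \vv_\pi) \times (G \bA'_y \cap \vv_{\sigma(\pi)})$, i.e.\ that the diagonal-block compatibility imposed by Lemma \ref{clCpisigma} is automatic on a generic $G^2$-orbit. I would handle this via a composition-series argument: for generic $\bA \in \vv_\pi$ the $\pi$-ordered composition factors of $\bA$ are pairwise non-isomorphic generic simple modules of $\C\langle x_1, \ldots, x_n\rangle$ and are uniquely determined, so any $g\bA \in \vv_\pi$ has the same ordered block-diagonal data up to independent $\GL_{p_i}$-conjugation within each block; consequently the compatibility condition is automatic for points of $X$ lying over a common semi-simplification, and the product formula for the intersection follows.
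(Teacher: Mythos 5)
Your proposal follows the same reduction as the paper: you invoke Corollary~\ref{speculation} to write $\overline{G^2\cdot\cc_{\pi,\sigma}} = G^2\cdot X$ with $X = \overline{(P_\pi\times P_{\sigma(\pi)})\cdot\cc_{\pi,\sigma}}$, and your computation of $\dim X = np^2 + \sum_i p_i^2 - k$ via Lemma~\ref{clCpisigma} is exactly the paper's. Where you diverge is the last step. The paper applies the parabolic collapsing formula $\dim(G^2\cdot X) = \dim G^2 - \dim(P_\pi\times P_{\sigma(\pi)}) + \dim X$ directly (with the effective conventions $\dim G = p^2-1$, $\dim P_\pi = \tfrac12(p^2+\sum p_i^2)-1$), while you compute the generic fibre of $G^2\times X\to G^2\cdot X$ by hand; the two conventions agree once your $\dim G^2 = 2p^2$ is paired with $\dim\Stab_{G^2}(y)=2$, and both yield $(n+1)p^2-k$. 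The trade-off is worth noting. Your route makes explicit a hypothesis the paper's formula leaves implicit, namely that the generic fibre of $G^2\times_{P_\pi\times P_{\sigma(\pi)}} X\to G^2\cdot X$ is finite --- which is precisely the content of the two claims you flag as the main obstacle: that $X\cap G^2y$ factors as $(G\bA_y\cap\vv_\pi)\times(G\bA'_y\cap\vv_{\sigma(\pi)})$ and that $G\bA_y\cap\vv_\pi = P_\pi\cdot\bA_y$ for generic $y$. Your composition-series sketch for these is essentially Lemma~\ref{maxgen} and Corollary~\ref{ordercor}, which the paper only proves in the \emph{following} subsection; one caveat is that pairwise non-isomorphic simple factors alone do not force the ordered filtration --- you also need the consecutive super-blocks to be indecomposable (the paper's ``maximally general'' condition), otherwise block-diagonal points admit reorderings. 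So your argument is correct but carries a forward dependency; the paper's is slicker but tacitly relies on the same material to justify the collapsing formula, which, as stated, gives only an upper bound without a finiteness check on the fibres.
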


\begin{proof} First suppose $k=1$, then $\pi = (p)$ and $\sigma = \id$, and $G^2 \cdot \cc_{\pi, \sigma}$ is the graph of the action, hence
\[\dim(\overline{G^2 \cdot \cc_{\pi, \sigma}}) = \dim(\overline{G^2 \cdot \cc_{(p), \id}}) = \dim(\vv) + \dim(G) = np^2+p^2-1 = (n+1)p^2 -1\]
as required. Now suppose $k>1$, then by Corollary \ref{speculation} we have

\begin{align*}\dim(\overline{G^2 \cdot  \cc_{\pi, \sigma}}) &= \dim(G^2 \cdot \overline{(P_{\pi} \times P_{\sigma(\pi)}) \cdot \cc_{\pi, \sigma}})\\
&= \dim(G^2) - \dim(P_{\pi}) - \dim(P_{\sigma(\pi)}) + \dim(\overline{(P_{\pi} \times P_{\sigma(\pi)}) \cdot \cc_{\pi, \sigma}})\\
&= 2\dim(G) - 2\dim(P_{\pi})+ \dim(\overline{(P_{\pi} \times P_{\sigma(\pi)}) \cdot \cc_{\pi, \sigma}})\\
&=2(p^2-1) - (p^2-2+ \sum_{i=1}^k p_i^2)+ \dim(\overline{(P_{\pi} \times P_{\sigma(\pi)}) \cdot \cc_{\pi, \sigma}})\\
&=p^2 - \sum_{i=1}^k p_i^2+ \dim(\overline{(P_{\pi} \times P_{\sigma(\pi)}) \cdot \cc_{\pi, \sigma}}).
\end{align*}

Now by Lemma \ref{clCpisigma} we deduce that
\begin{align*} \dim(\overline{(P_{\pi} \times P_{\sigma(\pi)}) \cdot \cc_{\pi, \sigma}}) &= (\dim(\cc_{\pi, \sigma}) - 2n (\sum_{i=1}^k p_i^2)) + \sum_{i=1}^k \dim(\overline{\GL^2_{p_i} \cdot \cc_{(p_i), \id}}) \\
&=   n (p^2 +\sum_{i=1}^k p_i^2) - 2n \sum_{i=1}^k p_i^2 + \sum_{i=1}^k ((n+1)p_i^2 - 1)\\
\intertext{by the $k=1$ case}
&=  n (p^2 -\sum_{i=1}^k p_i^2) +  \sum_{i=1}^k ((n+1)p_i^2 - 1)\\
&= np^2 + \sum_{i=1}^k p_i^2 - k\\
\intertext{hence}
\dim(\overline{G^2 \cdot \cc_{\pi, \sigma}}) &= (n+1)p^2-k \end{align*}
as required.
\end{proof}

We note that we have proved, using the above and \eqref{fundamentaldecomp},

\begin{cor} The separating variety $\mathcal{S}_{G,\vv}$ has dimension $(n+1)p^2-1$ and subdimension $\geq (n+1)p^2-p$.
\end{cor}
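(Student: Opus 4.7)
The plan is to derive this corollary directly from the decomposition \eqref{fundamentaldecomp} together with Corollary \ref{dimG2Cpisigma}. Since \eqref{fundamentaldecomp} writes $\mathcal{S}_{G,\vv}$ as a \emph{finite} union of closed irreducible subsets $\overline{G^2 \cdot \cc_{\pi,\sigma}}$, every irreducible component of $\mathcal{S}_{G,\vv}$ must coincide with one of these pieces---specifically, with one that is maximal among them under inclusion.

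First I would handle the dimension. Corollary \ref{dimG2Cpisigma} gives $\dim(\overline{G^2 \cdot \cc_{\pi, \sigma}}) = (n+1)p^2 - k$ where $k = |\pi|$ ranges over $\{1, \ldots, p\}$. The maximum over all $(\pi, \sigma)$ is $(n+1)p^2 - 1$, attained uniquely when $k=1$ by the single term $\overline{G^2 \cdot \cc_{(p), \id}} = \overline{\Gamma_{G,\vv}}$. Since this term has strictly greater dimension than every other piece in \eqref{fundamentaldecomp}, it cannot be contained in the union of the remaining pieces, so it is itself an irreducible component. This gives $\dim(\mathcal{S}_{G,\vv}) = (n+1)p^2 - 1$.

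For the subdimension, every irreducible component of $\mathcal{S}_{G,\vv}$ is one of the pieces $\overline{G^2 \cdot \cc_{\pi, \sigma}}$ with $|\pi| \leq p$, and hence has dimension at least $(n+1)p^2 - p$. This immediately yields $\sdim(\mathcal{S}_{G,\vv}) \geq (n+1)p^2 - p$, as required.

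There is no substantial obstacle: the dimension arithmetic was already carried out in Corollary \ref{dimG2Cpisigma}, and the only general fact needed is that the irreducible components of a finite union of closed irreducibles must come from that list. The inequality in the subdimension statement (rather than equality) anticipates the fact that not every $\overline{G^2 \cdot \cc_{\pi, \sigma}}$ with $|\pi| = p$ need actually be an irreducible component of $\mathcal{S}_{G,\vv}$; pinning down the subdimension exactly is precisely the task undertaken in the subsequent sections via the poset $\mathcal{P}_{p,n}$.
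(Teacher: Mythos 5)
Your proof is correct and follows exactly the route the paper intends: the paper states this corollary as an immediate consequence of the decomposition \eqref{fundamentaldecomp} and the dimension formula in Corollary \ref{dimG2Cpisigma}, and you have simply spelled out the standard facts about irreducible components of a finite union of irreducible closed sets that make this deduction work.
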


\subsection{Distinctness and compatibility}
Next we want to prove that the varieties on the right hand side of \eqref{fundamentaldecomp} are distinct. We begin by showing: 
\begin{Lemma}\label{inclusionwithsimples} Let $\pi \in \Pi(p,k)$ and $\hat{\pi} \in \Pi(p,\hat{k})$. Suppose we have an inclusion $G \cdot \vv_{\pi} \subseteq G \cdot \vv_{\hat{\pi}}$. Then $[\pi] \leq [\hat{\pi}]$.
\end{Lemma}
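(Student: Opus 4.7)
My strategy has three steps: a geometric reinterpretation of $G \cdot \vv_{\pi}$ as a ``flag variety,'' the production of a witness $\bA^* \in \vv_{\pi}$ with only the standard flag as its $\bA^*$-fixed subspaces, and a combinatorial argument pinning down the relation between $\pi$ and $\hat{\pi}$.

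\emph{Step 1 (flag description).} Writing $V_i := \mathrm{span}(e_1, \ldots, e_{p_1 + \cdots + p_i})$ for $0 \leq i \leq k$, an $n$-matrix $\bA$ lies in $\vv_{\pi}$ precisely when the standard flag $V_0 \subset V_1 \subset \cdots \subset V_k = \C^p$ consists of $\bA$-fixed subspaces. Conjugating by $g \in G$ transports this flag, so $G \cdot \vv_{\pi}$ is exactly the set of $\bA \in \vv$ admitting an $\bA$-fixed flag $0 = U_0 \subset U_1 \subset \cdots \subset U_k = \C^p$ with $\dim(U_i/U_{i-1}) = p_i$ (a \emph{$\pi$-flag}). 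The converse direction uses any change-of-basis sending a given $\pi$-flag of $\bA$ to the standard one.

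\emph{Step 2 (constructing $\bA^*$).} Using $n \geq 2$ I will exhibit a specific $\bA^* \in \vv_{\pi}$ whose only $\bA^*$-fixed subspaces are $V_0, V_1, \ldots, V_k$. The idea is to take each diagonal block $\bA^*_{i,i} \in \M_{p_i}^n$ to be simple (trivially when $p_i = 1$, and a generic $n$-tuple otherwise, possible because $n\geq 2$), and so that the $\bA^*_{i,i}$ are pairwise non-isomorphic (e.g.\ by separating their traces). For any $\bA^*$-fixed subspace $U$, the filtration $U \cap V_0 \subseteq U \cap V_1 \subseteq \cdots \subseteq U \cap V_k$ has subquotients that are sub-$n$-matrices of the simple $\bA^*_{i,i}$, hence $0$ or all of $V_i/V_{i-1}$. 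By Krull-Schmidt/Jordan-H\"older and the pairwise non-isomorphism of the $\bA^*_{i,i}$, the composition factors of $U$ are indexed by a unique subset $I \subseteq \{1, \ldots, k\}$, forcing $\dim U = \sum_{i \in I} p_i$. Choosing the strictly upper-triangular blocks generically then forces $I$ to be an \emph{initial segment} $\{1, \ldots, j\}$: indeed, any non-initial $I$ produces, on passing to $\bA^*/V_{i_0-1}$ where $i_0 := \min(\{1,\ldots,k\}\setminus I)$, a splitting of the extension $0 \to V_{i_0}/V_{i_0-1} \to \bA^*/V_{i_0-1} \to \bA^*/V_{i_0} \to 0$ restricted to a non-zero sub-$n$-matrix, and generic choice of $\bA^*_{i,j}$ ($i<j$) destroys all such splittings (since $n \geq 2$, the relevant Ext spaces are nonzero and the generic extension is non-split). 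Hence $U = V_j$ for some $j$.

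\emph{Step 3 (deducing the partition relation).} Given the inclusion $G \cdot \vv_{\pi} \subseteq G \cdot \vv_{\hat{\pi}}$, apply Step 1 to $\hat{\pi}$: the witness $\bA^*$ admits a $\hat{\pi}$-flag $0 = W_0 \subset W_1 \subset \cdots \subset W_{\hat{k}} = \C^p$ with $\dim(W_j/W_{j-1}) = \hat{p}_j$. By Step 2 each $W_j$ equals some $V_{i_j}$, and since dimensions strictly increase, $0 = i_0 < i_1 < \cdots < i_{\hat{k}} = k$. Thus $\hat{p}_j = p_{i_{j-1}+1} + \cdots + p_{i_j}$, so $\hat{\pi}$ is obtained from $\pi$ by merging consecutive blocks, giving $\pi \leq \hat{\pi}$ in $\Pi(p)$ and hence $[\pi] \leq [\hat{\pi}]$ in $[\Pi(p)]$.

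The genuine difficulty concentrates in Step 2: producing $\bA^*$ whose only invariant subspaces are $V_0, \ldots, V_k$. The key leverage is that the hypothesis $n \geq 2$ makes simple blocks abundant and makes the relevant Ext groups between simples large enough that a generic choice of off-diagonal blocks destroys every potentially ``bad'' splitting simultaneously; an incidence-variety/dimension count over the finitely many candidate sets $I$ should complete the argument.
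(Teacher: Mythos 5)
Your proof takes a genuinely different route from the paper's. The paper's proof (which is three lines) picks $\bA \in \vv_\pi$ block-diagonal with simple blocks; $\bA$ is then semisimple so $G\cdot\bA$ is closed (Proposition \ref{closures}(1)), and since $\bA \in G\cdot\vv_{\hat\pi}$ there is $g$ with $g^{-1}\bA \in \vv_{\hat\pi}$. The semisimplification of $g^{-1}\bA$ represents the unique closed orbit in $\overline{G\cdot\bA} = G\cdot\bA$ (Proposition \ref{closures}(2)), hence is isomorphic to $\bA$, and by Jordan--H\"older the composition factors of the two flags match; sorting them gives $\kappa \leq \hat\pi$ for a reordering $\kappa$ of $\pi$, i.e.\ $[\pi]\le[\hat\pi]$. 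No genericity, no off-diagonal data, no flags beyond the semisimplification are needed.

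Your approach instead reinterprets $G\cdot\vv_\pi$ via $\pi$-flags and constructs a witness $\bA^*$ whose fixed-subspace lattice is exactly $\{V_0,\ldots,V_k\}$. This is correct and it buys you more than the paper's Lemma: if it goes through you directly get $\pi \le \hat\pi$ (not merely $[\pi]\le[\hat\pi]$), which the paper proves only later as Corollary \ref{ordercor} using the notion of maximal generality introduced two lemmas on. In fact what you are building in Step 2 is precisely the paper's ``maximally general'' element, and the delicacy you flag there is overstated: you do not need a general-position/incidence-variety count, only the paper's weaker condition that each consecutive $2\times 2$ super-diagonal block $\begin{pmatrix} \bB_{i,i} & \bB_{i,i+1} \\ 0 & \bB_{i+1,i+1} \end{pmatrix}$ be indecomposable. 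Indeed, for a non-initial profile $I$ let $i_0 = \min(\{1,\ldots,k\}\setminus I)$ and $j_0 = \min\{j>i_0 : j\in I\}$; then $U\cap V_{j_0}/V_{i_0-1}$ is a fixed complement to $V_{j_0-1}/V_{i_0-1}$ in $V_{j_0}/V_{i_0-1}$, and pushing the resulting splitting forward along the quotient by $V_{j_0-2}/V_{i_0-1}$ splits the consecutive-pair extension at position $(j_0-1,j_0)$, contradicting indecomposability of that $2\times2$ block. So Step 2 holds by the consecutive-pair criterion alone, which (for $n\ge2$) is achievable because $d_{\bB_{i,i},\bB_{i+1,i+1}}$ fails to be surjective (Proposition \ref{ext}). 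With that filled in, your argument is correct and complete, and rather more muscular than the paper's; it just trades the slick closed-orbit/semisimplification shortcut for an explicit representative and yields the sharper conclusion as a bonus.
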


\begin{proof} By \eqref{dimGVpi} we have $k \geq \hat{k}$. Let $\bA \in \vv_{\pi}$ be $\pi$-block diagonal with simple blocks. Since $\bA \in G \cdot \vv_{\hat{\pi}}$, $\overline{G \cdot \bA}$ contains a unique closed orbit $\overline{G \cdot \hat{\bA}}$ with $\hat{\bA} \in \vv_{\hat{\pi}}$ $\hat{\pi}$-block diagonal. As $\bA$ is semisimple, $G \cdot \bA = G \cdot \hat{\bA}$ and therefore $\pi \leq \sigma(\hat{\pi})$ for some $\sigma \in S_{\hat{k}}$. 
\end{proof}

For the next result we need a new concept. Let $\bB \in \M_p$ and $\bB' \in \M_{p'}$. The map $d_{\bB,\bB'}$ from Proposition \ref{ext} has domain $\M_{p,p'}$ and codomain $\M^n_{p,p'}$, so for $n \geq 2$ cannot be surjective. Therefore by Proposition \ref{ext}, there exists a nontrivial extension of $\bB$ and $\bB'$.

Now let $\bA \in \vv_{\pi}$. We will say that $\bA$ is {\it maximally general} (for $\pi$) if the following hold:

\begin{itemize}
\item The diagonal $\pi$-blocks $\bB_{i,i}$ of $\bA$ are simple and pairwise non-isomorphic;

\item For each $i=1, \ldots, k-1$, the submatrix
\[\bC_i = \begin{pmatrix} \bB_{i,i} & \bB_{i,i+1} \\ 0 & \bB_{i+1,i+1} \end{pmatrix}\]
is indecomposable as a representation of $\GL_{p_i+p_{i+1}}$.
\end{itemize}

It is a consequence of the discussion above that maximally general matrices exist in $\vv_{\pi}$ for arbitrary choices of simple diagonal blocks. Moreover, maximal generality is preserved by the action of $P_{\pi}$.

Maximally general matrices have the following useful property: 
\begin{Lemma}\label{maxgen} Let $\bA \in \vv_{\pi}$ be maximally general. Let $\sigma \in S_k$ be non-trivial. Then the orbit of $\bA$ does not contain any element $\bA' \in \vv_{\sigma(\pi)}$ with diagonal blocks $\bB'_{i,i} = \bB_{\sigma^{-1}(i), \sigma^{-1}(i)}.$
\end{Lemma}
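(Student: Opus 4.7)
The plan is to translate the statement into representation theory. The $\pi$-block-upper-triangular structure on $\bA$ equips $V=\bA$ with a canonical filtration
\[0=W_0\subset W_1\subset\cdots\subset W_k=V,\]
where $W_j$ is spanned by the first $p_1+\cdots+p_j$ coordinates of $\C^p$ and $W_j/W_{j-1}\cong \bB_{j,j}$; similarly any $\bA'\in\vv_{\sigma(\pi)}$ with diagonal blocks $\bB'_{i,i}=\bB_{\sigma^{-1}(i),\sigma^{-1}(i)}$ induces a filtration $W'_\bullet$ on $V'=\bA'$ with $W'_j/W'_{j-1}\cong \bB_{\sigma^{-1}(j),\sigma^{-1}(j)}$. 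If $\bA'$ lay in the orbit of $\bA$ then $V\cong V'$, and via this isomorphism $W'_\bullet$ transports to a second filtration of $V$ by subrepresentations with simple successive quotients. The goal is to rule this out when $\sigma\neq\id$.

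The key structural claim I would prove, by induction on $k$, is that when $\bA$ is maximally general the socle filtration of $V$ coincides with $W_\bullet$. The crux is the base case $\mathrm{soc}(V)=W_1$. Let $S\leq V$ be a simple subrepresentation; then $S\cong \bB_{j,j}$ for a unique $j$, because the $\bB_{i,i}$ are pairwise non-isomorphic simples. If $j>1$ then $S\cap W_{j-1}=0$, since $\bB_{j,j}$ is not a composition factor of $W_{j-1}$. As $\bB_{j,j}$ appears with multiplicity one in $V/W_{j-1}$, the two simple submodules $(S+W_{j-1})/W_{j-1}$ and $W_j/W_{j-1}$ of $V/W_{j-1}$ must coincide, whence $W_j=W_{j-1}\oplus S$. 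Quotienting by $W_{j-2}$ then decomposes $W_j/W_{j-2}\cong \bC_{j-1}$ as $W_{j-1}/W_{j-2}\oplus S$, contradicting the indecomposability of $\bC_{j-1}$ in the definition of maximal generality. Hence $j=1$ and $S=W_1$.

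For the inductive step one checks that $V/W_1$ is itself the representation of a maximally general block-upper-triangular $n$-matrix for the shorter partition $(p_2,\dots,p_k)$: its diagonal blocks are the pairwise non-isomorphic simples $\bB_{2,2},\dots,\bB_{k,k}$, and its consecutive $2\times 2$ blocks are the unchanged $\bC_2,\dots,\bC_{k-1}$. Applying the inductive hypothesis to $V/W_1$ shows that each $W_j$ is the preimage in $V$ of $\mathrm{soc}(V/W_{j-1})$, so $W_\bullet$ is an invariant of the isomorphism class of $V$. Now if $\bA\cong \bA'$ the transported filtration $W'_\bullet$ satisfies $W'_1\subseteq\mathrm{soc}(V)=W_1$, forcing $W'_1=W_1$ by simplicity, and inductively $W'_j=W_j$ for all $j$. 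Comparing successive quotients gives $\bB_{\sigma^{-1}(j),\sigma^{-1}(j)}\cong\bB_{j,j}$ for all $j$, and pairwise non-isomorphism of the $\bB_{i,i}$ forces $\sigma=\id$, contradicting the hypothesis.

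The main obstacle is the socle computation in the second paragraph. This is the one place where both conditions in the definition of maximal generality are used in tandem: pairwise non-isomorphism of the diagonal blocks is needed to count multiplicities and force $W_j=W_{j-1}\oplus S$, while indecomposability of $\bC_{j-1}$ is what turns this decomposition into a contradiction. Once this is in hand, the transfer of the filtration via the isomorphism and the permutation argument are essentially bookkeeping.
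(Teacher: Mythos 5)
Your proof is correct, and it takes a genuinely different route from the paper's. The paper's argument factors the conjugating element $g$ through a Bruhat decomposition $g=h'th$ with $t$ a $(S_{\sigma(\pi)},S_{\pi})$ double-coset representative, then uses simplicity and pairwise non-isomorphism of the diagonal blocks to pin down $t$ as a block permutation, and finally derives a contradiction from a descent $\sigma(i)>\sigma(i+1)$ by observing that a nonzero superdiagonal block of $h\cdot\bA$ would be forced into a forbidden sub-lower-triangular position of $(h')^{-1}\cdot\bA'$. Your argument instead identifies the canonical flag $W_\bullet$ attached to a maximally general $\bA$ as the socle filtration of the underlying representation, and this is where both clauses of maximal generality do exactly the work you describe: multiplicity one of each composition factor and the indecomposability of the consecutive $2\times 2$ blocks $\bC_{j-1}$ rule out any simple submodule other than $W_1$, and the inductive step passes to $V/W_1$ which is again maximally general for the truncated partition. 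Once $W_\bullet$ is intrinsic, the conclusion is pure bookkeeping. The paper's Bruhat argument is shorter to state but relies on group-theoretic machinery and is somewhat terse on why the double-coset representative must respect the block structure; your socle argument is more self-contained, stays inside the representation category, and as a byproduct proves the stronger statement that a maximally general $\bA$ is uniserial with composition series given by the block flag. Either proof is a valid replacement for the other.
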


The proof requires some preparation. Identify the Weyl group $S_p$ of $G$ with the subgroup of permutation matrices. Then $B:= P_{1,1, \ldots, 1}$ is the Borel subgroup with respect to the choice of $\{(1,2), (2,3), \ldots, (p-1,p)\}$ as fundamental roots. For a partition $\pi \in \Pi(p,k)$ we define $\pi_1 = \{1, \ldots, p_1\}$,
\[\pi_i= \{p_1+ \ldots+p_{i-1}+1, \ldots, p_1+ \ldots + p_i\}\]
for $i=2. \ldots, k$ and
\[S_{\pi} = \prod_{i=1}^k S_{\pi_i} \leq S_p.\]
This is the Young subgroup of $S_p$ associated with $\pi$, and we have Bruhat decomposition.
\[P_{\pi} = BS_{\pi}B.\]

\begin{proof}[Proof of Lemma \ref{maxgen}]
Suppose the contrary, and write $g \cdot \bA = \bA' \in \vv_{\pi'}$, where $\pi' = \sigma(\pi)$. Now $G$ has Bruhat decomposition
\[G = \bigcup_{t \in T_{\pi', \pi}} P_{\pi'}tP_{\pi},\] where $T_{\pi',\pi}$ is a set of $S_{\pi'}-S_{\pi}$ double coset representatives in $S_p$. Therefore we may write $g  = h'th$ for some $h' \in P_{\pi'}$, $t \in T_{\pi',\pi}$ and $h \in P_{\pi}$. We therefore see that
\[t \cdot (h \cdot \bA) = (h')^{-1} \cdot \bA'.\]
Now $(h')^{-1} \cdot \bA' \in \vv_{\pi'} $ has diagonal blocks isomorphic to those of $\bA'$. Further,  $h \cdot \bA$ has diagonal blocks isomorphic to those of $\bA$ and is maximally general. In particular, its superdiagonal blocks (i.e. those in position $(i,i+1)$ for some $i=1, \ldots, k-1$) are nonzero.

As the diagonal blocks of $h \cdot \bA$ and $(h')^{-1} \cdot \bA'$ are simple and non-isomorphic, we must have $t(\pi_i) = \pi'_i = \pi_{\sigma(i)}$ for each $i$. Since $\sigma$ is nontrivial, there exists some $1 \leq i <k$ with $\sigma(i)>\sigma(i+1)$. But then the $(\sigma(i), \sigma(i+1))$th $\pi'$-block in $(h')^{-1} \cdot \bA'$ is isomorphic to the nonzero $(i,i+1)$th $\pi$-block in $h \cdot \bA$, contradicting $(h') \cdot \bA' \in \vv_{\pi'}$.  
\end{proof}

\begin{cor}\label{ordercor} We have an inclusion $G \cdot \vv_{\pi} \subseteq G \cdot \vv_{\hat{\pi}}$ if and only if $\pi \leq \hat{\pi}$.
\end{cor}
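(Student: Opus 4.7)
My plan is as follows. The reverse implication is immediate: if $\pi \leq \hat{\pi}$ then $\pi$ is obtained from $\hat{\pi}$ by \emph{order-preserving} successive splittings of blocks, so every $\pi$-block upper triangular $n$-matrix is automatically $\hat{\pi}$-block upper triangular. Hence $\vv_\pi \subseteq \vv_{\hat{\pi}}$ and a fortiori $G\cdot \vv_\pi \subseteq G\cdot \vv_{\hat{\pi}}$. The real content is the forward direction, where Lemma \ref{inclusionwithsimples} only gives $[\pi]\le[\hat{\pi}]$; the task is to upgrade this to the stronger ordered statement $\pi \leq \hat{\pi}$ by ruling out any nontrivial permutation of the parts. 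This is exactly what Lemma \ref{maxgen} was designed for.

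Assume $G\cdot \vv_\pi \subseteq G\cdot \vv_{\hat{\pi}}$ and choose $\bA \in \vv_\pi$ maximally general, with simple pairwise non-isomorphic diagonal blocks $\bB_{1,1},\ldots,\bB_{k,k}$ (such $\bA$ exist by the discussion preceding Lemma \ref{maxgen}). By hypothesis there exists $g\in G$ with $\bA' := g\cdot \bA \in \vv_{\hat{\pi}}$. Each diagonal $\hat{\pi}$-block $\bB'_{i,i}$ of $\bA'$ is an $\hat{p}_i$-dimensional representation, so after conjugating by a suitable $h_i \in \GL_{\hat{p}_i}(\C)$ we may put it in block upper-triangular form with simple diagonal blocks. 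Assembling the $h_i$'s into a block-diagonal element $h \in P_{\hat{\pi}}$ (which preserves $\vv_{\hat{\pi}}$) produces $\bA'' := h\cdot \bA' = hg\cdot \bA$ lying in $\vv_{\pi''}$ for some refinement $\pi''$ of $\hat{\pi}$, with simple diagonal blocks.

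Because $\bA''$ and $\bA$ lie in the same $G$-orbit, their semisimplifications agree: $\bigoplus_i \bB''_{i,i} = \bA_{ss} = \bigoplus_{i=1}^k \bB_{i,i}$. Since the $\bB_{i,i}$ are pairwise non-isomorphic simples, the multiset of diagonal blocks of $\bA''$ coincides with $\{\bB_{1,1},\ldots,\bB_{k,k}\}$; in particular $|\pi''|=k$ and there exists a unique $\tau\in S_k$ with $\bB''_{i,i} = \bB_{\tau(i),\tau(i)}$, so $\pi'' = \tau(\pi)$. Now Lemma \ref{maxgen} (applied with $\sigma = \tau^{-1}$) states that the orbit of $\bA$ contains no element of $\vv_{\tau(\pi)}$ with precisely this arrangement of diagonal blocks unless $\tau$ is trivial. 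We therefore conclude $\tau = \id$, i.e.\ $\pi''=\pi$. Since $\pi''$ is by construction an order-preserving refinement of $\hat{\pi}$, we obtain $\pi \leq \hat{\pi}$, as required.

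The main obstacle I anticipate is the bookkeeping in the refinement step: one must check that the intermediate element $h$ really lies in $P_{\hat{\pi}}$ (so that $\bA''$ remains in $\vv_{\hat{\pi}}$) and that the resulting $\pi''$ is a genuine refinement of $\hat{\pi}$ in the order-preserving sense. The application of Lemma \ref{maxgen} to conclude $\tau=\id$ is then a clean consequence, and the matching of conventions $\pi''=\tau(\pi)$ vs.\ $\bB''_{i,i} = \bB_{\tau(i),\tau(i)}$ just requires care.
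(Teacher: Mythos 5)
Your proposal is correct and follows essentially the same route as the paper: take a maximally general $\bA\in\vv_\pi$, move it into $\vv_{\hat\pi}$, further conjugate by a block-diagonal element of $P_{\hat\pi}$ to expose simple diagonal blocks along a refinement $\pi''\le\hat\pi$, match $\pi''$ with $\pi$ up to a permutation via semisimplification, and then invoke Lemma~\ref{maxgen} to force that permutation to be trivial. The only cosmetic difference is that the paper separates the cases $\hat k<k$ and $\hat k=k$, while you fold them into one argument, and you phrase the application of Lemma~\ref{maxgen} in terms of $\tau^{-1}$ rather than the paper's $\sigma$.
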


\begin{proof} The if direction is clear. For the only if direction, choose $\bA \in \vv_{\pi}$ maximally general and suppose $\bA \in G \cdot \vv_{\hat{\pi}}$. Then there exists $g \in G$ such that $\hat{\bA}:= g \cdot \bA \in \vv_{\hat{\pi}}$. If $|\hat{\pi}| = \hat{k}<k$ then not all $\hat{\pi}$-blocks of $\hat{\bA}$ can be simple. For each $i=1, \ldots, \hat{k}$ we may find a partition $\mu_i$ of $\hat{p}_i$ and an element $h_i \in \GL_{\hat{p}_i}$ such that $h_i \cdot \hat{\bB}_{i,i} \in \vv_{\mu_i}$ with simple diagonal blocks. Let $h = \bigoplus_{i=1}^{\hat{k}} h_i$, and let $\pi' \in \Pi(p,k')$ be the partition of $p$ formed by dividing each $\hat{p}_i$ according to $\mu_i$. Then $\bA':= h \cdot \hat{\bA} \in \vv_{\pi'}$ has simple diagonal $\pi'$-blocks. On the other hand if $k= \hat{k}$ then all $\hat{\pi}$-blocks of $\hat{\bA}$ are simple and we may take $h=1$ and $\pi'=\hat{\pi}$. Notice that $\bA' = (hg \cdot \bA)$, so by uniqueness of representation type we must have $k=k'$ and $\pi' =\sigma(\pi)$ for some $\sigma \in S_k$, and ${\bB'}_{i,i} = \bB_{\sigma^{-1}(i), \sigma^{-1}(i)}$ for all $i$. Applying Lemma \ref{maxgen} we have a contradiction unless $\sigma$ is trivial, so we must have $\pi = \pi' \leq \hat{\pi}$ as required.
\end{proof}

Regarding the spaces $\overline{G^2 \cdot \cc_{\pi, \sigma}}$, we have the following:

\begin{Lemma}\label{reducer}
Let $(\bA, \bA') \in \cc_{\pi, \sigma}$ with both $\bA$ and $\bA'$ maximally general. Suppose $(\bA, \bA') \in \overline{G^2 \cdot \cc_{\hat{\pi}, \hat{\sigma}}}$. Then:
\begin{enumerate}
\item[(a)]  $\pi \leq \hat{\pi}$ and $\sigma(\pi) \leq \hat{\sigma}(\hat{\pi})$. 

\item[(b)] Let $\hat{\bB}_{i,j}$ be the $\hat{\pi}$ blocks of $\bA$ and let $\hat{\bB'}_{i,j}$ be the $\hat{\sigma}(\hat{\pi})$ blocks of $\bA'$. Then
\[(\hat{\bB}_{i,i}, \hat{\bB'}_{\hat{\sigma}(i), \hat{\sigma}(i)}) \in \overline{\GL_{\hat{p}_i}^2 \cdot \cc_{(\hat{p}_i), \id}}\] for all $i=1, \ldots, \hat{k}$.
\end{enumerate}
\end{Lemma}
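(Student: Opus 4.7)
The plan is to unwind the orbit closure via Corollary \ref{speculation}: if $(\bA, \bA') \in \overline{G^2 \cdot \cc_{\hat{\pi}, \hat{\sigma}}}$ then $(\bA, \bA') = (g, g') \cdot (\mathbf{F}, \mathbf{F}')$ for some $(\mathbf{F}, \mathbf{F}') \in \overline{(P_{\hat{\pi}} \times P_{\hat{\sigma}(\hat{\pi})}) \cdot \cc_{\hat{\pi}, \hat{\sigma}}}$, and Lemma \ref{clCpisigma} supplies that $\mathbf{F} \in \vv_{\hat{\pi}}$, $\mathbf{F}' \in \vv_{\hat{\sigma}(\hat{\pi})}$, with diagonal blocks $\mathbf{F}_i$ (the $i$th $\hat{\pi}$-block of $\mathbf{F}$) and $\mathbf{F}'_i$ (the $\hat{\sigma}(i)$th $\hat{\sigma}(\hat{\pi})$-block of $\mathbf{F}'$) satisfying $(\mathbf{F}_i, \mathbf{F}'_i) \in \overline{\GL^2_{\hat{p}_i} \cdot \cc_{(\hat{p}_i), \id}}$. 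Part (a) drops out: $\bA = g \cdot \mathbf{F} \in G \cdot \vv_{\hat{\pi}}$, and the argument in the proof of Corollary \ref{ordercor}, applied to the maximally general $\bA \in \vv_\pi$, forces $\pi \leq \hat{\pi}$; symmetrically $\sigma(\pi) \leq \hat{\sigma}(\hat{\pi})$. Fix the consecutive interval decompositions $I_1, \ldots, I_{\hat{k}}$ and $I'_1, \ldots, I'_{\hat{k}}$ of $\{1, \ldots, k\}$ that witness these two refinements.

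The main new ingredient, which I expect to be the hardest step, is a preliminary: a maximally general $\bA \in \vv_\pi$ is uniserial, with subrepresentations exactly the spaces $W_j = \mathrm{span}(e_1, \ldots, e_{p_1 + \cdots + p_j})$ for $j = 0, 1, \ldots, k$. I would prove this by induction on $k$; the crux of the inductive step is that $\bA$ has a unique simple subrepresentation. Any simple subrep $S$ satisfies either $S = W_1$ or $S \cap W_1 = 0$; in the latter case $S$ embeds into $\bA/W_1$, which is itself maximally general in $\vv_{(p_2, \ldots, p_k)}$ and therefore by induction has unique simple subrep $\bB_{2,2}$, so $S \cong \bB_{2,2}$ and $W_2 = S \oplus W_1$, contradicting indecomposability of the extension $\bC_1$. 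Any proper nonzero subrep $U$ of $\bA$ then contains $W_1$, and $U/W_1$ is a subrep of the uniserial $\bA/W_1$, yielding $U = W_j$ for some $j$.

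With uniseriality in hand, the rest is structural bookkeeping. The $\hat{\pi}$-block filtration $0 = X_0 \subset X_1 \subset \cdots \subset X_{\hat{k}} = \C^p$ of $\mathbf{F}$ maps under $g$ to a chain of $\bA$-invariant subspaces, each of which must coincide with some $W_{j_i}$, and matching dimensions forces $j_i$ to be the right endpoint of $I_i$. Hence $\mathbf{F}_i \cong gX_i/gX_{i-1}$ has composition factors indexed by $I_i$, so $(\mathbf{F}_i)_{ss} \cong \bigoplus_{j \in I_i} \bB_{j,j}$; the analogous argument for $\bA'$ gives $(\mathbf{F}'_i)_{ss} \cong \bigoplus_{j \in I'_{\hat{\sigma}(i)}} \bB'_{j,j} = \bigoplus_{j \in \sigma^{-1}(I'_{\hat{\sigma}(i)})} \bB_{j,j}$, using the relation $\bB'_{\sigma(j), \sigma(j)} = \bB_{j,j}$ that comes from $(\bA, \bA') \in \cc_{\pi, \sigma}$. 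Lemma \ref{clCpisigma} together with Proposition \ref{closures} identifies these two semisimplifications, and Krull--Schmidt (applicable because the $\bB_{j,j}$ are pairwise non-isomorphic by maximal generality of $\bA$) forces the combinatorial identity $\sigma(I_i) = I'_{\hat{\sigma}(i)}$. Consequently $(\hat{\bB}_{i,i})_{ss} \cong \bigoplus_{j \in I_i} \bB_{j,j} \cong \bigoplus_{j \in I'_{\hat{\sigma}(i)}} \bB'_{j,j} \cong (\hat{\bB'}_{\hat{\sigma}(i), \hat{\sigma}(i)})_{ss}$, and one final application of Proposition \ref{closures} yields (b).
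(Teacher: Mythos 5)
Your plan for part (b) takes a genuinely different route from the paper's. The paper invokes Lemma~\ref{maxgen} to argue that the witness $g$ with $g\cdot\bA\in\vv_{\hat{\pi}}$ must lie in $P_{\hat{\pi}}$ (it in fact asserts $g\in P_{\pi}$), and then simply conjugates the diagonal blocks of Lemma~\ref{clCpisigma} by the diagonal blocks of $g^{-1}$. You instead prove a uniseriality lemma for maximally general $\bA$ and trace the $\hat{\pi}$-flag of $\mathbf{F}=g^{-1}\cdot\bA$ through $g$ into the unique composition series of $\bA$. The uniseriality claim and its inductive proof are correct and make the flag structure more explicit; this is a sound alternative in spirit.

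However, the final step of your argument is a genuine gap. Having established $(\hat{\bB}_{i,i})_{ss}\cong(\hat{\bB'}_{\hat{\sigma}(i),\hat{\sigma}(i)})_{ss}$, you appeal to Proposition~\ref{closures} to conclude (b). But equality of semisimplifications is precisely the criterion for the pair $(\hat{\bB}_{i,i},\hat{\bB'}_{\hat{\sigma}(i),\hat{\sigma}(i)})$ to lie in the separating variety $\mathcal{S}_{\GL_{\hat{p}_i},\M^n_{\hat{p}_i}}$. It does not imply membership in $\overline{\GL_{\hat{p}_i}^2\cdot\cc_{(\hat{p}_i),\id}}$, which is only \emph{one} irreducible component of that separating variety, and a proper one whenever $\hat{p}_i\geq 2$ and $n\geq 3$ (see Example~\ref{decomp2}). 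The conclusion of part (b) is strictly stronger than what Proposition~\ref{closures} can deliver.

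The fix is already sitting in your own argument; you simply discarded too much information. You showed $gX_i=W_{j_i}$ with $j_i$ the right endpoint of $I_i$, and because $\pi\leq\hat{\pi}$ the spaces $W_{j_i}$ \emph{are} the coordinate subspaces making up the $\hat{\pi}$-flag, so in fact $gX_i=X_i$ for all $i$, i.e.\ $g\in P_{\hat{\pi}}$. The action of $g$ on the subquotient $X_i/X_{i-1}$ is then an element $g_i\in\GL_{\hat{p}_i}$ with $g_i\cdot\mathbf{F}_i=\hat{\bB}_{i,i}$ --- a genuine isomorphism of representations, not merely of semisimplifications --- and similarly $g'\in P_{\hat{\sigma}(\hat{\pi})}$ yields $g'_{\hat{\sigma}(i)}\cdot\mathbf{F}'_i=\hat{\bB'}_{\hat{\sigma}(i),\hat{\sigma}(i)}$. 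Since $(\mathbf{F}_i,\mathbf{F}'_i)\in\overline{\GL_{\hat{p}_i}^2\cdot\cc_{(\hat{p}_i),\id}}$ by Lemma~\ref{clCpisigma}, and this variety is $\GL_{\hat{p}_i}^2$-stable, applying $(g_i,g'_{\hat{\sigma}(i)})$ gives (b). Replace the final appeal to Proposition~\ref{closures} with this transfer and your proof closes.
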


\begin{proof} (a) follows on applying the argument of Corollary \ref{ordercor} to $\bA$ and $\bA'$. For (b) we note that by Corollary \ref{speculation} there exist $g, g' \in G$ such that $(g \cdot \bA, g' \cdot \bA') \in \overline{(P_{\hat{\pi}} \times P_{\hat{\sigma}(\hat{\pi})}) \cdot \cc_{\hat{\pi},\hat{\sigma}}}$. Let $\tilde{\bB}_{i,j}, \tilde{\bB'}_{i,j}$ denote the $\hat{\pi}$- blocks, resp. $\hat{\sigma}(\hat{\pi})$-blocks of $g \cdot \bA,g' \cdot \bA'$ respectively. By Lemma \ref{clCpisigma} we have $(\tilde{\bB}_{i,i}, \tilde{\bB'}_{\hat{\sigma}(i), \hat{\sigma}(i)}) \in \overline{\GL_2^2 \cdot \cc_{(\hat{p}_i), \id}}$ for all $i=1, \ldots, \hat{k}$. Now as $\bA$ is maximally general for $\pi$, by Lemma \ref{maxgen} we must have $g \in P_{\pi} \leq P_{\hat{\pi}}$. Similarly $\bA'$ is maximally general for $\sigma(\pi)$ and so $g' \in P_{\sigma(\pi)} \leq P_{\hat{\sigma}(\hat{\pi})}$. Let $h_1, h_2, \ldots, h_{\hat{k}}$, resp. $h'_1, \ldots, h'_{\hat{k'}}$ be the $\hat{\pi}$, resp. $\hat{\sigma}(\hat{\pi})$ diagonal blocks of $g^{-1}$ resp. $(g')^{-1}$. Then
\[ (\hat{\bB}_{i,i}, \hat{\bB'}_{\hat{\sigma}(i), \hat{\sigma}(i)})  = (h_i \cdot \tilde{\bB}_{i,i}, h'_i \cdot \tilde{\bB'}_{\hat{\sigma}(i), \hat{\sigma}(i)}) \in  \overline{\GL_{\hat{p}_i}^2 \cdot \cc_{(\hat{p}_i), \id}}.\]
\end{proof}

We may now deduce that the spaces on the right hand side of \eqref{fundamentaldecomp} are distinct:

\begin{cor}\label{distinct}
We have $\overline{G^2 \cdot \cc_{\pi, \sigma}} = \overline{G^2 \cdot \cc_{\hat{\pi}, \hat{\sigma}}}$ if and only if $\pi = \hat{\pi}$ and $\sigma = \hat{\sigma}$.
\end{cor}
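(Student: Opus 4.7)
The if direction is immediate, so the substance is the only if direction. The plan is to apply Lemma \ref{reducer} to maximally general pairs in each side and combine the consequences of parts (a) and (b) of that lemma.

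First I would fix a maximally general pair $(\bA, \bA') \in \cc_{\pi, \sigma}$; such pairs exist since the diagonal blocks can be chosen to be any pairwise non-isomorphic simple representations of the appropriate sizes, and the maximal generality of the off-diagonal data is preserved under $P_{\pi} \times P_{\sigma(\pi)}$. By the assumed equality of closures, $(\bA, \bA') \in \overline{G^2 \cdot \cc_{\hat{\pi}, \hat{\sigma}}}$, so part (a) of Lemma \ref{reducer} gives $\pi \leq \hat{\pi}$ and $\sigma(\pi) \leq \hat{\sigma}(\hat{\pi})$. Then I would swap the roles: choose a maximally general $(\bA_1, \bA_1') \in \cc_{\hat{\pi}, \hat{\sigma}}$ lying in $\overline{G^2 \cdot \cc_{\pi, \sigma}}$ and apply Lemma \ref{reducer}(a) again to obtain $\hat{\pi} \leq \pi$ and $\hat{\sigma}(\hat{\pi}) \leq \sigma(\pi)$. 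Since $\leq$ is a partial order on $\Pi(p)$, this forces $\pi = \hat{\pi}$ and $\sigma(\pi) = \hat{\sigma}(\pi)$ as (ordered) partitions.

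The remaining task is to upgrade the equality $\sigma(\pi) = \hat{\sigma}(\pi)$ to $\sigma = \hat{\sigma}$ in $S_k$. This is where the pairwise non-isomorphic simple diagonal blocks of a maximally general $(\bA, \bA')$ become essential: in the presence of repeated parts in $\pi$, many permutations can produce the same ordered partition, but the simple summands encoded in the diagonal blocks distinguish them. Specifically, I would return to the maximally general $(\bA, \bA') \in \cc_{\pi, \sigma}$ and invoke Lemma \ref{reducer}(b) with $\hat{\pi} = \pi$: for each $i$ we get $(\bB_{i,i}, \bB'_{\hat{\sigma}(i), \hat{\sigma}(i)}) \in \overline{\GL_{p_i}^2 \cdot \cc_{(p_i), \id}}$, i.e.\ these two simple blocks share a semisimplification and are therefore isomorphic. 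Since $(\bA, \bA') \in \cc_{\pi, \sigma}$ gives $\bB'_{j,j} \cong \bB_{\sigma^{-1}(j), \sigma^{-1}(j)}$, we deduce $\bB_{i,i} \cong \bB_{\sigma^{-1}\hat{\sigma}(i), \sigma^{-1}\hat{\sigma}(i)}$, and pairwise non-isomorphism of the diagonal blocks of $\bA$ forces $\sigma^{-1}\hat{\sigma}(i) = i$ for every $i$, hence $\sigma = \hat{\sigma}$.

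I do not expect any serious obstacle here; the structural work has been done in Lemmas \ref{maxgen} and \ref{reducer}. The one subtle point to get right is the symmetry argument: it is tempting to think Lemma \ref{reducer} is asymmetric in the two pairs $(\pi,\sigma)$ and $(\hat{\pi}, \hat{\sigma})$, but it is in fact applicable as soon as one has a maximally general pair in the smaller closure, so running it twice (once with a maximally general pair from each side) is legitimate and yields the reciprocal inequalities needed to pin down $\pi = \hat{\pi}$.
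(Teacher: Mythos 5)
Your argument is correct and follows essentially the same route as the paper, relying on Lemma \ref{reducer} applied to a maximally general pair in $\cc_{\pi,\sigma}$ and then using pairwise non-isomorphism of the simple diagonal blocks to pin down $\sigma = \hat{\sigma}$. The only (harmless) variation is in establishing $\pi = \hat{\pi}$: the paper first invokes Corollary \ref{dimG2Cpisigma} to deduce $|\pi| = |\hat{\pi}|$ from equality of dimensions and then applies Lemma \ref{reducer}(a) once, whereas you apply Lemma \ref{reducer}(a) twice (once from each side of the assumed equality) and conclude $\pi = \hat{\pi}$ by antisymmetry of the partial order, which bypasses the dimension count entirely.
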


\begin{proof} The if direction is clear. Let $\pi \in \Pi(p,k)$ and $\hat{\pi} \in \Pi(p,\hat{k})$ and assume $\overline{G^2 \cdot \cc_{\pi, \sigma}} = \overline{G^2 \cdot \cc_{\hat{\pi}, \hat{\sigma}}}.$ By Corollary \ref{dimG2Cpisigma} we must have $k = \hat{k}$. By Lemma \ref{reducer}(a) we have $\pi = \hat{\pi}$. Now let $(\bA, \bA') \in \cc_{\pi, \sigma}$ such that $\bA$ and $\bA'$ are maximally general, and suppose  $(\bA, \bA') \in \overline{G^2 \cdot \cc_{\pi, \hat{\sigma}}}$. Then 
 \[(\bB_{i,i}, \bB'_{\sigma(i), \sigma(i)}) \in  \overline{\GL_{p_i}^2 \cdot \cc_{(p_i), \id}}\] for all $i$, and
by Lemma \ref{reducer}(b), 
 \[(\bB_{i,i}, \bB'_{\hat{\sigma}(i), \hat{\sigma}(i)}) \in  \overline{\GL_{p_i}^2 \cdot \cc_{(p_i), \id}}.\]
Since the diagonal blocks of $\bA$ and $\bA'$ are simple, their $\GL_{p_i}$-orbits are closed, therefore
 \[\bB_{i,i} \cong \bB'_{\sigma(i), \sigma(i)} \cong \bB'_{\hat{\sigma}(i), \hat{\sigma}(i)}\] for all $i=1, \ldots, k$.
As the diagonal blocks of $\bA'$ are pairwise non-isomorphic, we must have $\sigma = \hat{\sigma}$ as required.
\end{proof}

Suppose  $\overline{G^2 \cdot \cc_{\pi, \sigma}} \subseteq \overline{G^2 \cdot \cc_{\hat{\pi}, \hat{\sigma}}}$. Then in particular every maximally general pair $(\bA,\bA') \in \cc_{\pi, \sigma}$ belongs to $ \overline{G^2 \cdot \cc_{\hat{\pi}, \hat{\sigma}}}$, and so by the above $\pi \leq \hat{\pi}$ and $\sigma(\pi) \leq \hat{\sigma}(\hat{\pi})$, with \[(\hat{\bB}_{i,i}, \hat{\bB'}_{\hat{\sigma}(i), \hat{\sigma}(i)}) \in \overline{\GL_{\hat{p}_i}^2 \cdot \cc_{(\hat{p}_i), \id}}\] for all $i=1, \ldots, \hat{k}$.

Since $\pi \leq \hat{\pi}$, there exists a partition $(k_1,k_2, \ldots, k_{\hat{k}}) \in \Pi(k,\hat{k})$, where $k_i$ is the number of $\pi$-blocks which combine to make the $i$th block of $\hat{\pi}$. That is,
\[\hat{p}_i = \sum_{j=1}^{k_i} p_{k_1+k_2+ \cdots + k_{i-1}+j}.\]

The simple $\pi$-blocks of $\bA$ are $(\bB_{1,1}, \ldots, \bB_{k,k})$. Therefore the simple $\sigma(\pi)$-blocks of $\bA'$ are $$(\bB'_{1,1}, \ldots, \bB'_{k,k}) = (\bB_{\sigma(1),\sigma(1)}, \ldots, \bB_{\sigma(k),\sigma(k)}).$$
On the other hand, the $\hat{\pi}$-blocks of $\bA$ are $(\hat{\bB}_{1,1}, \ldots, \hat{\bB}_{\hat{k},\hat{k}})$, and the simple blocks of $\hat{\bB}_{i,i}$ are $$(\bB_{k_1+\cdots+k_{i-1}+j, k_1+\cdots+k_{i-1}+j}: j=1, \ldots, k_i).$$

Since in particular $(\hat{\bB}_{i,i}, \hat{\bB'}_{\hat{\sigma}(i), \hat{\sigma}(i)}) \in \mathcal{S}_{\GL_{\hat{p}_i},\M^n_{\hat{p}_i}}$, the simple blocks of  $\hat{\bB'}_{\hat{\sigma}(i), \hat{\sigma}(i)}$ must be the same list up to order. So, there exists a permutation $\tau_i \in S_{k_i}$ such that these are $$(\bB_{k_1+\cdots+k_{i-1}+\tau_i(j), k_1+\cdots+k_{i-1}+\tau_i(j)}: j=1, \ldots, k_i).$$

These must be a set of $k_i$ consecutive $\sigma(\pi)$-blocks of $\bA'$. 

Now, given any $j \leq k$ there exists a unique $l \leq \hat{k}$ and $j' \leq k_{\hat{\sigma}(l)}$ such that
\[j = k_{\hat{\sigma}(1)}+ \cdots + k_{\hat{\sigma}(l-1)}+j'.\]
Since the simple blocks of $\bA'$ are pairwise non-isomorphic we have: 
\begin{equation}\label{compatible first} \sigma(j) = k_1+k_2 \ldots + k_{\hat{\sigma}(l)-1}+\tau_{l}(j').\end{equation}
(Here, $\sigma(0)$ is to be interpreted as $0$, so that $j=j'$ if $l=1$.)

We will say the pairs $(\pi, \sigma)$ and $(\hat{\pi}, \hat{\sigma})$ are {\it compatible} if \eqref{compatible first} holds for some choice of $(\tau_1,\tau_2, \ldots, \tau_{\hat{k}}) \in S_{k_1} \times \cdots \times S_{k_{\hat{k}}}.$ Another way to view this notion is as follows: let $(k_1, k_2, \ldots, k_{\hat{k}}) \in \Pi(k,\hat{k})$. There is a natural action of $S_{\hat{k}}$ on $S_{k_1} \times S_{k_2} \cdots \times S_{k_{\hat{k}}}$ by permuting the arguments. Given $\mathbf{\tau} = (\tau_1, \tau_2, \ldots, \tau_{\hat{k}}) \in S_{k_1} \times S_{k_2} \cdots \times S_{k_{\hat{k}}}$ we write $\mathbf{\tau}^{\hat{\sigma}}$ for the result of applying $\hat{\sigma} \in S_{\hat{k}}$ to $\mathbf{\tau}$. We define $\mathbf{\tau}^* \in S_k$ by first defining $\tau'_i \in S_{\{k_1+k_2+\cdots+k_{i-1}+1, \ldots, k_1+k_2+\cdots+k_i\}}$ by
\[\tau'_i(j) = k_1+k_2+ \cdots+k_{i-1}+\tau_i(j)\] and setting $\mathbf{\tau}^* = \tau'_1\tau'_2 \cdots \tau'_{\hat{k}}$. Now let $\pi \leq \hat{\pi}$ and let $\sigma \in S_k$ and $\hat{\sigma} \in S_{\hat{k}}$. Let $k_i$ be the number of blocks of $\pi$ which are combined to make the $i$th block of $\hat{\pi}$.  
Then the pairs $(\pi, \sigma)$ and $(\hat{\pi}, \hat{\sigma})$ are compatible if and only if
\begin{equation}\label{compatible} \sigma = (\mathbf{\tau}^{\hat{\sigma}})^* \end{equation}
for some $\mathbf{\tau} \in S_{k_1} \times S_{k_2} \cdots \times S_{k_{\hat{k}}}$.

In light of the discussion above, we have shown the following:

\begin{prop}\label{compatibility} If $\overline{G^2 \cdot \cc_{\pi, \sigma}} \subseteq \overline{G^2 \cdot \cc_{\hat{\pi}, \hat{\sigma}}}$ then the pairs 
$(\pi, \sigma)$ and $(\hat{\pi}, \hat{\sigma})$ are compatible. Moreover, the converse holds if in addition we have, for all $i=1, \ldots, \hat{k}$,
\[\overline{\GL_{\hat{p}_i}^2 \cdot \cc_{\kappa_i, \tau_i}} \subseteq \overline{\GL_{\hat{p}_i}^2 \cdot \cc_{(\hat{p}_i), \id}}\]
where $\kappa_i \in \Pi(\hat{p}_i, k_i)$ is the partition of $\hat{p}_i$ induced by $\pi$.  
\end{prop}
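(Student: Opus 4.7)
The forward direction is essentially implicit in the discussion leading up to the statement. To spell it out: pick a maximally general $(\bA, \bA') \in \cc_{\pi, \sigma}$. By hypothesis this lies in $\overline{G^2 \cdot \cc_{\hat\pi, \hat\sigma}}$, so Lemma \ref{reducer} delivers $\pi \leq \hat\pi$, $\sigma(\pi) \leq \hat\sigma(\hat\pi)$, together with
\[(\hat{\bB}_{i,i},\hat{\bB}'_{\hat\sigma(i),\hat\sigma(i)}) \in \overline{\GL^2_{\hat p_i} \cdot \cc_{(\hat p_i),\id}}\]
for every $i$. The $\kappa_i$-diagonal simples of $\hat{\bB}_{i,i}$ are $(\bB_{m,m})_{m \in I_i}$ with $I_i = \{k_1 + \cdots + k_{i-1} + 1, \ldots, k_1 + \cdots + k_i\}$, and since pairs in the separating variety share their collection of simple summands up to reordering, the corresponding simples of $\hat{\bB}'_{\hat\sigma(i),\hat\sigma(i)}$ are a reordering of the same list. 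The identifications $\bB_{m,m} = \bB'_{\sigma(m),\sigma(m)}$ from membership in $\cc_{\pi,\sigma}$, together with the pairwise non-isomorphism of these simple blocks, pin this reordering down to a unique permutation $\tau_i \in S_{k_i}$, and the resulting formula is exactly \eqref{compatible first}.

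For the converse, assume compatibility holds via some $(\tau_1, \ldots, \tau_{\hat k})$ and that the block-wise inclusions hold. Take any $(\bA, \bA') \in \cc_{\pi, \sigma}$; the plan is to apply Lemma \ref{clCpisigma} at the coarse level $(\hat\pi, \hat\sigma)$. This requires checking (i) that $\bA \in \vv_{\hat\pi}$ and $\bA' \in \vv_{\hat\sigma(\hat\pi)}$, and (ii) that $(\hat{\bB}_{i,i}, \hat{\bB}'_{\hat\sigma(i), \hat\sigma(i)}) \in \overline{\GL^2_{\hat p_i} \cdot \cc_{(\hat p_i), \id}}$ for every $i$. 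Part (i) is immediate from $\pi \leq \hat\pi$ and $\sigma(\pi) \leq \hat\sigma(\hat\pi)$, which are built into compatibility.

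The real work is in (ii), and the key claim will be that
\[(\hat{\bB}_{i,i}, \hat{\bB}'_{\hat\sigma(i), \hat\sigma(i)}) \in \cc_{\kappa_i, \tau_i}\]
for each $i$; once this is established, the assumed block-wise inclusion $\overline{\GL^2_{\hat p_i} \cdot \cc_{\kappa_i, \tau_i}} \subseteq \overline{\GL^2_{\hat p_i} \cdot \cc_{(\hat p_i), \id}}$ produces exactly the membership needed in (ii). To verify the claim I would observe that since $\bA \in \vv_\pi$, the block $\hat{\bB}_{i,i}$ is automatically $\kappa_i$-block upper triangular with diagonal sub-blocks $(\bB_{m,m})_{m \in I_i}$; an analogous observation applies to $\hat{\bB}'_{\hat\sigma(i), \hat\sigma(i)}$ using $\bA' \in \vv_{\sigma(\pi)}$, and compatibility \eqref{compatible first} is precisely the statement needed to identify its partition shape as $\tau_i(\kappa_i)$ and to match its diagonal sub-blocks with those of $\hat{\bB}_{i,i}$ via $\tau_i$. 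Lemma \ref{clCpisigma} then produces $(\bA, \bA') \in \overline{(P_{\hat\pi} \times P_{\hat\sigma(\hat\pi)}) \cdot \cc_{\hat\pi, \hat\sigma}} \subseteq \overline{G^2 \cdot \cc_{\hat\pi, \hat\sigma}}$, and since $(\bA, \bA')$ was an arbitrary point of $\cc_{\pi, \sigma}$, taking closures gives the desired inclusion. The main obstacle throughout will not be substantive but notational: tracking $\pi$- and $\sigma(\pi)$-refinements simultaneously at the coarse level $(\hat\pi, \hat\sigma(\hat\pi))$ and the fine level $(\kappa_i, \tau_i(\kappa_i))$, which is exactly what the compatibility relation is engineered to make coherent.
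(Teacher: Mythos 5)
Your proof is correct and follows essentially the route the paper takes (Lemma \ref{reducer} for the forward direction, reversed via Lemma \ref{clCpisigma} and Corollary \ref{speculation} for the converse); the paper only sketches the forward direction in the preceding discussion and leaves the converse as an assertion, so your filling in of that step — noting that compatibility exactly says $(\hat{\bB}_{i,i},\hat{\bB}'_{\hat\sigma(i),\hat\sigma(i)}) \in \cc_{\kappa_i,\tau_i}$, whence the hypothesized block-wise inclusion feeds into Lemma \ref{clCpisigma} — is what the author intended.
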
 

We will see in the next section that the additional condition is also necessary.

\section{The Poset $\mathcal{P}_{p,n}$}

In this section let $p>1, n > 1$ and define 
\begin{equation}\label{poset} \mathcal{P}_{p,n} = \bigsqcup_{k=1}^p (\Pi(p,k) \times S_k).\end{equation} 
We may rephrase Equation \eqref{fundamentaldecomp} as follows:

\begin{equation}\label{fundamentaldecompposetversion}
\mathcal{S}_{G,\vv} = \bigcup_{(\sigma, \pi) \in \mathcal{P}_{p,n}} \overline{G^2 \cdot \cc_{\pi,\sigma}}.
\end{equation}

Define a partial order on $\mathcal{P}_{p,n}$ by setting $(\pi, \sigma) \preceq (\hat{\pi},\hat{\sigma})$ if and only if $\overline{G^2 \cdot \cc_{\pi, \sigma}} \subseteq \overline{G^2 \cdot \cc_{\hat{\pi}, \hat{\sigma}}}$. Then $ \overline{G^2 \cdot \cc_{\pi,\sigma}}$ is an irreducible component of $\mathcal{S}_{G,\vv}$ if and only if $(\pi, \sigma)$ is maximal in $\mathcal{P}_{p,n}$. We define the rank of $(\pi, \sigma)$ to be $|\pi|$. The key result is the following:

\begin{Theorem}\label{saturated} The poset $\mathcal{P}_{p,n}$ is antigraded with respect to rank, i.e. if $(\pi, \sigma) \preceq (\hat{\pi}, \hat{\sigma})$ then there is a chain 
\[(\pi, \sigma) = (\pi_0, \sigma_0) \preceq (\pi_1,\sigma_1) \preceq \cdots \preceq (\pi_{k-\hat{k}}, \sigma_{k - \hat{k}}) = (\hat{\pi}, \hat{\sigma})\]
with $|\pi_i| = k-i$.
\end{Theorem}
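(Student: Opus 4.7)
The plan is to induct on the rank drop $d := k - \hat k$, with a secondary (outer) induction on $p$. The base case $d = 0$ is immediate: Lemma \ref{inclusionwithsimples} forces $\pi = \hat\pi$, and then Corollary \ref{distinct} combined with the dimension equality of Corollary \ref{dimG2Cpisigma} rules out strict inclusion between two irreducible closures of equal dimension, so $\sigma = \hat\sigma$ and the chain is trivial. For $d \geq 1$ it suffices to exhibit an intermediate $(\pi', \sigma') \in \mathcal{P}_{p,n}$ of rank $k-1$ with $(\pi, \sigma) \preceq (\pi', \sigma') \preceq (\hat\pi, \hat\sigma)$, since applying the inductive hypothesis to the second relation (rank drop $d-1$) completes the chain.

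To construct the intermediate, I would apply Proposition \ref{compatibility} to unpack $(\pi, \sigma) \preceq (\hat\pi, \hat\sigma)$ into its compatibility data: a partition $(k_1, \ldots, k_{\hat k}) \in \Pi(k, \hat k)$ recording how consecutive $\pi$-blocks group into $\hat\pi$-blocks, permutations $\tau_i \in S_{k_i}$ governing \eqref{compatible first}, and the inner-block relations $(\kappa_i, \tau_i) \preceq ((\hat p_i), \id)$ in $\mathcal{P}_{\hat p_i, n}$. Since $k > \hat k$, some $k_{i_0} \geq 2$. When $\hat k \geq 2$, we have $\hat p_{i_0} < p$, so the outer induction on $p$ supplies a chain for $(\kappa_{i_0}, \tau_{i_0}) \preceq ((\hat p_{i_0}), \id)$ whose first step is a rank-$(k_{i_0}-1)$ intermediate $(\kappa'_{i_0}, \tau'_{i_0})$. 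Reassembling the global data by substituting $(\kappa'_{i_0}, \tau'_{i_0})$ for $(\kappa_{i_0}, \tau_{i_0})$ and leaving the other blocks untouched defines $(\pi', \sigma')$; Proposition \ref{compatibility} then certifies both $(\pi, \sigma) \preceq (\pi', \sigma')$ (compatibility localises to block $i_0$ via the smaller relation $(\kappa_{i_0}, \tau_{i_0}) \preceq (\kappa'_{i_0}, \tau'_{i_0})$) and $(\pi', \sigma') \preceq (\hat\pi, \hat\sigma)$ (via the tail of the inner chain together with the unchanged data on the other blocks).

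The main obstacle is the base case $\hat k = 1$, where $\hat\pi = (p)$ and $\hat p_1 = p$, so induction on $p$ is unavailable. Here I would argue directly: from $(\pi, \sigma) \preceq ((p), \id)$ with $k \geq 2$, the maximal-generality analysis of Lemma \ref{reducer} should furnish a pair of adjacent $\pi$-blocks whose $\sigma$-images are consecutive in $\sigma(\pi)$, enabling an elementary merge; the resulting $(\pi', \sigma')$ of rank $k-1$ is then shown to still lie in the graph closure by combining the parabolic deformation argument from the proof of Lemma \ref{clCpisigma} with Lemma \ref{harm}. This $\hat k = 1$ case is the delicate step of the argument, since one must bridge the global geometry of the graph closure and the purely combinatorial compatibility relations without appealing to a smaller base.
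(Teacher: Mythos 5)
Your overall strategy---reduce to the case $\hat{k}=1$ and then assemble block-by-block---does mirror the paper's proof, and your base case $d=0$ is essentially right (Lemma \ref{reducer}(a) gives $\pi \le \hat\pi$ with $|\pi|=|\hat\pi|$ forcing $\pi = \hat\pi$, and then the dimension count plus Corollary \ref{distinct} forces $\sigma = \hat\sigma$). The assembly step for $\hat{k}\ge 2$ is also workable, and is essentially the unwinding argument the paper uses. But the heart of the theorem is precisely the step you flag as ``delicate,'' and your proposal does not actually resolve it.

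There are two linked gaps. First, the inner-block relations $(\kappa_i,\tau_i)\preceq((\hat p_i),\id)$ are not part of Proposition \ref{compatibility}; Proposition \ref{compatibility} asserts compatibility and says the converse holds \emph{if in addition} the block relations hold. The necessity of the block relations is Proposition \ref{compatconverse}, and its proof runs through Lemma \ref{supermax}. Lemma \ref{supermax} is essentially the pointwise form of the $\hat k=1$ case you are trying to establish as your base. So if you invoke those block relations, you have quietly assumed the hard case. To keep the logic non-circular you would need to prove the block-relation necessity independently, which is what the paper's ``supermaximally general'' apparatus is for.

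Second, your proposed direct argument for $\hat k=1$ does not work as described. Lemma \ref{reducer} with $\hat\pi=(p)$, $\hat\sigma=\id$ gives only $\pi\le(p)$ and $\sigma(\pi)\le(p)$ (which is automatic) together with a single global membership statement $(\bA,\bA')\in\overline{\GL_p^2\cdot\cc_{(p),\id}}$; it does not furnish a pair of adjacent $\pi$-blocks with consecutive $\sigma$-images, and indeed the existence of such a substring $[l+1,l]$ is equivalent to $\sigma$ being a partial reversal, which is part of what the theorem is ultimately being used to prove (Proposition \ref{nonmax}). The paper instead introduces supermaximally general pairs (a condition strictly stronger than maximal generality of both components, ruling out accidental ``degenerations'' at superdiagonal sub-blocks) and proves the boundary stratification $\overline{G^2\cdot\cc_{(p),\id}} = G^2\cdot\cc_{(p),\id} \cup \bigcup_{\pi\in\Pi(p,2)}\overline{G^2\cdot\cc_{\pi,(12)}}$ (Corollary \ref{boundaryofCp}, which in turn rests on the Iwahori-style torus reduction Lemma \ref{iwahori}). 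A supermaximally general pair not in the open orbit must then land in a codimension-one boundary piece $\overline{G^2\cdot\cc_{(p_1,p_2),(12)}}$, and an induction on $k$ peels off one rank at a time. None of this is captured by ``combining the parabolic deformation argument from the proof of Lemma \ref{clCpisigma} with Lemma \ref{harm}.'' Without Corollary \ref{boundaryofCp} and supermaximal generality, you have no mechanism to produce the rank-$(k-1)$ intermediate in the $\hat k=1$ case, and the induction has no base to stand on.
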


The proof of this result is quite involved and will take up the rest of this section. We begin by showing that some elements of $\mathcal{P}_{p,n}$ are not maximal. 

\begin{Lemma}\label{inclusions}
Let $\pi:=(p_1,p_2) \in \Pi(p,2)$ and let $\sigma = (12) \in S_2$. Then $(\pi, (12)) \preceq ((p), \id)$.
\end{Lemma}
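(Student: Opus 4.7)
The statement $(\pi,(12)) \preceq ((p),\id)$ unfolds to $\overline{G^2\cdot\cc_{\pi,(12)}} \subseteq \overline{G^2\cdot\cc_{(p),\id}}$, and the right-hand side is exactly the closure $\overline{\Gamma_{G,\vv}}$ of the graph of the $G$-action. Since $\Gamma_{G,\vv}$, and hence its closure, is automatically $G^2$-stable, I plan to reduce to the cleaner inclusion $\cc_{\pi,(12)} \subseteq \overline{\Gamma_{G,\vv}}$. So I would fix an arbitrary
$(\bA,\bA') = \left(\begin{pmatrix} B_1 & U \\ 0 & B_2 \end{pmatrix},\ \begin{pmatrix} B_2 & V \\ 0 & B_1 \end{pmatrix}\right) \in \cc_{\pi,(12)}$
and try to realise $(\bA,\bA')$ as a limit of pairs $(\bM(t),\, g(t)\bM(t)g(t)^{-1})$ with $\bM(t)\in\vv$ and $g(t)\in G$ defined for $t$ in a punctured neighbourhood of $0\in\C$.

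\textbf{Guessing the curve.} The diagonal blocks of $\bA$ and $\bA'$ are swapped, so $g(t)$ should look like a block-swap permutation matrix. A constant block-swap alone only brings $\bA$ to $\bA'_{ss}$, missing the upper-right block $V$; so I would twist the swap by a power of $t$ and introduce a compensating lower-left entry $tV$ in $\bM(t)$, trying
$g(t) = \begin{pmatrix} 0 & I_{p_2} \\ t\,I_{p_1} & 0 \end{pmatrix}, \qquad \bM(t) = \begin{pmatrix} B_1 & U \\ tV & B_2 \end{pmatrix}.$
Since $\det g(t) = \pm\,t^{p_1}$, we have $g(t)\in G$ for $t\in\C^*$ and hence $(\bM(t),\, g(t)\bM(t)g(t)^{-1})\in\Gamma_{G,\vv}$ on $\C^*$. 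The remaining step is the routine block-matrix computation, which I expect to yield $g(t)\bM(t)g(t)^{-1} = \begin{pmatrix} B_2 & V \\ tU & B_1 \end{pmatrix}$, so that letting $t\to 0$ simultaneously gives $\bM(t)\to\bA$ and $g(t)\bM(t)g(t)^{-1}\to\bA'$; closedness of $\overline{\Gamma_{G,\vv}}$ finishes the argument.

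\textbf{Main obstacle.} Once $g(t)$ is found everything is block multiplication, so the entire proof depends on making the right guess. The clever point is that the single parameter $t$ is forced to play two compatible roles at once: it damps the artificial lower-left block of $\bM(t)$ (keeping $\bM(t)$ convergent to $\bA$), while under conjugation that same $tV$ cancels the factor $t^{-1}$ appearing in $g(t)^{-1}$ to deliver the prescribed upper-right block $V$ of $\bA'$, and symmetrically the original $U$ block of $\bA$ gets pushed into a harmless $tU$ which vanishes in the limit. I do not anticipate any further obstruction.
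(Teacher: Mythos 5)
Your proof is correct and takes essentially the same approach as the paper: both set $\bM(t) = \bigl(\begin{smallmatrix} B_1 & U \\ tV & B_2 \end{smallmatrix}\bigr)$ and conjugate by a one-parameter family so that the $t$-scaled lower-left block produces $V$ in the limit. The only cosmetic difference is that the paper's $g(t)$ is the diagonal matrix $tI_{p_1}\oplus I_{p_2}$, so their limit is $s_{\pi,(12)}\cdot\bA'$ and they then invoke $G^2$-invariance of $\overline{G^2\cdot\cc_{(p),\id}}$; your $g(t)$ is exactly $s_{\pi,(12)}^{-1}(tI_{p_1}\oplus I_{p_2})$, absorbing that final permutation into the curve and producing $\bA'$ directly.
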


For the proof we need some additional notation.  For a pair $\pi \in \Pi(p,k), \sigma \in S_k$ we let $s_{\pi,\sigma}$ denote the $(\pi, \sigma(\pi))$-block matrix with blocks
\begin{equation}\label{defs} t_{i,j} = \left\{ \begin{array}{lr} I_{p_i} & \ j = \sigma(i) \\ 0 & \ \text{otherwise.} \end{array} \right.\end{equation}

\begin{proof} It is enough to show that  $\cc_{\pi, \sigma} \subseteq \overline{G^2 \cdot \cc_{(p), \id}}$. Let $(\bA,\bA') \in \cc_{\pi, \sigma}$. We write
\[\bA = \begin{pmatrix} \bB_{1,1} & \bB_{1,2} \\ 0 & \bB_{2,2} \end{pmatrix} \in \vv_{\pi}, \bA' = \begin{pmatrix} \bB_{2,2} & \bB'_{1,2} \\ 0 & \bB_{1,1} \end{pmatrix} \in \vv_{\sigma(\pi)}.\]
For each $t \in \C^*$ we define 
\[ \bA(t) = \begin{pmatrix} \bB_{1,1} & \bB_{1,2} \\ t \bB'_{1,2} & \bB_{2,2} \end{pmatrix},\]  and also $g(t) = t I_{p_1} \oplus I_{p_2} \in \GL_{p}$. We note that $\lim_{t \rightarrow 0} \bA(t) = \bA$ and that $\lim_{t \rightarrow 0} g(t) \cdot \bA(t) = s_{\pi, \sigma} \cdot \bA'$. Therefore $$(\bA,s_{\pi, \sigma} \cdot \bA') \in \overline{\GL^2_{p} \cdot \cc_{(p), \id}},$$ implying $(\bA,\bA') \in \overline{\GL^2_{p} \cdot \cc_{(p), \id}}$ as required. 
\end{proof}

For $\sigma \in S_k$ we will use square brackets for the description of $\sigma$ in array notation omitting the first row, so we will write
\[\sigma = [\sigma(1), \sigma(2), \ldots, \sigma(k)],\]
while parentheses will be used to express permutations in disjoint cycle notation.
For later use we note the following consequence of the lemma above:

\begin{cor}\label{reverses} Let $\pi \in \Pi(p,k)$ and let $\sigma = [k,k-1,\ldots,1]$. Then $(\pi, \sigma) \preceq ((p), \id)$.
\end{cor}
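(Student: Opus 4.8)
The plan is to reduce Corollary \ref{reverses} to Lemma \ref{inclusions} by an induction on $k$, the number of blocks of $\pi$. The reversal permutation $[k,k-1,\ldots,1]$ should be factored through the merge operations that define the partial order on $\mathcal{P}_{p,n}$: the point is that reversing a sequence of $k$ blocks can be achieved by first reversing the $(12)$-type transposition on the outermost pair of blocks (which Lemma \ref{inclusions} handles, after incorporating the inner blocks into one of the two outer ones) and then reversing what remains. Concretely, I would write $\pi = (p_1,\ldots,p_k)$ and consider the coarsening $\hat\pi = (p_1, p_2+p_3+\cdots+p_k)$ or, more symmetrically, $\hat\pi = (p_1+\cdots+p_{k-1}, p_k)$, depending on which is easier to iterate.

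First I would set up the inductive step carefully. Suppose the result holds for all partitions with fewer than $k$ parts. Given $\pi \in \Pi(p,k)$ with $\sigma = [k,k-1,\ldots,1]$, let $\hat\pi = (p_1+p_2+\cdots+p_{k-1}, p_k) \in \Pi(p,2)$, so that $\pi \leq \hat\pi$ with $k_1 = k-1$ blocks merging into the first block and $k_2 = 1$ block (namely $p_k$) forming the second. The reversal $\sigma$ is exactly the compatible lift of the reversal $\hat\sigma = (12) \in S_2$ together with the reversal $\tau_1 = [k-1,k-2,\ldots,1] \in S_{k-1}$ on the first group: checking \eqref{compatible}, applying $\hat\sigma = (12)$ swaps the two groups, and then $\tau_1$ reverses the first (now second) group; the resulting element of $S_k$ sends $j \mapsto k+1-j$, i.e. it is precisely $\sigma$. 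Now Proposition \ref{compatibility} says that $\overline{G^2 \cdot \cc_{\pi,\sigma}} \subseteq \overline{G^2 \cdot \cc_{\hat\pi,\hat\sigma}}$ \emph{provided} that the corresponding inclusion holds in the smaller matrix sizes, namely $\overline{\GL_{\hat p_1}^2 \cdot \cc_{\kappa_1,\tau_1}} \subseteq \overline{\GL_{\hat p_1}^2 \cdot \cc_{(\hat p_1),\id}}$ where $\kappa_1 = (p_1,\ldots,p_{k-1}) \in \Pi(\hat p_1, k-1)$ and $\tau_1$ is the reversal in $S_{k-1}$; the second factor is trivial since $k_2=1$. That inclusion is exactly the inductive hypothesis (Corollary \ref{reverses} for $\hat p_1$ and $k-1$ parts). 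Finally $\overline{G^2 \cdot \cc_{\hat\pi,\hat\sigma}} = \overline{G^2 \cdot \cc_{(p_1+\cdots+p_{k-1},\,p_k),\,(12)}} \subseteq \overline{G^2 \cdot \cc_{(p),\id}}$ by Lemma \ref{inclusions}, and composing the two inclusions gives $(\pi,\sigma) \preceq ((p),\id)$. The base case $k=1$ is trivial ($\sigma = \id$) and $k=2$ is Lemma \ref{inclusions} itself.

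The main obstacle I anticipate is the bookkeeping in verifying that the reversal $[k,\ldots,1]$ really is the compatible lift $(\mathbf{\tau}^{\hat\sigma})^*$ for the choice $\hat\sigma = (12)$, $\tau_1 = [k-1,\ldots,1]$, $\tau_2 = \id$ — one has to unwind the definitions of $\mathbf{\tau}^{\hat\sigma}$, the $\tau'_i$, and $\mathbf{\tau}^*$ and confirm the indices line up, taking care with the convention $\sigma(0)=0$. A secondary subtlety is making sure Proposition \ref{compatibility}'s converse is genuinely applicable here: its hypothesis requires the inclusion in each smaller block, and with $\hat k = 2$, $k_2 = 1$ there is only one nontrivial block condition, matching the inductive hypothesis precisely, so no gap arises. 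Everything else is a direct assembly of results already in the excerpt. An alternative, more hands-on route would bypass Proposition \ref{compatibility} and instead mimic the explicit one-parameter-subgroup degeneration in the proof of Lemma \ref{inclusions}, conjugating by $g(t) = t I_{p_1+\cdots+p_{k-1}} \oplus I_{p_k}$ and iterating; this would also work but duplicates effort, so I would prefer the structural argument via compatibility.
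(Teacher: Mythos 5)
Your proof is correct and relies on exactly the same two ingredients as the paper's: Lemma \ref{inclusions} and Proposition \ref{compatibility}. The organization differs slightly. The paper constructs a saturated chain explicitly, merging the two leftmost blocks at each step: $\pi_i = (p_1+\cdots+p_{i+1}, p_{i+2}, \ldots, p_k)$ and $\sigma_i = [k-i, \ldots, 1]$, with each link $(\pi_i,\sigma_i)\preceq(\pi_{i+1},\sigma_{i+1})$ justified by applying Lemma \ref{inclusions} to the merged pair and then Proposition \ref{compatibility} to lift. You instead run an induction on $k$ and make a single Proposition \ref{compatibility} jump from $(\pi,\sigma)$ to $\bigl((p_1+\cdots+p_{k-1},p_k),(12)\bigr)$, with the inner inclusion supplied by the inductive hypothesis, finishing with Lemma \ref{inclusions}. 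Both are sound; your verification of the compatibility formula $\sigma=(\mathbf{\tau}^{\hat\sigma})^*$ with $\mathbf{\tau}=([k-1,\ldots,1],\id)$ and $\hat\sigma=(12)$ is correct, and your application of Proposition \ref{compatibility}'s converse is legitimate since only the $i=1$ block imposes a nontrivial condition and that is exactly the inductive hypothesis. The one advantage of the paper's formulation is that it exhibits a saturated chain of one-rank merges; this is the form that is actually quoted and reused in the proofs of Theorem \ref{saturated}, Proposition \ref{decompCpid} and Corollary \ref{boundaryofCp}. Your version proves the stated corollary but would need a small extra observation (namely, unwinding the induction) if one wanted the saturated chain itself.
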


\begin{proof} For each $i=0, \ldots, k-1$ let $\pi_i = (p_1+p_2+ \cdots +p_{i+1},p_{i+2},\ldots, p_k) \in \Pi(p,k-i)$ and $\sigma_i = [k-i,k-i-1, \ldots, 1] \in S_{k-i}$. Then for each $i$ we have $\pi_i \leq \pi_{i+1}$ and $\sigma_{i} = (((12), \id, \id, \ldots, \id)^{\sigma_{i+1}})^*$. Further, by Lemma \ref{inclusions} we have $((p_1+p_2+ \cdots + p_{i+1},p_{i+2}),(12)) \preceq ((p_1+\cdots+p_{i+2}, \id)$. Therefore by by Proposition \ref{compatibility} we have inclusions $(\pi_i, \sigma_i) \preceq (\pi_{i+1}, \sigma_{i+1})$ for all $i=0, \ldots, k-2$. Therefore we have chain of inclusions
\[(\pi, \sigma) = (\pi_0, \sigma_0) \preceq (\pi_1,\sigma_1) \preceq \cdots \preceq (\pi_{k-1}, \sigma_{k - 1}) = ((p), \id).\]
\end{proof} 

Our next aim in this section is to show, with one exception, that the inclusions described in Lemma \ref{inclusions} give the only codimension 1 subvarieties in \eqref{fundamentaldecomp} contained in the graph closure.

\begin{prop}\label{newrkcondition} Let $(\bA,\bA') \in \cc_{(p_1,p_2), \id}$. Denote the $(p_1,p_2)$ blocks of $\bA$ and $\bA'$ by $\bB_{i,j}$ and $\bB'_{i,j}$ respectively and note that the diagonal blocks are identical, i.e. $\bB_{1,1}=\bB'_{1,1}$ and $\bB_{2,2}=\bB'_{2,2}$. Assume these blocks are simple. 
 Then the following are equivalent:

\begin{enumerate}
\item[(i)] $(\bA,\bA') \in \overline{G^2 \cdot \cc_{(p), \id}}$;
\item[(ii)] There exist $w,z \in \C$ with $w\bB_{1,2}+z\bB'_{1,2} \in \im(d_{\bB_{1,1}, \bB_{2,2}})$.
\item[(iii)] There exist $X \in \M_{p_1,p_2}$ and $w,z \in \C$, not all zero, such that $w\bB_{1,2}+z\bB'_{1,2} = X\bB_{1,1}-\bB_{2,2}X$.
\end{enumerate}

\end{prop}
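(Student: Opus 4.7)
The plan is to prove the three conditions are equivalent in three steps: (ii)$\Leftrightarrow$(iii) by direct rewriting, (iii)$\Rightarrow$(i) by a case analysis on $(w,z)$, and (i)$\Rightarrow$(iii) via the valuative criterion. For (ii)$\Leftrightarrow$(iii), note that $\im(d_{\bB_{1,1},\bB_{2,2}})=\{X\bB_{1,1}-\bB_{2,2}X : X\in\M_{p_1,p_2}\}$, so the two conditions coincide under the implicit non-triviality of $(w,z)$; the only remaining sub-case of (iii), namely $w=z=0$ with $X\neq 0$, forces $X$ to be a nonzero homomorphism between the simple blocks, hence $\bB_{1,1}\cong\bB_{2,2}$ by Schur.

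For (iii)$\Rightarrow$(i), I would proceed by cases on $(w,z)$. If both are nonzero, rescale $w=1$: the relation $\bB_{1,2}+z\bB'_{1,2}\in\im(d)$ makes $\bA$ $G$-conjugate to $\bA_{-z}:=\bigl(\begin{smallmatrix}\bB_{1,1}&-z\bB'_{1,2}\\0&\bB_{2,2}\end{smallmatrix}\bigr)$ via Proposition~\ref{ext}, and conjugating $\bA'$ by $\mathrm{diag}(-zI_{p_1},I_{p_2})$ also yields $\bA_{-z}$, so the pair lies in the graph itself. If exactly one of $w,z$ vanishes, one of $\bA,\bA'$ becomes $G$-conjugate to $\bA_{ss}$, while the scaling family $g(t)=tI_{p_1}\oplus I_{p_2}$ applied to the other produces the degeneration $g(t)\bA'g(t)^{-1}\to \bA_{ss}$ (or the analogous degeneration of $\bA$), placing $(\bA,\bA')$ in the graph closure by $G^2$-invariance and the swap-symmetry of the closure. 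In the remaining sub-case $w=z=0$ (which forces $\bB_{1,1}\cong\bB_{2,2}$), after equalising the blocks the family $\bA(t)=\bigl(\begin{smallmatrix}\bB_{1,1}&\bB_{1,2}\\t\bB'_{1,2}&\bB_{2,2}\end{smallmatrix}\bigr)$ with $g(t)=tI_{p_1}\oplus I_{p_2}$ from the proof of Lemma~\ref{inclusions} degenerates to a block-lower triangular limit that the block-swap matrix conjugates to $\bA'$.

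The hard direction (i)$\Rightarrow$(iii) requires a valuative-criterion argument. Lift $(\bA,\bA')\in\overline{G^2\cdot\cc_{(p),\id}}$ to a curve $(\bA(t),g(t)\bA(t)g(t)^{-1})\to(\bA,\bA')$ with $\bA(t)\in \vv(\C[[t]])$ and $g(t)\in G(\C((t)))$, and write $g(t)=k_1(t)\lambda(t)k_2(t)$ in Cartan form with $k_i\in G(\C[[t]])$ and $\lambda(t)=\mathrm{diag}(t^{a_1}I_{r_1},\ldots,t^{a_m}I_{r_m})$ dominant. The simplicity of the $(p_1,p_2)$-block diagonals of $\bA$ and $\bA'$, combined with uniqueness of semisimplification (Proposition~\ref{closures}(3)) applied to the limit of $\lambda(t)(k_2(t)\bA(t)k_2(t)^{-1})\lambda(t)^{-1}$, will constrain $\nu:=(r_1,\ldots,r_m)$ to equal $(p)$, $(p_1,p_2)$, or (when $\bB_{1,1}\cong\bB_{2,2}$) $(p_2,p_1)$, and will further force $k_i(0)$ to preserve or swap the $(p_1,p_2)$-block structure. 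In the case $\nu=(p)$ the pair is already $G$-conjugate (so lies in the graph), and (iii) with both $w,z\neq 0$ comes from Schur applied to the conjugator $h\in P_{(p_1,p_2)}$ sending $\bA$ to $\bA'$: its scalar diagonal blocks and unipotent $(1,2)$-entry yield the required identity. In the case $\nu=(p_1,p_2)$, direct analysis of the 1-ps degeneration yields $\bB'_{1,2}\in\im(d)$ (or $\bB_{1,2}\in\im(d)$), giving (iii) with exactly one of $w,z$ zero, while the swap case yields the $w=z=0$ instance. The main obstacle will be carefully tracking how $k_1(0)$ and $k_2(0)$ interact with the prescribed $(p_1,p_2)$-block structure, so that the reduction to a one-parameter subgroup is compatible with the block form in which (iii) is stated.
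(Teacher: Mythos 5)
Your steps for (ii)$\Leftrightarrow$(iii) and (iii)$\Rightarrow$(i) match the paper's proof in substance: the same rewriting of $\im(d)$ and the same case analysis on $(w,z)$, with the trivial-extension argument when both are nonzero, the degeneration of one representation to its semisimplification when exactly one vanishes, and the $\bB_{1,1}\cong\bB_{2,2}$ swap argument when both vanish. These parts are fine.

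Where you diverge sharply is (i)$\Rightarrow$(iii). You propose a valuative-criterion argument with a Cartan--Iwahori--Matsumoto decomposition $g(t)=k_1(t)\lambda(t)k_2(t)$, followed by an analysis of how $k_1(0),k_2(0)$ interact with the $(p_1,p_2)$ block structure — you explicitly flag this last step as the ``main obstacle,'' and it is indeed nontrivial: the Cartan factor $\lambda(t)$ naturally comes in dominant order adapted to a torus that has no a priori compatibility with the given block partition, so controlling $k_i(0)$ enough to land in $P_{(p_1,p_2)}$ or the block-swap parabolic is real work. The paper sidesteps all of this with a much cheaper observation: if $(\bA,\bA')\in G^2\cdot\cc_{(p),\id}$ then there is a nonzero $g\in\M_p$ with $g\bA=\bA'g$, and the condition ``$\exists\,g\neq 0$ intertwining $\bA$ and $\bA'$'' is \emph{closed} in $\vv^2$ (it is a rank-drop condition on the linear map $g\mapsto(gA_i-A'_ig)_i$). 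So the intertwiner condition passes automatically to the closure $\overline{G^2\cdot\cc_{(p),\id}}$, and the rest is a short block-by-block application of Schur's lemma using the simplicity of the diagonal blocks. This is the insight your sketch is missing. Your route is likely salvageable along the lines of the paper's Lemma~\ref{iwahori} (which is exactly this kind of Cartan argument), but as written it is incomplete, and the elementary closed-condition trick renders the machinery unnecessary for this particular statement.
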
 

\begin{proof}
(i) $\Rightarrow$ (iii): Suppose first that $(\bA,\bA') \in G^2 \cdot \cc_{(p), \id}$. Then then exists $g \in G$ with $g \cdot \bA = \bA'$. In particular there exists a nonzero $g \in \M_{p}$ with $g \bA = \bA'g$. Since this is a closed condition, it also holds whenever  $(\bA,\bA') \in \overline{G^2 \cdot \cc_{(p), \id}}$. So assume this holds. Let $g_{i,j}$ denote the $(p_1,p_2)$ blocks of $g$, then we have 

\begin{align*}
g_{1,1} \bB_{1,1} - \bB_{1,1}g_{1,1} - \bB'_{1,2}g_{2,1} &= 0;\\
g_{1,1} \bB_{1,2} + g_{1,2}\bB_{2,2} - \bB_{1,1}g_{1,2} - \bB'_{1,2}g_{2,2} &= 0;\\
g_{2,1}\bB_{1,1} - \bB_{2,2,}g_{2,1} &= 0;\\
g_{2,1} \bB_{1,2}+g_{2,2}\bB_{2,2}-\bB_{2,2}g_{2,2} &=0.
\end{align*} 

Since $\bB_{1,1}$ and $\bB_{2,2}$ are simple, the third equation and Schur's Lemma show that either $g_{2,1}$ is invertible and $\bB_{1,1} \cong \bB_{2,2}$, or $g_{2,1}=0$. In the former case we must have $p_1=p_2$, and we obtain (iii) immediately by setting $w=z=0$ and $X = g^{-1}_{2,1}$. In the latter case our equations become

\begin{align*}
g_{1,1} \bB_{1,1} - \bB_{1,1}g_{1,1}&= 0;\\
g_{1,1} \bB_{1,2} + g_{1,2}\bB_{2,2} - \bB_{1,1}g_{1,2} - \bB'_{1,2}g_{2,2} &= 0;\\
g_{2,2}\bB_{2,2}-\bB_{2,2}g_{2,2} &=0.
\end{align*} 

As $\bB_{1,1}$ and $\bB_{2,2}$ are simple we must, again by Schur's Lemma, have $g_{1,1}=wI_{p_1}$ and $g_{2,2}=zI_{p_2}$ for some $w,z \in \C$. Therefore the second equation implies (iii) holds with $X = g_{1,2}$.

(iii) $\Leftrightarrow$ (ii): This is clear from the definition of $d_{\bB_{1,1},\bB_{2,2}}$.

(ii) $\Rightarrow$ (i): if $w, z \neq 0$ then $g \cdot \bA = \bA'$ where $$g = \begin{pmatrix} wI_{p_1} & X \\ 0 & zI_{p_2} \end{pmatrix}.$$
If $w \neq 0$ but $z = 0$ then  
$$ \begin{pmatrix} wI_{p_1} & X \\ 0 & I_{p_2} \end{pmatrix} \cdot \bA = \begin{pmatrix} \bB_{1,1} & 0 \\ 0 & \bB_{2,2} \end{pmatrix} = \bA_{ss}.$$
Therefore $(\bA,\bA') \in G^2 \cdot \cc_{(p_1,p_2), (12)} \subseteq \overline{G^2 \cdot \cc_{(p), \id}}$, with the latter inclusion a consequence of  Lemma \ref{inclusions}, which shows (i). 

The case $z \neq 0$ and $w=0$ is obviously similar. Finally if $z=w=0$ then necessarily $p_1=p_2$ and $\bB_{1,1} = h \cdot \bB_{2,2}$ for some $h \in \GL_{p_1}$. Let \[g = \begin{pmatrix} I_{p_1} & 0 \\ 0 & h \end{pmatrix} \in G,\] then 
So $(g \cdot \bA, g \cdot \bA') \in \cc_{(p_1,p_2), (12)} \subseteq \overline{G^2 \cdot \cc_{(p), \id}}$ which shows (i).


\end{proof}

\begin{cor}
We have an inclusion $((p_1,p_2) \id) \preceq ((p), \id)$ if and only if $n=2$ and $p=2$.
\end{cor}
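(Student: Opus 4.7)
The plan is to apply Proposition \ref{newrkcondition} at a generic point of $\cc_{(p_1,p_2),\id}$. Let $U \subseteq \cc_{(p_1,p_2),\id}$ be the open subset on which the diagonal blocks $\bB_{1,1} \in \M^n_{p_1}$ and $\bB_{2,2} \in \M^n_{p_2}$ are simple and non-isomorphic; $U$ is nonempty, and hence dense, because $n \geq 2$. Since $\overline{G^2 \cdot \cc_{(p),\id}}$ is closed and $\cc_{(p_1,p_2),\id}$ is irreducible, the inclusion $((p_1,p_2),\id) \preceq ((p),\id)$ amounts to the assertion that every point of $U$ lies in $\overline{G^2 \cdot \cc_{(p),\id}}$; by Proposition \ref{newrkcondition}, this is in turn equivalent to condition (iii) of that proposition holding on all of $U$.

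For the forward direction, fix simple non-isomorphic $\bB_{1,1}, \bB_{2,2}$. Schur's lemma gives $\ker d_{\bB_{1,1},\bB_{2,2}}=0$, so $\im d_{\bB_{1,1},\bB_{2,2}}$ is a $p_1p_2$-dimensional subspace of the $np_1p_2$-dimensional space $\M^n_{p_1,p_2}$. Condition (iii) then says that $\mathrm{span}(\bB_{1,2},\bB'_{1,2})$ meets this image non-trivially. If $(n-1)p_1p_2 \geq 2$, a generic $2$-plane in $\M^n_{p_1,p_2}$ avoids a fixed $p_1p_2$-dimensional subspace, so one can choose $\bB_{1,2},\bB'_{1,2}$ spanning such a $2$-plane and thereby produce a point of $U$ violating (iii). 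Hence $(n-1)p_1p_2 < 2$, which together with $n\geq 2$ and $p_1,p_2\geq 1$ forces $n=p=2$.

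For the converse, suppose $n=p=2$ and $p_1=p_2=1$. Then $\im d_{\bB_{1,1},\bB_{2,2}}$ is the line $\C(\bB_{1,1}-\bB_{2,2}) \subseteq \C^2$, which is nonzero on $U$. On the dense open subset of $U$ where $\bB_{1,2}$ and $\bB'_{1,2}$ are linearly independent, their span is all of $\C^2$ and therefore contains this line, so (iii) holds; Proposition \ref{newrkcondition} then places the corresponding points of $U$ inside $\overline{G^2 \cdot \cc_{(p),\id}}$, and the density of this subset together with the closedness of the target yields $\overline{G^2 \cdot \cc_{(p_1,p_2),\id}} \subseteq \overline{G^2 \cdot \cc_{(p),\id}}$. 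The only delicate point throughout is the density/closedness bookkeeping that lets one treat \emph{generic in $U$} and \emph{all of $U$} interchangeably in both directions; this is where one must be careful, since Proposition \ref{newrkcondition} is only stated for pairs with simple diagonal blocks.
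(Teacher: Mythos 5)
Your proof is correct and takes essentially the same route as the paper's: both directions rest on Proposition \ref{newrkcondition} together with the observation (via Schur's lemma) that $\im(d_{\bB_{1,1},\bB_{2,2}})$ is a $p_1p_2$-dimensional subspace of the $np_1p_2$-dimensional codomain, with the threshold $(n-1)p_1p_2 \geq 2$ exactly matching the paper's $np_1p_2 \geq 2+p_1p_2$. The only cosmetic difference is in the converse, where the paper rewrites condition (iii) as a rank condition on three vectors of $\C^2$ that holds automatically when $n=2$, while you restrict to the dense locus where $\bB_{1,2}$ and $\bB'_{1,2}$ are linearly independent and then invoke closedness; this is the same observation packaged with one extra density step.
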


\begin{proof}
Choose arbitrary simple non-isomorphic $\bB_{1,1} \in \M^n_{p_1}$ and $\bB_{2,2} \in \M^n_{p_2}$. As these are not isomorphic, and simple, there is no nonzero homomorphism between these representations. Therefore $d_{\bB_{1,1}, \bB_{2,2}}$ is injective. The image of $d_{\bB_{1,1}, \bB_{2,2}}$ has dimension $p_1p_2$ and its codomain has dimension $np_1p_2$; provided $np_1p_2 \geq 2+p_1p_2$ there exists a pair of $n$-matrices $\bB_{1,2}, \bB'_{1,2} \in \M^n_{p_1,p_2}$ such that no linear combination of these lies in that image. The pair $(\bA,\bA') \in \cc_{(p_1,p_2), \id}$ with nonzero blocks given by $\bB_{i,j}, \bB'_{i,j}$ then does not lie in  $\overline{G^2 \cdot \cc_{(p), \id}}$ by Proposition \ref{newrkcondition}. Therefore  $((p_1,p_2) \id) \not \preceq ((p), \id)$.

Conversely assume that $p=2$. Then $p_1=p_2=1$ and any matrix in $\M^n_{p_1}$ is simple. Moreover, we may reformulate (iii) of Proposition  \ref{newrkcondition} as:
\begin{equation}\label{oldrkcondition} \rk(\ba_{11}-\ba_{22}, \ba_{12}, \ba'_{12}) < 3. \end{equation}
As this condition is vacuous when $n=2$ we get an inclusion $\cc_{(1,1),\id} \subseteq  \overline{G^2 \cdot \cc_{(2), \id}}$. Therefore  $((p_1,p_2), \id) \preceq ((p), \id)$ as required.
\end{proof}

Corollary \ref{reverses} showed that for all $k$ and all $\pi \in \Pi(p,k)$ we have an inclusion $(\pi, \sigma) \preceq ((p), \id)$ where $\sigma = [k,k-1, \ldots, 1]$. Consequently,

\begin{equation*} \bigcup_{k=1}^p \bigcup_{\pi \in \Pi(p,k)} G^2 \cdot \mathcal{C}_{\pi, [k,k-1, \ldots, 1]} \subseteq \overline{G^2 \cdot \mathcal{C}_{(p), \id}}. 
\end{equation*}

In fact it turns out that the above is an equality. We will need the following result which is somewhat reminiscent of the proof of the Hilbert-Mumford theorem.
In the statement and proof we write $T(R)$ for the subgroup of diagonal matrices in $\GL_p(R)$, for any ring $R$, and $\vv(R) = \vv \otimes_{\C} R$ for any ring $R$ containing $\C$.

\begin{Lemma}\label{iwahori}
Let $(\bA,\bA') \in \overline{\Gamma_{G,\vv}}$. Then there exist $h, h' \in G$ such that $(h \cdot \bA, h' \cdot \bA') \in \overline{\Gamma_{T,\vv}}$.
\end{Lemma}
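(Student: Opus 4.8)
The statement says that if $(\bA,\bA') \in \overline{\Gamma_{G,\vv}}$ then after acting by some pair $(h,h') \in G^2$ we can land in $\overline{\Gamma_{T,\vv}}$, where $T = T(\C)$ is the diagonal torus. The natural approach is to use the Iwahori–Bruhat factorization over the field of Laurent series $\C((t))$, which is why the lemma is flagged as ``reminiscent of Hilbert–Mumford.'' Since $(\bA,\bA') \in \overline{\Gamma_{G,\vv}}$, by the curve-selection/valuative criterion there exist $g(t) \in G(\C((t)))$ and $\bA(t) \in \vv(\C[[t]])$ with $\lim_{t\to 0}\bA(t) = \bA$ and $\lim_{t\to 0} g(t)\cdot \bA(t) = \bA'$. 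The idea is to factor $g(t)$ using the Iwahori decomposition $G(\C((t))) = \bigsqcup_{\sigma \in W} G(\C[[t]])\, \sigma(t)\, G(\C[[t]])$ — more precisely the Cartan decomposition $G(\C((t))) = G(\C[[t]])\, T^-(t)\, G(\C[[t]])$, where $T^-(t)$ ranges over diagonal matrices whose entries are powers of $t$. So write $g(t) = k'(t)\, \lambda(t)\, k(t)$ with $k(t), k'(t) \in \GL_p(\C[[t]])$ and $\lambda(t) \in T(\C((t)))$ a diagonal matrix of $t$-powers.

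**Main steps.** First I would set $h := \lim_{t\to 0} k(t) \in G$ and $h' := \lim_{t\to 0} k'(t)^{-1} \in G$ (these limits exist because $k(t), k'(t) \in \GL_p(\C[[t]])$ and their determinants are units, so their inverses are also in $\GL_p(\C[[t]])$; one needs to be slightly careful that the reductions are invertible, which they are since the matrices lie in $\GL_p$ over $\C[[t]]$). Then define $\bC(t) := k(t)\cdot\bA(t)$, which still lies in $\vv(\C[[t]])$ with $\lim_{t\to 0}\bC(t) = h\cdot\bA$. Compute $g(t)\cdot\bA(t) = k'(t)\cdot(\lambda(t)\cdot\bC(t))$, so that $\lambda(t)\cdot\bC(t) = k'(t)^{-1}\cdot(g(t)\cdot\bA(t))$; taking $t\to 0$ on the right gives $h'\cdot\bA'$ provided the left-hand limit exists. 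Now $\lambda(t) \in T(\C((t)))$, and $\bC(t) \in \vv(\C[[t]])$, so $\lambda(t)\cdot\bC(t)$ is a curve in $\vv(\C((t)))$; since its limit $h'\cdot\bA'$ exists in $\vv(\C)$, the pair $(h\cdot\bA,\, h'\cdot\bA')$ is the limit as $t\to 0$ of $(\bC(t),\, \lambda(t)\cdot\bC(t)) \in \Gamma_{T,\vv}(\C((t)))$ — here I use that $\bC(t) = \lim$-point has limit $h\cdot\bA$ and the second coordinate has limit $h'\cdot\bA'$. Hence $(h\cdot\bA, h'\cdot\bA') \in \overline{\Gamma_{T,\vv}}$ as desired.

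**The obstacle.** The delicate point is the convergence bookkeeping: one must ensure that after splitting off the $\GL_p(\C[[t]])$-factors, the remaining curve $(\bC(t), \lambda(t)\cdot\bC(t))$ really does have both coordinates converging in $\vv(\C)$ as $t \to 0$. The first coordinate is automatic. For the second, convergence of $k'(t)^{-1}\cdot(g(t)\cdot\bA(t))$ follows from convergence of $g(t)\cdot\bA(t)$ together with $k'(t)^{-1} \in \GL_p(\C[[t]])$ — but one should state this as: if $\bD(t) \in \vv(\C[[t]])$ and $u(t) \in \GL_p(\C[[t]])$ then $u(t)\cdot\bD(t) \in \vv(\C[[t]])$ too, and limits compose. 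So the genuine content is really just the Cartan/Iwahori decomposition of $G(\C((t)))$ (for $G = \GL_p$ this is the classical Smith normal form over $\C[[t]]$, a PID), plus the observation that $G(\C[[t]])$ preserves $\vv(\C[[t]])$ and acts continuously on limits. I would present the decomposition step carefully and treat the rest as routine valuation-theoretic manipulation. Finally I would remark that the same argument with the partial order replaced works block-wise if needed later, but for this lemma the above suffices.
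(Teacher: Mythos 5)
Your proof is correct and is essentially the paper's own argument: both use the valuative description of $(\bA,\bA')\in\overline{\Gamma_{G,\vv}}$ via $g(t)\in\GL_p(\C((t)))$, factor $g(t)$ through $\GL_p(\C[[t]])$ on both sides with a diagonal middle term (Smith normal form over the PID $\C[[t]]$, i.e.\ the Cartan decomposition you cite), and then push the $\GL_p(\C[[t]])$ factors into the limiting group elements $h,h'$. The notation differs slightly (the paper writes $h'(t)g(t)h(t)^{-1}=\lambda(t)$, corresponding to your $k'(t)=h'(t)^{-1}$, $k(t)=h(t)$), but the substance and the convergence bookkeeping are the same.
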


\begin{proof} The hypotheses tell us that there exist $\bA(t) \in \vv(\C[[t]])$, $g(t) \in \GL_p(\C((t)))$ with 
\[\lim_{t \rightarrow 0} \bA(t) = \bA\] and
\[\lim_{t \rightarrow 0} g(t) \cdot \bA(t) = \bA'.\]
Since $\C[[t]]$ is principle ideal domain and $\C((t))$ its field of fractions, there exist $h(t), h'(t) \in \GL_p(\C[[t]])$ such that $h'(t) g(t) h(t)^{-1}:= \lambda(t) \in \GL_p(\C((t)))$ is diagonal. Let $\bA'(t) =  g(t) \cdot \bA(t) \in  \vv(\C[[t]])$, then
\[\bA'(t) = h'(t)^{-1} \lambda(t) h(t) \cdot \bA(t)\] for all $t \in \C^*$. Therefore
\[\lim_{t \rightarrow 0} h'(t) \cdot \bA'(t) = \lim_{t \rightarrow 0} \lambda(t) \cdot (h(t) \cdot \bA(t)).\]
This shows that $(h \cdot \bA, h' \cdot \bA') \in \overline{\Gamma_{T,\vv}}$, where $h  = \lim_{t \rightarrow 0} h(t)$ and $h' = \lim_{t \rightarrow 0} h'(t)$.
\end{proof}

\begin{rem}
In the above we used the following basic linear algebraic fact: if $R$ is a principle ideal domain and $Q(R)$ its field of fractions, then for all $g \in \GL_p(Q(R))$ there exist $h,h' \in \GL_p(R)$ such that $h'gh^{-1} \in T(Q(R))$. This is an easy special case of a more general (and more difficult to prove) result known as the Cartan-Iwahori-Matsumoto decomposition \cite[p. 52]{Mumford}:  let $R$ be a discrete valuation ring with field of fractions $Q(R)$. Suppose $G$ is a reductive linear algebraic group defined over $Q(R)$ and $T$ a maximal torus in $G$. Let $g \in G$, then there exist $h,h' \in G(R)$ such that $h'gh^{-1} \in T$. Thus, using this result one can generalise Lemma \ref{iwahori} above to the setting where $G$ is an arbitrary reductive linear algebraic group and $T$ is a maximal torus in $G$.
\end{rem}

\begin{prop}\label{decompCpid}
We have an equality 
\begin{equation*} \bigcup_{k=1}^p \bigcup_{\pi \in \Pi(p,k)} G^2 \cdot \mathcal{C}_{\pi, [k,k-1, \ldots, 1]} = \overline{G^2 \cdot \mathcal{C}_{(p), \id}}. 
\end{equation*}
\end{prop}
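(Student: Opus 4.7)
The containment $\supseteq$ is immediate from Corollary \ref{reverses} together with \eqref{fundamentaldecompposetversion}. The work is in proving $\subseteq$. Take $(\bA,\bA') \in \overline{G^2 \cdot \cc_{(p),\id}} = \overline{\Gamma_{G,\vv}}$. The plan is to use Lemma \ref{iwahori} to reduce to a torus 1-parameter subgroup: we obtain $h,h' \in G$ with $(h\cdot\bA, h'\cdot\bA') \in \overline{\Gamma_{T,\vv}}$, where $T$ is the diagonal torus of $G$. Since the union on the LHS is $G^2$-invariant, this reduction is harmless.

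By definition of the closure, there exist $\tilde\bA(t) \in \vv(\C[[t]])$ and $\lambda(t) \in T(\C((t)))$ such that $\lim_{t\to 0}\tilde\bA(t) = h\cdot\bA$ and $\lim_{t\to 0}\lambda(t)\cdot\tilde\bA(t) = h'\cdot\bA'$. Writing $\lambda(t) = \operatorname{diag}(c_1 t^{d_1},\ldots,c_p t^{d_p})$ with $c_i\in\C^*$ and absorbing the scalars $c_i$ into $h'$, then further conjugating by a permutation matrix absorbed into both $h$ and $h'$, we may assume $\lambda(t) = \operatorname{diag}(t^{d_1},\ldots,t^{d_p})$ with $d_1 \leq d_2 \leq \cdots \leq d_p$. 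Let $e_1 < \cdots < e_k$ be the distinct values of the $d_i$ and set $p_l = |\{i : d_i = e_l\}|$, so $\pi' := (p_1,\ldots,p_k) \in \Pi(p,k)$.

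The entrywise action is $(\lambda(t)\cdot\tilde\bA(t))_{ij} = t^{d_i-d_j}\tilde\bA(t)_{ij}$. Existence of the limit at $t=0$ forces $(h\cdot\bA)_{ij} = 0$ whenever $d_i < d_j$, so $h\cdot\bA$ is $\pi'$-block-\emph{lower}-triangular; it forces $(h'\cdot\bA')_{ij} = \lim t^{d_i-d_j}\tilde\bA(t)_{ij} = 0$ whenever $d_i > d_j$, so $h'\cdot\bA'$ is $\pi'$-block-upper-triangular; and when $d_i = d_j$ both limits equal $\tilde\bA(0)_{ij}$, so the $\pi'$-block-diagonals of $h\cdot\bA$ and $h'\cdot\bA'$ coincide.

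To finish, let $w \in G$ be the permutation matrix that reverses the order of the $\pi'$-blocks. Conjugation by $w$ carries the $\pi'$-block-lower-triangular matrix $h\cdot\bA$ into an element of $\vv_\pi$ where $\pi := (p_k,p_{k-1},\ldots,p_1)$; the $j$-th diagonal $\pi$-block of $wh\cdot\bA$ is the $(k{+}1{-}j)$-th diagonal $\pi'$-block of $h\cdot\bA$. Setting $\sigma = [k,k-1,\ldots,1] \in S_k$, we have $\sigma(\pi) = \pi'$, so $h'\cdot\bA' \in \vv_{\sigma(\pi)}$, and the condition $\bB_{j,j} = \bB'_{\sigma(j),\sigma(j)}$ defining $\cc_{\pi,\sigma}$ is precisely the coincidence of $(k{+}1{-}j)$-th diagonal $\pi'$-blocks already established. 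Hence $(wh\cdot\bA,\, h'\cdot\bA') \in \cc_{\pi,\sigma}$, and so $(\bA,\bA') \in G^2 \cdot \cc_{\pi,\sigma}$, as required. The main technical obstacle is precisely this bookkeeping: one must show that $h\cdot\bA$ and $h'\cdot\bA'$ are governed by the \emph{same} block structure $\pi'$ — this is forced by the common 1-parameter subgroup $\lambda(t)$ — so that a single block reversal $\sigma$ realises the required matching of diagonal blocks.
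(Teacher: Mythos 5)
Your proof is correct and follows essentially the same route as the paper's: reduce to the torus via Lemma \ref{iwahori}, read off block-triangularity conditions from the diagonal one-parameter family, sort the exponents, and identify the result with an element of $G^2\cdot\cc_{\pi,[k,\ldots,1]}$. (The paper applies the block-reversal permutation $s_{\pi,\sigma}$ to the $\bA'$-side rather than to the $\bA$-side as you do, and it writes the two block-triangularity conditions with the opposite orientation — but these are cosmetic differences; in fact your orientations are the correct ones given the sorting $d_1\leq\cdots\leq d_p$.) One small point to tighten: the diagonal entries of $\lambda(t)\in T(\C((t)))$ are Laurent series, so you should factor $\lambda(t)=u(t)\mu(t)$ with $u(t)\in T(\C[[t]])$ a unit and $\mu(t)=\operatorname{diag}(t^{d_1},\ldots,t^{d_p})$, then absorb $u(0)\in T(\C)$ (not the full unit) into $h'$; your phrasing ``absorbing the scalars $c_i$'' slightly underplays this, but the step is harmless.
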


\begin{proof}
Only the inequality $\supseteq$ needs to be proven, since $\subseteq$ is proved in Corollary \ref{reverses}. Accordingly let $(\bA,\bA') \in  \overline{G^2 \cdot \mathcal{C}_{(p), \id}}$. Recall that $\overline{G^2 \cdot \mathcal{C}_{(p), \id}} = \overline{\Gamma_{G,\vv}}$. Therefore by Lemma \ref{iwahori} there exist $h,h' \in G$ such that $(h \cdot \bA, h' \cdot \bA') \in  \overline{\Gamma_{T,\vv}}$. 

Write $\bB = h\cdot \bA$ and $\bB'= h' \cdot \bA'$. There exist $\bB(t), \bB'(t) \in \vv(\C[[t]])$ and $g(t) \in T(\C((t)))$ such that 
\begin{align*}
\lim_{t \rightarrow 0} \bB(t) &= \bB;\\
\lim_{t \rightarrow 0} \bB'(t) &= \bB';\\
g(t) \cdot \bB(t) &= \bB'(t) 
\end{align*}
for all $t \in \C^*$. We may assume
\[g(t) = \bigoplus_{i=1}^p t^{e_i}\]
for some $(e_1,e_2, \ldots, e_p) \in \Z^p$. We must therefore have:
\[
\left\{ \begin{array}{lr} \bb_{ij} = \bb'_{ij} & \ \text{if $e_i=e_j$};\\
					\bb_{ij} = 0 &  \ \text{if $e_i<e_j$};\\
					\bb'_{ij} = 0 &  \ \text{if $e_i>e_j$}. 
\end{array} \right.
 \]
By altering $h$ and $h'$ if necessary, we may assume $e_1 \leq e_2 \leq \cdots \leq e_p$. If  the vector $(e_1, e_2, \ldots, e_p)$ contains exactly $k$ unique entries $(f_1,f_2, \ldots, f_k)$, we define a partition $\pi = (p_1, p_2, \ldots, p_k) \in \Pi(p,k)$ by setting $p_i = |\{j:e_j=f_i\}|$. With these assumptions we see that $\bB$ and $\bB'$ are $\pi$-block $n$-matrices, with equal diagonal blocks, such that $\bB$ is $\pi$-block upper triangular and $\bB'$ is $\pi$-block lower triangular. Let $\sigma = [k,k-1,k-2, \ldots, 1]$, then we have
\[(\bB, s_{\pi, \sigma} \cdot \bB') \in \mathcal{C}_{\pi, \sigma}.\]
Therefore
\[(\bA, \bA') \in G^2 \cdot  \mathcal{C}_{\pi, \sigma}\]
as required.
\end{proof}

\begin{cor}\label{boundaryofCp}
We have
\begin{equation}
\overline{G^2 \cdot \mathcal{C}_{(p), \id}} =  G^2 \cdot \mathcal{C}_{(p), \id} \cup \bigcup_{\pi \in \Pi(p,2)} \overline{G^2 \cdot \mathcal{C}_{\pi, (12)}}.
\end{equation}
\end{cor}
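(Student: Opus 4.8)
The inclusion $\supseteq$ is immediate: $G^2\cdot\mathcal{C}_{(p),\id}\subseteq\overline{G^2\cdot\mathcal{C}_{(p),\id}}$, while for each $\pi\in\Pi(p,2)$ Lemma \ref{inclusions} gives $\overline{G^2\cdot\mathcal{C}_{\pi,(12)}}\subseteq\overline{G^2\cdot\mathcal{C}_{(p),\id}}$. For $\subseteq$ the plan is to feed Proposition \ref{decompCpid} into the problem: it identifies $\overline{G^2\cdot\mathcal{C}_{(p),\id}}$ with
\[
\bigcup_{k=1}^{p}\ \bigcup_{\pi\in\Pi(p,k)} G^2\cdot\mathcal{C}_{\pi,[k,k-1,\ldots,1]}.
\]
The $k=1$ summand is exactly $G^2\cdot\mathcal{C}_{(p),\id}$, and each $k=2$ summand is $G^2\cdot\mathcal{C}_{\pi,(12)}$ for some $\pi\in\Pi(p,2)$, which lies in $\bigcup_{\pi\in\Pi(p,2)}\overline{G^2\cdot\mathcal{C}_{\pi,(12)}}$. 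Thus the corollary reduces to the claim that for every $k\ge 3$ and every $\pi=(p_1,\ldots,p_k)\in\Pi(p,k)$,
\[
G^2\cdot\mathcal{C}_{\pi,[k,k-1,\ldots,1]}\ \subseteq\ \overline{G^2\cdot\mathcal{C}_{\hat\pi,(12)}},\qquad \hat\pi:=(p_1,\ p_2+\cdots+p_k)\in\Pi(p,2).
\]

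To prove this I would fix $(\bA,\bA')\in\mathcal{C}_{\pi,\sigma}$, where $\sigma:=[k,k-1,\ldots,1]$, set $\hat\sigma:=(12)$, $q:=p_2+\cdots+p_k$, and apply Lemma \ref{clCpisigma} to the coarser data $\hat\pi$, $\hat\sigma$. Since $\pi$ refines $\hat\pi$ we have $\bA\in\vv_{\hat\pi}$, and since $\sigma(\pi)=(p_k,\ldots,p_2,p_1)$ refines $\hat\sigma(\hat\pi)=(q,p_1)$ we have $\bA'\in\vv_{\hat\sigma(\hat\pi)}$. The two $\hat\pi$-diagonal blocks of $\bA$ are $\bB_{1,1}$ and the lower-right $q\times q$ block $\bC$ of $\bA$; the two $\hat\sigma(\hat\pi)$-diagonal blocks of $\bA'$ are the upper-left $q\times q$ block $\bC'$ of $\bA'$ and (by the defining relation of $\mathcal{C}_{\pi,\sigma}$, since $\sigma(k)=1$) the block $\bB_{1,1}$. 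By Lemma \ref{clCpisigma} it therefore suffices to check that $(\bB_{1,1},\bB_{1,1})\in\overline{\GL_{p_1}^2\cdot\mathcal{C}_{(p_1),\id}}$ and $(\bC,\bC')\in\overline{\GL_{q}^2\cdot\mathcal{C}_{(q),\id}}$. The first holds because $\mathcal{C}_{(p_1),\id}$ is the diagonal of $\M_{p_1}^n\times\M_{p_1}^n$. For the second, a short index-chase shows that $\bC$ is $(p_2,\ldots,p_k)$-block upper triangular with diagonal blocks $\bB_{2,2},\ldots,\bB_{k,k}$, that $\bC'$ is $(p_k,\ldots,p_2)$-block upper triangular with diagonal blocks $\bB_{k,k},\ldots,\bB_{2,2}$, and that these diagonal blocks match in the way required by the reversal relation; hence $(\bC,\bC')\in\mathcal{C}_{(p_2,\ldots,p_k),\,[k-1,k-2,\ldots,1]}$, and Corollary \ref{reverses}, applied with ambient group $\GL_q$ acting on $\M_q^n$, gives $\mathcal{C}_{(p_2,\ldots,p_k),\,[k-1,k-2,\ldots,1]}\subseteq\overline{\GL_q^2\cdot\mathcal{C}_{(q),\id}}$.

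Feeding these two facts into Lemma \ref{clCpisigma} puts $(\bA,\bA')$ in $\overline{(P_{\hat\pi}\times P_{\hat\sigma(\hat\pi)})\cdot\mathcal{C}_{\hat\pi,(12)}}\subseteq\overline{G^2\cdot\mathcal{C}_{\hat\pi,(12)}}$; since the latter set is $G^2$-stable (it is the closure of a $G^2$-stable set), acting by $G^2$ yields $G^2\cdot\mathcal{C}_{\pi,\sigma}\subseteq\overline{G^2\cdot\mathcal{C}_{\hat\pi,(12)}}$, which is the claim, and the corollary follows. I expect the only real work to be the bookkeeping in the middle paragraph — pinning down the $\hat\pi$- and $\hat\sigma(\hat\pi)$-block structures of $\bA$ and $\bA'$ and confirming that $(\bC,\bC')$ lands in the reversal cell $\mathcal{C}_{(p_2,\ldots,p_k),[k-1,\ldots,1]}$ rather than some other one. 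An alternative that outsources this bookkeeping is to invoke Proposition \ref{compatibility} directly for $(\hat\pi,(12))$: the pairs $(\pi,\sigma)$ and $(\hat\pi,(12))$ are compatible, and the single nontrivial recursive hypothesis of that proposition, attached to the size-$q$ block, is again exactly Corollary \ref{reverses}.
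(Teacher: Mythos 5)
Your argument is correct, and it uses the same underlying machinery as the paper (Proposition \ref{decompCpid}, Lemma \ref{clCpisigma}, and Corollary \ref{reverses}), but with a leaner organization of the $\subseteq$ direction. The paper first establishes a full characterization of $\overline{G^2\cdot\mathcal{C}_{\pi,(12)}}$ as a union of reversal cells indexed by refinements $\pi'\le\pi$ (their Equation~\eqref{clCp1p212}, whose $\subseteq$ direction is handled via Corollary \ref{speculation} and Lemma \ref{clCpisigma} and then feeding Proposition \ref{decompCpid} into each of the two diagonal blocks), and then observes that every refinement with $k\ge 3$ sits under some two-part partition. You instead take each $k\ge 3$ reversal cell $G^2\cdot\mathcal{C}_{\pi,[k,\ldots,1]}$ from Proposition \ref{decompCpid} and directly identify a single explicit $\hat\pi\in\Pi(p,2)$ (merge all blocks after the first) whose cell absorbs it, verifying the two hypotheses of Lemma \ref{clCpisigma}; the second hypothesis is exactly Corollary \ref{reverses} applied inside $\GL_q$. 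This avoids proving the intermediate description of $\overline{G^2\cdot\mathcal{C}_{\pi,(12)}}$ (which the paper uses only in one direction here anyway) at the cost of a direct block-bookkeeping check, which you carry out correctly: the lower-right $q\times q$ block pair $(\bC,\bC')$ lands in $\mathcal{C}_{(p_2,\ldots,p_k),[k-1,\ldots,1]}$ because $\bB_{i+1,i+1}=\bB'_{k-i,k-i}$. Both routes are valid; yours is marginally shorter since it proves only what the corollary needs.
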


\begin{proof}
The inclusion $\supseteq$ follows immediately from Proposition \ref{decompCpid}, so we need only show $\subseteq$.
We first show that, for all $\pi \in \Pi(p,2)$ we have
\begin{equation}\label{clCp1p212}
\overline{G^2 \cdot \mathcal{C}_{\pi, (12)}} = \bigcup_{k=2}^p \bigcup_{\pi' \in \Pi(p,k), \pi' \leq \pi} G^2 \cdot \mathcal{C}_{\pi, [k,k-1,\ldots, 1]}.
\end{equation}
The proof of the $\supseteq$ part of this claim is similar to the proof of Corollary \ref{reverses} and left to the reader. For the inclusion $\subseteq$, let $(\bA,\bA' )\in \overline{G^2 \cdot \mathcal{C}_{\pi, (12)}}$. By Corollary \ref{speculation} there exist $g, g' \in G$ such that 
\[ (g \cdot \bA,g'\cdot \bA' )\in \overline{(P_{(p_1,p_2)} \times P_{(p_2,p_1)}) \cdot \mathcal{C}_{\pi, (12)}}.\]
Let $\bB_{i,j}, \bB'_{i,j}$ be the $(p_1,p_2)$ blocks of $g \cdot \bA$, resp. $(p_2,p_1)$ blocks of $g' \cdot \bA'$. By Proposition \ref{clCpisigma} we have
\[(\bB_{1,1}, \bB'_{2,2}) \in \overline{\GL^2_{p_1} \cdot \cc_{(p_1,\id)}}\]
and
 \[(\bB_{2,2}, \bB'_{1,1}) \in \overline{\GL^2_{p_2} \cdot \cc_{(p_2,\id)}}.\]
By Proposition \ref{decompCpid} we have
\[(\bB_{1,1}, \bB'_{2,2}) \in  \bigcup_{k=1}^{p_1} \bigcup_{\pi \in \Pi(p_1,k)} G^2 \cdot \mathcal{C}_{\pi, [k,k-1, \ldots, 1]}\]
and
\[(\bB_{2,2}, \bB'_{1,1}) \in  \bigcup_{k=1}^{p_2} \bigcup_{\pi \in \Pi(p_2,k)} G^2 \cdot \mathcal{C}_{\pi, [k,k-1, \ldots, 1]}.\]
Therefore $$(g \cdot \bA,g'\cdot \bA' )\in \bigcup_{k=2}^p \bigcup_{\pi' \in \Pi(p,k), \pi' \leq \pi} G^2 \cdot \mathcal{C}_{\pi, [k,k-1,\ldots, 1]} $$
from which Equation \eqref{clCp1p212} follows. We now note that for every $\pi' \in \Pi(p,k)$, $k \geq 3$, there exists some $\pi \in \Pi(p,2)$ with $\pi' \leq \pi$. So the desired result now follows from the claim above and Proposition \ref{decompCpid}.
\end{proof}

As an incidental result, we have:

\begin{cor}
The orbit space $\overline{G^2 \cdot \cc_{\pi, \sigma}}$ is closed if and only if $|\pi|=p$.
\end{cor}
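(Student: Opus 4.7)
My plan is to handle the two directions separately. The ``if'' direction is immediate from the second statement of Corollary \ref{speculation}: the condition $|\pi|=p$ forces $\pi=(1,1,\ldots,1)$, and so $G^2\cdot\cc_{\pi,\sigma}$ is closed. The work therefore lies entirely in the converse.

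Assume $|\pi|=k<p$, so that at least one part satisfies $p_i\geq 2$. I will exhibit an explicit pair $(\bA,\bA')\in\overline{G^2\cdot\cc_{\pi,\sigma}}\setminus G^2\cdot\cc_{\pi,\sigma}$. Set $\bA=(J,0,\ldots,0)\in\M_p^n$, where $J$ is a single Jordan block of size $p$ with eigenvalue $\lambda\in\C$, and let $\bA'=(\lambda I_p,0,\ldots,0)$, which is the semisimplification $\bA_{ss}$ and is a tuple of scalar matrices. Both $\bA$ and $\bA'$ are upper triangular, so they belong to $\vv_\pi$ and to $\vv_{\sigma(\pi)}$.

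To place $(\bA,\bA')$ in the closure, I will apply Lemma \ref{clCpisigma}. The $\pi$-diagonal blocks of $\bA$ are themselves Jordan blocks of sizes $p_i$ with eigenvalue $\lambda$, while the $\sigma(\pi)$-diagonal blocks of $\bA'$ are the scalar matrices $\lambda I_{p_i}$. By Proposition \ref{closures}, each such pair lies in $\overline{\GL_{p_i}^2\cdot\cc_{(p_i),\id}}$, since $\lambda I_{p_i}$ is the semisimplification of the corresponding Jordan block. The lemma then yields $(\bA,\bA')\in\overline{(P_\pi\times P_{\sigma(\pi)})\cdot\cc_{\pi,\sigma}}\subseteq\overline{G^2\cdot\cc_{\pi,\sigma}}$.

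Next, I will show $(\bA,\bA')\notin G^2\cdot\cc_{\pi,\sigma}$. Suppose for contradiction that $(g\cdot\bA,g'\cdot\bA')\in\cc_{\pi,\sigma}$ for some $(g,g')\in G^2$. Since $\bA'$ consists of scalar matrices, $g'\cdot\bA'=\bA'$; therefore its $\sigma(\pi)$-diagonal blocks are $\lambda I_{p_i}$, and the matching condition of $\cc_{\pi,\sigma}$ forces each $\pi$-diagonal block of $g\cdot\bA$ to equal $\lambda I_{p_i}$. However, $g\cdot\bA$ is isomorphic to $\bA$ as a quiver representation, and the only invariant subspaces of $\bA$ form the standard flag $0\subset\langle e_1\rangle\subset\cdots\subset\langle e_1,\ldots,e_p\rangle$ — because $A_2,\ldots,A_n$ vanish and a single-eigenvalue Jordan block admits a unique invariant subspace of each dimension. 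Hence any $\pi$-block-upper-triangular expression of $g\cdot\bA$ is realised via this unique flag after $g$-conjugation, and its $i$-th diagonal block is $\GL_{p_i}$-conjugate to a Jordan block of size $p_i$. For $p_i\geq 2$ such a block cannot be $\GL_{p_i}$-conjugate to the scalar matrix $\lambda I_{p_i}$, again by Proposition \ref{closures}, and we reach a contradiction.

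The step I anticipate as the main obstacle is the rigidity argument just sketched: translating the uniqueness of invariant subspaces in a Jordan block into a statement about the $\GL_{p_i}$-conjugacy classes of the $\pi$-diagonal blocks of $g\cdot\bA$, while respecting the literal-equality (as opposed to isomorphism) requirement built into the definition of $\cc_{\pi,\sigma}$.
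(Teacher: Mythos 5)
Your proof is correct, but it takes a genuinely different route from the paper for the ``only if'' direction. The paper first settles $|\pi|=1$ by invoking Proposition \ref{decompCpid} (which exhibits strictly more in $\overline{G^2\cdot\cc_{(p),\id}}$ than the graph itself), and then bootstraps to $1<k<p$ by inserting a non-closed witness of size $p_i$ into a single diagonal block while filling the remaining diagonal blocks with pairwise non-isomorphic simples; the rigidity there comes from simplicity. You instead produce one explicit witness that works uniformly for every $\pi$ with $|\pi|<p$: take $\bA=(J,0,\ldots,0)$ a single Jordan block and $\bA'=\bA_{ss}=(\lambda I_p,0,\ldots,0)$, place it in the closure via Lemma \ref{clCpisigma} and Proposition \ref{closures}(2), and rule out membership in $G^2\cdot\cc_{\pi,\sigma}$ using the \emph{uniqueness of the $J$-invariant flag}. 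Your rigidity mechanism (a single Jordan block has exactly one invariant subspace per dimension, so the $j$-th $\pi$-diagonal block of any upper-triangularisation is forced to be similar to $J_{p_j}(\lambda)$, hence non-scalar whenever $p_j\geq 2$) replaces the paper's reliance on Schur-type arguments and on the decomposition result \ref{decompCpid}, so the argument is more self-contained and more elementary. The only cosmetic slip is the final appeal to Proposition \ref{closures} to say a size-$p_i\geq 2$ Jordan block is not conjugate to $\lambda I_{p_i}$; that is just Jordan normal form (or: the scalar matrix has trivial conjugacy class) and needs no orbit-closure machinery. Otherwise the logic is sound, and the key fact you flagged as the potential obstacle — converting the unique flag into a constraint on the literal diagonal blocks — is handled correctly.
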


\begin{proof}
The if direction was established in the second part of Corollary \ref{speculation}, so we need only prove the only if direction. If $|\pi|=1$ then this is a consequence of Proposition \ref{decompCpid}. So suppose $1<|\pi|=k<p$. Then there exists $1 \leq i \leq k$ such that $p_i>1$. By the $|\pi|=1$ case, we may choose a pair $(\bB, \bB') \in  \overline{\GL^2_{p_i} \cdot \cc_{((p_i),\id)}} \setminus  \GL^2_{p_i} \cdot \cc_{((p_1),\id)}$. Construct a pair $(\bA,\bA') \in \overline{(P_{\pi} \times P_{\sigma(\pi)}) \cdot \cc_{\pi, \sigma}}$ by setting $\bB_{i,i}= \bB, \bB'_{\sigma(i), \sigma(i)} = \bB'$, choosing arbitrary simple $\bB_{j,j} = \bB'_{\sigma(j), \sigma(j)}$ for each $j \neq i$, and choosing $\bB_{i,j}, \bB'_{i,j}$ for $i>j$ arbitrarily. Then 
$(\bA,\bA') \not \in (P_{\pi} \times P_{\sigma(\pi)}) \cdot \cc_{\pi, \sigma}$, and therefore
$$ (P_{\pi} \times P_{\sigma(\pi)}) \cdot \cc_{\pi, \sigma} \subsetneq \overline{(P_{\pi} \times P_{\sigma(\pi)}) \cdot \cc_{\pi, \sigma}}.$$
The desired result now follows from the first part of Corollary \ref{speculation}.
\end{proof}

We need one further new concept: let $(\pi, \sigma) \in \mathcal{P}_{p,n}$ and let $(\bA,\bA') \in \mathcal{C}_{\pi, \sigma}$. Suppose for the time being that $n \geq 3$. We shall say the pair $(\bA,\bA')$ is {\it supermaximally general} if the following hold:
\begin{enumerate}
\item $\bA$ is maximally general for $\pi$;
\item $\bA'$ is maximally general for $\sigma(\pi)$;
\item for all $l=1,\ldots, k$ satsifying $\sigma(l+1) = \sigma(l)+1$, we have that the sub-$\pi$-block $n$-matrix pair
\[\left( \begin{pmatrix}\bB_{l,l} & \bB_{l,l+1} \\ 0 & \bB_{l+1,l+1} \end{pmatrix}, \begin{pmatrix} \bB_{\sigma(l),\sigma(l)} & \bB_{\sigma(l),\sigma(l)+1} \\ 0 & \bB_{\sigma(l)+1,\sigma(l)+1} \end{pmatrix} \right) \not \in \overline{\GL^2_{p_l+p_{l+1}} \cdot \mathcal{C}_{(p_l+p_{l+1}), \id}}.\]
\end{enumerate}

If $n=2$ we weaken the definition by ignoring condition (3) in cases where $p_{l}=p_{l+1}=1$.  It is a consequence of Proposition \ref{newrkcondition} that for every pair $(\pi, \sigma)$, the space $\mathcal{C}_{\pi, \sigma}$ contains a supermaximally general pair $(\bA,\bA')$ with an arbitrary choice simple diagonal $\pi$-blocks in $\bA$.

\begin{Lemma}\label{supermax} Let $(\bA,\bA') \in \cc_{\pi, \sigma}$ be supermaximally general and suppose that $(\bA,\bA') \in \overline{G \cdot \cc_{(p), \id}}$. Then there is a chain of inclusions
\[(\pi, \sigma) = (\pi_0, \sigma_0) \preceq (\pi_1,\sigma_1) \preceq \cdots \preceq (\pi_{k-1}, \sigma_{k - 1}) = ((p), \id).\]
\end{Lemma}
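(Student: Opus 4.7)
The plan is to argue by induction on $p$. The base case $p=1$ is trivial since $\mathcal{P}_{1,n}$ has only one element. For the inductive step, assume the lemma holds for all $p'<p$, and let $(\bA,\bA')$ be given. If $k=1$ the claim is vacuous, so assume $k\geq 2$. Applying Proposition \ref{decompCpid} I obtain $\mu\in\Pi(p,r)$ such that, writing $\tilde\sigma:=[r,r-1,\ldots,1]$, we have $(\bA,\bA')\in G^2\cdot\cc_{\mu,\tilde\sigma}$.

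The first step is to rule out $r=1$. If $r=1$ then $\bA'=g\bA g^{-1}$ for some $g\in G$. Using the fact that the $\pi$-flag of a maximally general $\bA\in\vv_\pi$ is the unique $\bA$-stable flag of dimensions $p_1,p_1+p_2,\ldots$ (a consequence of Jordan--H\"older together with the pairwise non-isomorphism of the simple $\pi$-diagonal blocks), $g\bA g^{-1}\in\vv_{\sigma(\pi)}$ forces $\sigma=\id$ via Lemma \ref{maxgen}, and then $g$ preserves the standard $\pi$-flag so $g\in P_\pi$. Schur's Lemma applied to each simple $\bB_{l,l}$ forces $g_{l,l}=\alpha_l I_{p_l}$. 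An explicit computation of the $(1,2)$ block of $g\bA g^{-1}$ then yields a nonzero triple satisfying condition (iii) of Proposition \ref{newrkcondition} with $X=g_{1,2}$, so the $(1,2)$ 2-sub-pair lies in $\overline{\GL^2_{p_1+p_2}\cdot\cc_{(p_1+p_2),\id}}$. Since $\sigma=\id$ satisfies $\sigma(2)=\sigma(1)+1$, this contradicts supermax condition (3) at $l=1$. Thus $r\geq 2$ and in particular $\hat p_i<p$ for every $i$.

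With $r\geq 2$ I apply Lemma \ref{reducer} to obtain $\pi\leq\mu$, $\sigma(\pi)\leq\tilde\sigma(\mu)$, compatibility via sub-permutations $\tau_i\in S_{k_i}$, and the sub-pair containments $(\hat\bB_{i,i},\hat\bB'_{\tilde\sigma(i),\tilde\sigma(i)})\in\overline{\GL^2_{\hat p_i}\cdot\cc_{(\hat p_i),\id}}$. At each $\mu$-block $i$ with $k_i\geq 2$ the sub-pair lies in $\cc_{\kappa_i,\tau_i}$, is maximally general for $\kappa_i$ (inherited from the global max generality), and is supermax for $(\kappa_i,\tau_i)$: indeed, compatibility gives $\sigma(L_i+l'+1)-\sigma(L_i+l')=\tau_i(l'+1)-\tau_i(l')$, so condition (3) at a sub-position $l'$ with $\tau_i(l'+1)=\tau_i(l')+1$ transfers verbatim to condition (3) at the global position $l=L_i+l'$, and the relevant 2-sub-pair is literally the same. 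The inductive hypothesis at dimension $\hat p_i<p$ then yields a chain from $(\kappa_i,\tau_i)$ to $((\hat p_i),\id)$ of length $k_i-1$ in $\mathcal{P}_{\hat p_i,n}$.

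Finally I concatenate chains: Corollary \ref{reverses} provides a chain from $(\mu,\tilde\sigma)$ to $((p),\id)$ of length $r-1$, and each step of each sub-chain at $\mu$-block $i$ can be lifted to a one-step covering relation in $\mathcal{P}_{p,n}$, certified by Proposition \ref{compatibility} (compatibility holds by construction, and the required subcondition is exactly the corresponding step of the sub-chain). This yields a chain from $(\pi,\sigma)$ to $(\mu,\tilde\sigma)$ of length $\sum_i(k_i-1)=k-r$, and concatenating gives a chain of total length $(k-r)+(r-1)=k-1$, as required. The main obstacle will be the $r=1$ reduction: it requires both the flag-uniqueness argument that pins down $g\in P_\pi$ and the explicit block computation verifying the rank identity of Proposition \ref{newrkcondition}.
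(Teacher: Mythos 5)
Your proof is correct, but takes a genuinely different inductive route from the paper's. The paper inducts on $k = |\pi|$ (uniformly across all $p$): the base case $k=2$ is disposed of via supermax condition (3) and Lemma \ref{inclusions}, and for $k>2$ one passes to a rank-two coarsening $((p_1,p_2),(12))$ via Corollary \ref{boundaryofCp}, producing exactly two sub-pairs of smaller rank before concatenating. You instead induct on the matrix size $p$, invoke Proposition \ref{decompCpid} directly to land on a full-reversal coarsening $(\mu,[r,\ldots,1])$, rule out $r=1$ by an explicit argument, and then reduce via Lemma \ref{reducer} to $r$ sub-pairs each living in some $\M^n_{\hat p_i}$ with $\hat p_i<p$, before concatenating. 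Both routes are valid and rest on the same underlying machinery (Lemma \ref{reducer}, Proposition \ref{compatibility}, Corollary \ref{reverses}); the paper's rank-two reduction keeps the bookkeeping slightly lighter (only two sub-chains per step), while yours resolves in one blow the observation the paper dismisses with \emph{it is clear that $(\bA,\bA')\not\in G^2\cdot\cc_{(p),\id}$}, which is a useful elaboration. One nitpick on that elaboration: you attribute uniqueness of the $\bA$-stable flag to Jordan--H\"older together with pairwise non-isomorphism of the simple diagonal blocks, but that alone is not enough --- a $\pi$-block-diagonal $\bA$ with distinct simple factors has several invariant flags of the prescribed dimensions. The indecomposability of the consecutive $2$-blocks (the second clause of maximal generality) is what kills the extra flags, so it should be cited there; otherwise the $r=1$ exclusion is sound and the computation verifying Proposition \ref{newrkcondition}(iii) with $X=g_{1,2}$ checks out.
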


\begin{proof} The proof is by induction on $k = |\pi|$. When $k=2$ the result follows immediately from Lemma \ref{newrkcondition}, so we assume $k>2$.
It is clear that $(\bA,\bA') \not \in G^2 \cdot \mathcal{C}_{(p), \id}$, so by Corollary \ref{boundaryofCp} we must have $(\bA,\bA')  \in \overline{G^2 \cdot \mathcal{C}_{(p_1,p_2), (12)}}$ for some $(p_1,p_2) \in \Pi(p,2)$. As both $\bA$ and $\bA'$ are maximally general, $(\pi, \sigma)$ must be compatible with $((p_1,p_2),(12))$. This means that there exist $\kappa_1 \in \Pi(p_1,k_1)$, $\kappa_2 \in \Pi(p_2,k_2)$, $\tau_1 \in S_{k_1}$, $\tau_2 \in S_{k_2}$ such that
\[(\bB_{1,1}, \bB'_{2,2}) \in \overline{\GL^2_{p_1} \cdot \cc_{(\kappa_1, \tau_1)}}, (\bB_{2,2}, \bB'_{1,1}) \in \overline{\GL^2_{p_2} \cdot \cc_{(\kappa_2, \tau_2)}},\] where $\kappa_1$ and $\kappa_2$ are the partitions of $p_1$ and $p_2$ into $k_1$ and $k_2$ blocks respectively induced by $\pi$. Since $(\bA, \bA') \in  \overline{G^2 \cdot \mathcal{C}_{(p_1,p_2), (12)}}$ we also have 
\[(\bB_{1,1}, \bB'_{2,2}) \in \overline{\GL^2_{p_1} \cdot \cc_{((p_1), \id)}}, (\bB_{2,2}, \bB'_{1,1}) \in \overline{\GL^2_{p_2} \cdot \cc_{((p_2), \id)}}.\] Further, $(\bB_{1,1}, \bB'_{2,2})$ is supermaximally general, as is $(\bB_{2,2}, \bB'_{1,1})$. Therefore by induction there are chains
 \[(\kappa_i, \tau_i) = (\kappa_{i,0}, \tau_{i,0}) \preceq (\kappa_{i,1},\tau_{i,1}) \preceq \cdots \preceq (\kappa_{k_i-1}, \tau_{i,k_i - 1}) = ((p_i), \id)\] for $i=1,2$.
Define, for each $i=0, \ldots, k_1-1$
\[\pi_i = (\kappa_{1,i}, \kappa_2)\]
and for each $i=1, \ldots, k_2-1$
\[\pi_{i+k_1-1} = (p_1,\kappa_{2,i}).\]
Further for each $i=0, \ldots, k_1-1$ set
\[\sigma_i = (\tau_2,\tau_{1,i})^*\]
and for each $i=1, \ldots, k_2-1$
\[\sigma_{i+k_1-1} = (\tau_{i,2},\id)^*.\]
Then we have a chain 
\[(\pi, \sigma) = (\pi_0, \sigma_0) \preceq  (\pi_1,\sigma_1) \preceq \cdots \preceq (\pi_{k-2}, \sigma_{k - 2}) = ((p_1,p_2), (12)) \preceq  ((p), \id)\] as required.
\end{proof}

\begin{rem}\label{supermaxcor}
As a consequence of the above, we obtain Theorem \ref{saturated} in the special case where $(\hat{\pi}, \hat{\sigma}) = ((p), \id)$.
\end{rem}

\begin{prop}\label{compatconverse} Let $(\pi, \sigma), (\hat{\pi}, \hat{\sigma}) \in \mathcal{P}_{p,n}$. Then $(\pi, \sigma) \preceq (\hat{\pi}, \hat{\sigma})$ if and only if the pairs are compatible and, in the notation of Proposition \ref{compatibility} we have $$(\kappa_i, \tau_i) \preceq ((\hat{p}_i), \id)$$ for all $i=1, \ldots, \hat{k}$.
\end{prop}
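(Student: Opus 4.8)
The plan is to bootstrap from Proposition~\ref{compatibility}, which already does most of the work. It gives: if $(\pi,\sigma) \preceq (\hat\pi,\hat\sigma)$ then the pairs are compatible; and conversely, compatibility together with the extra hypothesis $\overline{\GL_{\hat p_i}^2 \cdot \cc_{\kappa_i,\tau_i}} \subseteq \overline{\GL_{\hat p_i}^2 \cdot \cc_{(\hat p_i),\id}}$ for all $i$ forces $(\pi,\sigma) \preceq (\hat\pi,\hat\sigma)$. Since this last inclusion is, by definition, the relation $(\kappa_i,\tau_i) \preceq ((\hat p_i),\id)$ in $\mathcal{P}_{\hat p_i,n}$, the only implication left to establish is the \emph{necessity} of the extra hypothesis: if $(\pi,\sigma) \preceq (\hat\pi,\hat\sigma)$ then $(\kappa_i,\tau_i) \preceq ((\hat p_i),\id)$ for every $i = 1, \ldots, \hat k$ (the point flagged earlier as ``also necessary'').

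To prove this I would fix a supermaximally general pair $(\bA,\bA') \in \cc_{\pi,\sigma}$ with an arbitrary prescribed choice of simple diagonal $\pi$-blocks; such a pair exists by the remark following the definition of supermaximal generality, which rests on Proposition~\ref{newrkcondition}. Since $(\bA,\bA') \in \cc_{\pi,\sigma} \subseteq \overline{G^2 \cdot \cc_{\pi,\sigma}} \subseteq \overline{G^2 \cdot \cc_{\hat\pi,\hat\sigma}}$, Lemma~\ref{reducer} applies and yields $\pi \leq \hat\pi$, $\sigma(\pi) \leq \hat\sigma(\hat\pi)$, and $(\hat\bB_{i,i}, \hat\bB'_{\hat\sigma(i),\hat\sigma(i)}) \in \overline{\GL_{\hat p_i}^2 \cdot \cc_{(\hat p_i),\id}}$ for every $i$, where $\hat\bB_{i,j}$ and $\hat\bB'_{i,j}$ denote the $\hat\pi$- and $\hat\sigma(\hat\pi)$-blocks of $\bA$ and $\bA'$.

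The crucial step is then to verify that, regarded as a $\kappa_i$-block $n$-matrix pair, $(\hat\bB_{i,i}, \hat\bB'_{\hat\sigma(i),\hat\sigma(i)})$ lies in $\cc_{\kappa_i,\tau_i}$ and is itself supermaximally general for $(\kappa_i,\tau_i)$; here $\tau_i$ is the $i$th component of the tuple $(\tau_1, \ldots, \tau_{\hat k})$ realising compatibility via \eqref{compatible}, uniquely determined because the diagonal blocks of $\bA'$ are pairwise non-isomorphic. Concretely: $\bA \in \vv_\pi$ and $\pi \leq \hat\pi$ force $\hat\bB_{i,i} \in \vv_{\kappa_i}$, and likewise $\hat\bB'_{\hat\sigma(i),\hat\sigma(i)} \in \vv_{\tau_i(\kappa_i)}$; the equalities $\bB_{j,j} = \bB'_{\sigma(j),\sigma(j)}$ between the diagonal $\pi$- and $\sigma(\pi)$-blocks translate, through \eqref{compatible}, into exactly the matching of diagonal blocks required by membership in $\cc_{\kappa_i,\tau_i}$; maximal generality of $\bA$ (resp.\ $\bA'$) restricts to maximal generality of $\hat\bB_{i,i}$ (resp.\ $\hat\bB'_{\hat\sigma(i),\hat\sigma(i)}$), since their super-diagonal $\kappa_i$-blocks are super-diagonal $\pi$- (resp.\ $\sigma(\pi)$-) blocks of the ambient matrix; and condition~(3) of supermaximality descends for the same reason, the $2\times 2$ sub-block pairs entering it for $(\kappa_i,\tau_i)$ being among those for $(\pi,\sigma)$. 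Once this is in place, Lemma~\ref{supermax} — whose proof is uniform in the matrix size, so applies with $p, G$ replaced by $\hat p_i, \GL_{\hat p_i}$ — shows that this supermaximally general pair, lying in $\overline{\GL_{\hat p_i}^2 \cdot \cc_{(\hat p_i),\id}}$, fits into a chain of inclusions terminating at $((\hat p_i),\id)$; in particular $(\kappa_i,\tau_i) \preceq ((\hat p_i),\id)$, as required. I expect the main obstacle to be precisely this bookkeeping: confirming that the permutation reordering the diagonal blocks of $\hat\bB'_{\hat\sigma(i),\hat\sigma(i)}$ is the $\tau_i$ from \eqref{compatible}, and carefully checking that all three defining conditions of supermaximal generality genuinely restrict to the sub-block pair.
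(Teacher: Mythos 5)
Your argument is correct, and it is essentially the paper's proof run in the contrapositive direction. The paper supposes that $(\kappa_i,\tau_i)\not\preceq((\hat p_i),\id)$ for some $i$, then constructs $(\bA,\bA')\in\cc_{\pi,\sigma}$ with $\bA,\bA'$ maximally general and with the $i$th diagonal $\hat\pi$-block pair supermaximally general; Lemma~\ref{supermax} (contrapositively) shows that pair is outside the graph closure, and then Lemma~\ref{clCpisigma} (together with Lemma~\ref{maxgen}, as in Lemma~\ref{reducer}(b)) forces $(\bA,\bA')\notin\overline{G^2\cdot\cc_{\hat\pi,\hat\sigma}}$. You instead assume $(\pi,\sigma)\preceq(\hat\pi,\hat\sigma)$, take $(\bA,\bA')$ supermaximally general for $(\pi,\sigma)$, read off from Lemma~\ref{reducer} that each $(\hat\bB_{i,i},\hat\bB'_{\hat\sigma(i),\hat\sigma(i)})$ lies in the graph closure, check that supermaximal generality restricts to the diagonal $\hat\pi$-sub-blocks, and apply Lemma~\ref{supermax} directly. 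Both routes rest on the same two pillars — supermaximally general pairs and Lemmas~\ref{clCpisigma}/\ref{reducer} and~\ref{supermax} — and on the same bookkeeping fact: that the three conditions defining supermaximal generality for $(\pi,\sigma)$ restrict, via the compatibility identity $\sigma=(\boldsymbol{\tau}^{\hat\sigma})^*$, to the corresponding conditions for $(\kappa_i,\tau_i)$ on each diagonal $\hat\pi$-block. (Worth noting: since the relevant adjacencies $\sigma(l+1)=\sigma(l)+1$ inside block $i$ correspond exactly to $\tau_i(l'+1)=\tau_i(l')+1$, condition~(3) genuinely restricts; boundary indices $l,l+1$ straddling two blocks impose constraints on the ambient pair but are irrelevant to the sub-block, so nothing is lost. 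Also keep in mind the $n=2$ modification to condition~(3), which carries over unchanged since $\hat p_i$-blocks inherit the same part sizes.) The difference from the paper is only the direction of the implication and, correspondingly, whether one constructs the sub-block pair to be supermaximally general (paper) or deduces this from supermaximal generality of the ambient pair (yours); the latter is the slightly cleaner formulation, but the content is the same.
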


\begin{proof} Only the necessity of the displayed equation needs to be shown. Suppose for some $i$ it does not hold. Choose $(\bA,\bA') \in \cc_{\pi, \sigma}$ with arbitrary simple diagonal $\pi$-blocks $\bB_{j,j} = \bB'_{\sigma(j),\sigma(j)}$ for $j=1, \ldots, k$. We may choose superdiagonal blocks $\bB_{j,j+1}$ and $\bB'_{j,j+1}$ for $j=1, \ldots, k-1$ in such a way that $(\hat{\bB}_{i,i}, \hat{\bB'}_{\hat{\sigma}(i), \hat{\sigma}(i)})$ is supermaximally general, and $\bA$ and $\bA'$ are maximally general. By Lemma \ref{supermax} we have $(\hat{\bB}_{i,i},\hat{\bB'}_{\hat{\sigma}(i), \hat{\sigma}(i)}) \in \cc_{\kappa_i, \tau_i} \setminus \overline{\GL_{p_i}^2 \cdot \cc_{p_i, \id}}$. By Lemma \ref{clCpisigma} we find that $(\bA,\bA') \not \in \overline{G^2 \cdot \cc_{\hat{\pi}, \hat{\sigma}}}$, so that  $(\pi, \sigma) \not\preceq (\hat{\pi}, \hat{\sigma})$.
\end{proof}

\begin{proof}[Proof of Theorem \ref{saturated}]
The if direction is clear. Suppose $(\pi, \sigma) \preceq (\hat{\pi}, \hat{\sigma})$. Then by Proposition \ref{compatconverse} the pairs are compatible and, in the notation of Proposition \ref{compatibility} we have $$(\kappa_i, \tau_i) \preceq ((\hat{p}_i), \id)$$ for all $i=1, \ldots, \hat{k}$. By Remark \ref{supermaxcor} we have chains  
 \[(\kappa_i, \tau_i) = (\kappa_{i,0}, \tau_{i,0}) \preceq (\kappa_{i,1},\tau_{i,1}) \preceq \cdots \preceq (\kappa_{k_i-1}, \tau_{i,k_i - 1}) = ((p_i), \id)\] for $i=1, \ldots, \hat{k}$.
Define, for each $i=0, \ldots, k_1-1$
\[\pi_i = (\kappa_{1,i}, \kappa_2, \ldots, \kappa_{\hat{k}})\]
and for each $j=1, \ldots, \hat{k}$ and each $i=1, \ldots, k_j-1$
\[\pi_{i+k_1+\cdots+k_{j-1}-1} = (p_1,p_2, \ldots, \kappa_{j,i},\kappa_{j+1}, \ldots, \kappa_{\hat{k}}).\]
Further for each $i=0, \ldots, k_1-1$ set
\[\sigma_i = ((\tau_{1,i}, \tau_2, \ldots, \tau_{\hat{k}})^{\hat{\sigma}})^*\]
and for each $j=1, \ldots, \hat{k}$ and each $i=1, \ldots, k_j-1$
\[\sigma_{i+k_1+\cdots+k_{j-1}-1} = ((\id,\id,\ldots, \tau_{j,i}, \tau_{j+1}, \ldots, \tau_{\hat{k}})^{\hat{\sigma}})^*.\]
Then we have a chain
\[(\pi, \sigma) = (\pi_0, \sigma_0) \preceq  (\pi_1,\sigma_1) \preceq \cdots \preceq (\pi_{k-1}, \sigma_{k - 1}) = (\hat{\pi}, \hat{\sigma})\] as required.
\end{proof}

\section{Examples: $n \geq 3$}
We want to find the irreducible components of $\mathcal{S}_{G,\vv}$. As shown in the previous, section, these are the orbit spaces $\overline{G^2 \cdot \cc_{\pi, \sigma}}$ where $(\pi, \sigma) \in \mathcal{P}_{p,n}$ is maximal. It turns out that if $n \geq 3$ then the maximal elements are easy to describe - indeed, maximality depends only on the permutation $\sigma$. 

We will say $\sigma \in S_k$ is a {\it partial reversal} if $\sigma$ in square bracket notation contains a substring $[l+1,l]$, i.e. if for some $1  \leq l< k$ we have \begin{equation}\label{partialreversal} \sigma(l+1)+1 = \sigma(l).\end{equation}

\begin{prop}\label{nonmax} Let $n \geq 3$ and let $(\pi, \sigma) \in \mathcal{P}_{p,n}$. Then $(\pi, \sigma)$ is maximal if and only if $\sigma$ is not a partial reversal.
\end{prop}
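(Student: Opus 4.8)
The plan is to prove both directions separately, using the structural results already established.

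\medskip

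\textbf{Non-maximality when $\sigma$ is a partial reversal.} Suppose $\sigma$ contains a substring $[l+1,l]$, i.e.\ $\sigma(l+1)+1 = \sigma(l)$ for some $1 \leq l < k$. I would form a new pair $(\hat\pi, \hat\sigma)$ of rank $k-1$ by merging the $l$th and $(l+1)$th blocks of $\pi$ (so $\hat p_l = p_l + p_{l+1}$), and correspondingly collapsing $\sigma$: the two consecutive values $\sigma(l) = m+1, \sigma(l+1) = m$ get merged into a single value, producing $\hat\sigma \in S_{k-1}$. One then checks directly that $(\pi, \sigma)$ and $(\hat\pi, \hat\sigma)$ are compatible in the sense of \eqref{compatible}: here $k_i = 1$ for all $i$ except the block coming from the merge, where $k_i = 2$, and $\tau_i = (12)$ on that factor. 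The defining formula \eqref{compatible first} for compatibility then holds precisely because the merged values of $\sigma$ were $m+1, m$ in that order, matching $\tau = (12)$. Now apply Proposition \ref{compatibility} (or Proposition \ref{compatconverse}): compatibility plus the condition $(\kappa_i, \tau_i) \preceq ((\hat p_i), \id)$ for all $i$ gives $(\pi,\sigma) \preceq (\hat\pi, \hat\sigma)$. For the blocks with $k_i = 1$ this condition is trivial; for the one block with $k_i = 2$, $\kappa_i = (p_l, p_{l+1})$ and $\tau_i = (12)$, so the condition reads $((p_l, p_{l+1}), (12)) \preceq ((p_l+p_{l+1}), \id)$, which is exactly Lemma \ref{inclusions}. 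Hence $(\pi, \sigma)$ is strictly below $(\hat\pi, \hat\sigma)$ and so not maximal.

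\medskip

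\textbf{Maximality when $\sigma$ is not a partial reversal.} Here I must show that if $(\pi, \sigma) \preceq (\hat\pi, \hat\sigma)$ with $(\hat\pi, \hat\sigma)$ of strictly smaller rank, then $\sigma$ \emph{is} a partial reversal. By Theorem \ref{saturated} it suffices to treat the case where $\hat k = k - 1$, i.e.\ $\hat\pi$ is obtained from $\pi$ by a single merge of two adjacent blocks, say positions $j, j+1$. By Proposition \ref{compatconverse}, $(\pi,\sigma)$ and $(\hat\pi, \hat\sigma)$ are compatible and $(\kappa_i, \tau_i) \preceq ((\hat p_i), \id)$ for each $i$. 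For all $i$ except the merged one we have $k_i = 1$ and $\kappa_i = (\hat p_i)$, $\tau_i = \id$, vacuously. For the merged index $i_0$ we have $k_{i_0} = 2$, $\kappa_{i_0} = (p_j, p_{j+1})$ and $\tau_{i_0} \in S_2$. The point is that $(\kappa_{i_0}, \tau_{i_0}) \preceq ((\hat p_{i_0}), \id)$ forces $\tau_{i_0} = (12)$: if $\tau_{i_0} = \id$ then $(\kappa_{i_0}, \id) \not\preceq ((\hat p_{i_0}), \id)$ since $n \geq 3$ (this is the corollary following Proposition \ref{newrkcondition}, which states the inclusion $((p_1,p_2),\id) \preceq ((p),\id)$ holds only for $n = p = 2$). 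So $\tau_{i_0} = (12)$. Feeding $\tau = (\id, \ldots, \id, (12), \id, \ldots, \id)$ (with the transposition in slot $i_0$) through \eqref{compatible first}: for the two indices $l, l+1$ of $\pi$ lying in block $i_0$, the formula gives $\sigma(l)$ and $\sigma(l+1)$ equal to the two consecutive integers $k_1 + \cdots + k_{i_0 - 1} + 1$ and $k_1 + \cdots + k_{i_0-1} + 2$ but in the order dictated by $\tau_{i_0} = (12)$, i.e.\ $\sigma(l)$ is the larger and $\sigma(l+1)$ the smaller, so $\sigma(l+1)+1 = \sigma(l)$. This is precisely condition \eqref{partialreversal}, so $\sigma$ is a partial reversal.

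\medskip

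\textbf{Main obstacle.} The bookkeeping in unwinding the compatibility formula \eqref{compatible first} / \eqref{compatible} is the delicate part: one must be careful that the indices $l, l+1$ of $\pi$ sitting inside the merged $\hat\pi$-block really do get sent by $\sigma$ to two \emph{consecutive} integers, which relies on the fact that $\hat\sigma$ acts by permuting whole blocks of consecutive integers and that within the merged block the only nontrivial permutation available is $(12)$. The other genuinely substantive input — that $\tau_{i_0}$ cannot be the identity — is not a calculation but rather the already-proved fact that for $n \geq 3$ a rank-$2$ partition with $\sigma = \id$ gives a component not contained in the graph closure; I would simply cite the corollary to Proposition \ref{newrkcondition}. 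Everything else is assembling chains via Theorem \ref{saturated} and Proposition \ref{compatconverse}.
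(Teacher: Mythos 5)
Your proof is correct and takes essentially the same route as the paper's: in the forward direction you merge the two blocks witnessing the partial reversal, collapse $\sigma$ to $\hat\sigma$, and apply Proposition~\ref{compatibility} together with Lemma~\ref{inclusions}; in the reverse direction you reduce to a single merge via Theorem~\ref{saturated}, invoke Proposition~\ref{compatconverse}, and use the corollary to Proposition~\ref{newrkcondition} to force $\tau_{i_0}=(12)$, which is exactly the paper's argument (the paper cites Proposition~\ref{newrkcondition} directly rather than its corollary, but the content is the same). Your unwinding of \eqref{compatible first} to exhibit the substring $[l+1,l]$ is in fact a little more explicit than the paper's, which simply asserts the conclusion.
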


\begin{proof}
Suppose $\sigma \in S_k$ is a partial reversal with $l$ satisfying \eqref{partialreversal}. Define
\[\hat{\pi} = (p_1, p_2, \ldots, p_{l-1}, p_l+p_{l+1}, p_{l+2}, \ldots, p_k) \in \Pi(p,k-1)\]
and
\[\hat{\sigma} = [\sigma(1), \sigma(2), \ldots, \sigma(l-1), \sigma(l+1), \ldots, \sigma(k)] \in S_{k-1}.\]
Then $\pi \leq \hat{\pi}$ and $$\sigma = ((\id, \id, \ldots, \id, (12), \id, \ldots, \id)^{\hat{\sigma}})^*,$$ and $((p_l,p_{l+1}), (12)) \preceq ((p_l+p_{l+1}), \id)$. By Proposition \ref{compatibility} we have $(\pi, \sigma) \preceq (\hat{\pi}, \hat{\sigma})$ and so $(\pi, \sigma)$ is a not maximal. 

Conversely, suppose we have an inclusion $(\pi, \sigma) \preceq (\hat{\pi}, \hat{\sigma})$ for some $(\hat{\pi}, \hat{\sigma}) \in \mathcal{P}_{p,n}$. 
By Theorem \ref{saturated} we may assume that $\hat{k} = k-1$. Then by Proposition \ref{compatibility} we have $\sigma = ((\id, \id, \ldots, \id, \kappa, \id, \ldots, \id)^{\hat{\sigma}})^*$, where the $\kappa$ appears at index $l$, and $((p_l,p_{l+1}),\kappa) \preceq (p_l+p_{l+1}, \id)$. Since $n \geq 3$, by Proposition \ref{newrkcondition} we must have $\kappa=(12)$. Then $\sigma$ in square bracket notation contains the substring $[l+1,l]$. 
\end{proof}

\begin{Theorem}\label{sepvardecomp} For all $p \geq 2$ and $n \geq 3$ we have a complete irredundant decomposition 
\begin{equation}\label{betterdecomp} \mathcal{S}_{G,\vv} = \bigcup_{k=1}^p \bigcup_{\pi \in \Pi(p,k)} \bigcup_{\sigma \in T_k} \overline{G^2 \cdot \cc_{\pi,\sigma}},\end{equation}
where $T_k$ is the subset of $S_k$ consisting of permutations which are not partial reversals.
\end{Theorem}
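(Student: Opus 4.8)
The plan is to obtain Theorem~\ref{sepvardecomp} as a bookkeeping consequence of Proposition~\ref{nonmax} together with the structural facts about $\mathcal{P}_{p,n}$ already established. Recall from the discussion following \eqref{fundamentaldecompposetversion} that $\overline{G^2 \cdot \cc_{\pi,\sigma}}$ is an irreducible component of $\mathcal{S}_{G,\vv}$ exactly when $(\pi,\sigma)$ is a maximal element of $\mathcal{P}_{p,n}$, and that \eqref{fundamentaldecompposetversion} presents $\mathcal{S}_{G,\vv}$ as the union of \emph{all} the closed irreducible sets $\overline{G^2 \cdot \cc_{\pi,\sigma}}$. By Proposition~\ref{nonmax}, for $n \geq 3$ the pair $(\pi,\sigma)$ with $|\pi| = k$ is maximal if and only if $\sigma \in T_k$. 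Hence the right-hand side of \eqref{betterdecomp} is precisely the collection of irreducible components of $\mathcal{S}_{G,\vv}$, and the theorem is the statement that the decomposition of a variety into its irreducible components is complete and irredundant.

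First I would check completeness. Since $\mathcal{S}_{G,\vv}$ is the union of its finitely many irreducible components, and every such component is of the form $\overline{G^2 \cdot \cc_{\pi,\sigma}}$ with $(\pi,\sigma)$ maximal, i.e. with $\sigma \in T_{|\pi|}$, the union on the right of \eqref{betterdecomp} contains all components and hence contains $\mathcal{S}_{G,\vv}$; conversely each term on the right is one of the sets appearing in \eqref{fundamentaldecompposetversion} and so is contained in $\mathcal{S}_{G,\vv}$. Therefore equality in \eqref{betterdecomp} holds.

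Next I would check irredundancy. By Corollary~\ref{distinct} the sets $\overline{G^2 \cdot \cc_{\pi,\sigma}}$ occurring on the right of \eqref{betterdecomp} are pairwise distinct, and since each corresponds to a maximal element of $\mathcal{P}_{p,n}$, for distinct indices $(\pi,\sigma) \neq (\hat{\pi},\hat{\sigma})$ in this range we have $\overline{G^2 \cdot \cc_{\pi,\sigma}} \not\subseteq \overline{G^2 \cdot \cc_{\hat{\pi},\hat{\sigma}}}$ by the very definition of $\preceq$. As each $\overline{G^2 \cdot \cc_{\pi,\sigma}}$ is irreducible, it cannot lie inside the union of the remaining finitely many terms, since an irreducible set contained in a finite union of closed sets lies in one of them. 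Hence no term of \eqref{betterdecomp} may be deleted, which is the asserted irredundancy.

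There is essentially no new obstacle here: all the substance has already gone into Proposition~\ref{nonmax} and, through it, into Theorem~\ref{saturated} and Proposition~\ref{newrkcondition}. The only point requiring care is that the indexing set $\bigsqcup_{k=1}^p (\Pi(p,k) \times T_k)$ genuinely matches the set of maximal elements of $\mathcal{P}_{p,n}$ — which is exactly the content of Proposition~\ref{nonmax} — so that no component is omitted from \eqref{betterdecomp} and none is listed twice, the latter being guaranteed by Corollary~\ref{distinct}.
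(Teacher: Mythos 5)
Your proof is correct and is exactly the (implicit) argument the paper intends: the paper states Theorem \ref{sepvardecomp} immediately after Proposition \ref{nonmax} with no written proof, relying precisely on the observations you spell out --- that the irreducible components of $\mathcal{S}_{G,\vv}$ are the $\overline{G^2\cdot\cc_{\pi,\sigma}}$ for $(\pi,\sigma)$ maximal in $\mathcal{P}_{p,n}$, that Proposition \ref{nonmax} identifies these as the pairs with $\sigma\in T_{|\pi|}$, and that Corollary \ref{distinct} guarantees no repetition.
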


The problem of enumerating $|T_k|$ is a classical one in enumerative combinatorics. One may show that it satisfies the recurrence
\begin{equation}\label{TkRecurrence} |T_k| = (k-1)|T_{k-1}| + (k-2)|T_{k-2}|
\end{equation} 
for $k \geq 3$, and obviously $|T_1|=|T_2|=1$. To see this, suppose $\sigma \in T_k$ is given in square bracket form, and consider the permutation in $S_{k-1}$ obtained by removing the entry $k$ from the list. The resulting permutation is either in $T_{k-1}$ or it is not. For each element of $T_{k-1}$ we can construct $(k-1)$ different elements of $T_k$ by inserting the entry $k$ anywhere except immediately prior to $k-1$. On the other hand, if when we remove $k$ we get an element of $S_{k-1} \setminus T_{k-1}$, that element must have exactly one substring of the form $[l+1,l]$. The entry immediately after this substring cannot be $l-1$, thus if we remove $l$ and decrease each entry $>l$ in the string by 1 we obtain an element of $T_{k-2}$. Given an element of $T_{k-2}$, we may construct an element of $S_{k-1}$ by replacing any of its $k-2$ entries $l$ by the string $[l+1,l]$ and increasing each entry $>l$ by 1, and for each such element we obtain a unique element of $T_k$ by inserting $k$ between $l+1$ and $l$.

One may prove by induction that for $k \geq 2$ we have
\begin{equation}
|T_k| = \sum_{q=1}^{k-1} \binom{k-1}{q}(-1)^q(k-q)!.
\end{equation}   

The first few values of $|T_k|$ are $1,1,3,11,53,309$, and it is A000255 in the OEIS \cite{OEIS}.

For $n \geq 3$ the number of components of $\mathcal{S}_{G,\vv}$ of dimension $(n+1)p^2-k$ is given by
\[\binom{p-1}{k-1}|T_k|\] with binomial term here being $|\Pi(p,k)|$. In particular, $T_p$ is the number of components of $\mathcal{S}_{G, \vv}$ with (minimum possible) dimension $(n+1)p^2-p$. Since it is easy to see that $|T_p| \geq 1$ for every $p$ (for example, the identity is in $T_p$ for every $p$), we get that $\sdim(\mathcal{S}_{G,\vv}) = (n+1)p^2-p$. This completes the proof of Theorem \ref{mainsepvar} in the case $n \geq 3$.

We conclude this section with some explicit examples of the posets $\mathcal{P}_{p,n}$ for $n \geq 3$ and the associated decompositions. To save space we use noncommutative multiplicative notation for partitions, so for instance $1^3 \cdot 2 \cdot 1$ means $(1,1,1,2,1)$.

\begin{eg}\label{decomp2}
The poset $\mathcal{P}_{2,n}$ is given in Figure \ref{decomp2figure}. 
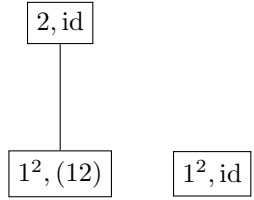
\begin{figure}[ht]\caption{The poset $\mathcal{P}_{2,n}$, $n \geq 2$.}\label{decomp2figure}
\begin{center}
\begin{tikzpicture}[every node/.style={draw}]
  \path[yshift=1.5cm,shape=rectangle];
\node(A) at (0,2){$2, \id$};
\node(B) at (0,0){$1^2, (12)$};
\node(C) at (2,0){$1^2, \id$};
\draw (A)->(B);
\end{tikzpicture}
\end{center}
\end{figure}

Note that the identity in $S_k$ is not a partial reversal, for all $k$. $(12) \in S_2$ is a partial reversal. Therefore we have an irredundant decomposition
 \[\mathcal{S}_{G,\vv} =  \overline{G^2 \cdot \cc_{(2),\id}} \cup G^2 \cdot \cc_{(1,1),\id}\] for $p=2$ and any $n \geq 3$.
The dimensions of these components are $4n+3$ and $4n+2$ respectively. This was first shown in \cite{ElmerMatrixSepVar}, where it was also shown that the intersection of these components has dimension $3n+4$.
\end{eg}

\begin{eg}\label{decomp3}
The poset $\mathcal{P}_{3,n}$ is given in Figure \ref{decomp3figure}.

\begin{figure}[ht]\caption{The poset $\mathcal{P}_{3,n}$, $n \geq 3$.}\label{decomp3figure}
\begin{center}
\begin{tikzpicture}[every node/.style={draw},scale=0.8]
  \path[yshift=1.5cm,shape=rectangle];
\node(A) at (0,2){$2 \cdot 1, \id$};
\node(B) at (0,0){$1^3, (12)$};
\node(C) at (2.5,0){$1^3, \id$};
\node(D) at (5,2){$1 \cdot 2, \id$};
\node(E) at (5,0){$1^3, (23)$};
\node(F) at (-5,4){$3, \id$};
\node(G) at (-3.5,2){$2 \cdot 1,(12)$};
\node(H) at (-6.5,2){$1 \cdot 2,(12)$};
\node(I) at (-5,0){$1^3,(13)$};
\node(J) at (-2.5,0){$1^3,(123)$};
\node(K) at (-7.5,0){$1^3,(132)$};
\draw (A)->(B);
\draw (D)->(E);
\draw(F)->(G);
\draw(F)->(H);
\draw(G)->(I);
\draw(H)->(I);
\end{tikzpicture}
\end{center}
\end{figure}
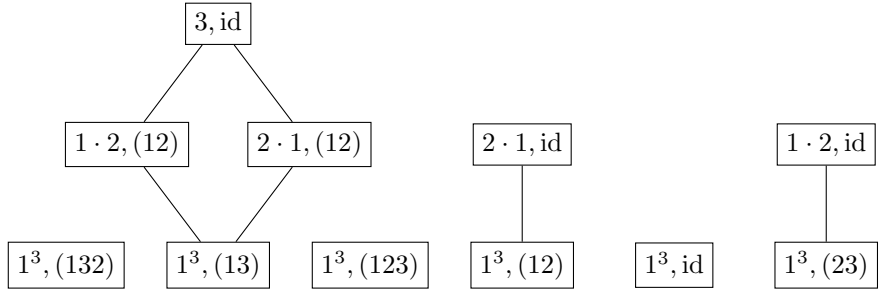
We have an irredundant decomposition
\begin{align*}
\mathcal{S}_{G,\vv} &= \overline{G^2 \cdot \cc_{(3),\id}} \cup \overline{G^2 \cdot \cc_{(2,1),\id}} \cup 
\overline{G^2 \cdot \cc_{(1,2),\id}} \\ &\cup {G^2 \cdot \cc_{(1,1,1),\id}} 
\cup {G^2 \cdot \cc_{(1,1,1),(123)}} \cup {G^2 \cdot \cc_{(1,1,1),(132)}}
\end{align*}
for $p=3$ and any $n \geq 3$. The first component here has dimension $9n+8$, the next two $9n+7$ and the remaining three dimension $9n+6$.
\end{eg}

\begin{eg}\label{decomp4}
The structure of the poset $\mathcal{P}_{4,n}$ for any $n \geq 3$ is given in Figure \ref{poset4} overleaf. $\mathcal{S}_{G,\vv}$ has an irredundant decomposition with one component of dimension $16n+15$, three of dimension $16n+14$, nine  of dimension $16n+13$ and eleven of dimension $16n+12$.  
\end{eg}

\begin{landscape}
\thispagestyle{empty}
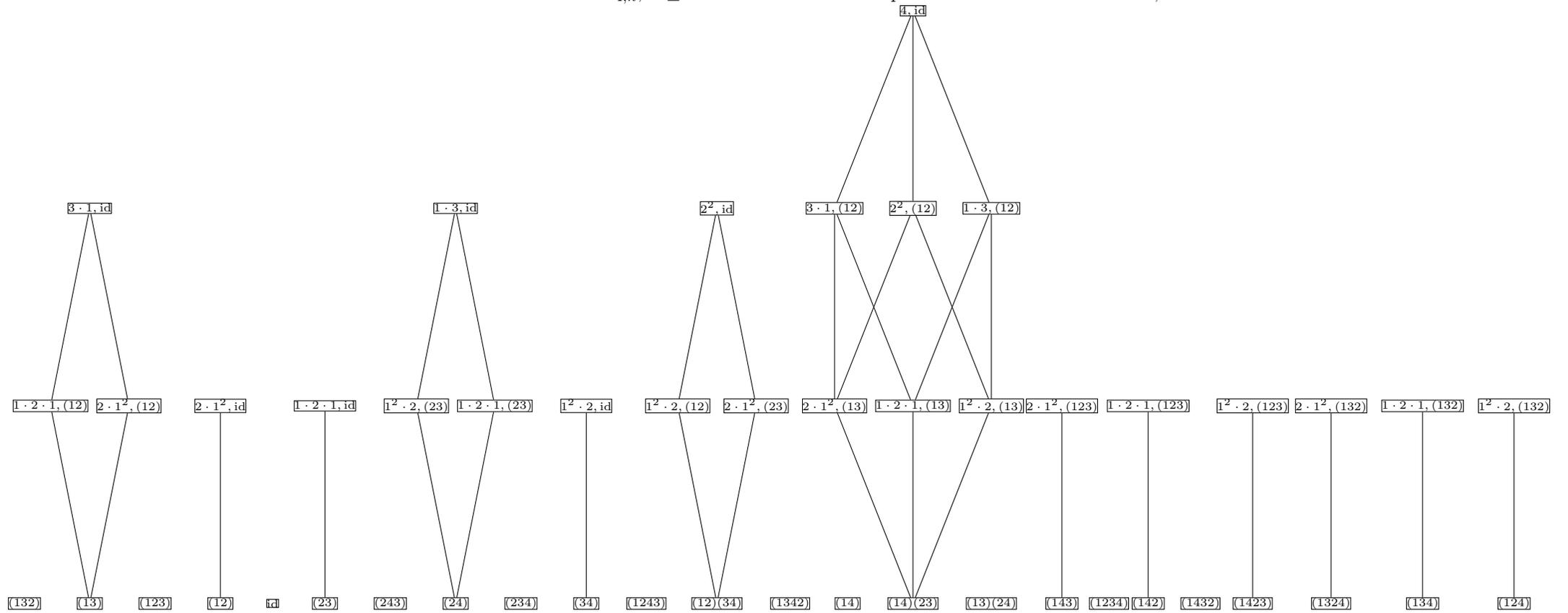
\begin{figure}
\centering 
\kern5.5pt

\caption{The Poset $\mathcal{P}_{4,n}$, $n \geq 3$. We have omitted the partitions from the bottom row, which are all $1^4$.}\label{poset4}
\begin{tiny}
\noindent\hspace{-5.2cm}\begin{tikzpicture}[every node/.style={inner sep=0pt, draw},xscale=0.48,yscale=1.8]
  \path[shape=rectangle];
\node(A) at (0,2){$2 \cdot 1^2, \id$};
\node(B) at (0,0){$(12)$};
\node(C) at (2,0){$\id$};
\node(D) at (4,2){$1 \cdot 2 \cdot 1, \id$};
\node(E) at (4,0){$(23)$};
\node(F) at (-5,4){$3\cdot 1, \id$};
\node(G) at (-3.5,2){$2 \cdot 1^2,(12)$};
\node(H) at (-6.5,2){$1 \cdot 2\cdot 1,(12)$};
\node(I) at (-5,0){$(13)$};
\node(J) at (-2.5,0){$(123)$};
\node(K) at (-7.5,0){$(132)$};
\draw (A)->(B);
\draw (D)->(E);
\draw(F)->(G);
\draw(F)->(H);
\draw(G)->(I);
\draw(H)->(I);
\node(O) at (14,2){$1^2 \cdot 2, \id$};
\node(P) at (14,0){$ (34)$};
\node(Q) at (9,4){$1 \cdot 3, \id$};
\node(R) at (10.5,2){$1 \cdot 2 \cdot 1,(23)$};
\node(S) at (7.5,2){$1^2 \cdot 2,(23)$};
\node(T) at (9,0){$(24)$};
\node(U) at (11.5,0){$(234)$};
\node(V) at (6.5,0){$(243)$};
\draw (O)->(P);
\draw(Q)->(R);
\draw(Q)->(S);
\draw(R)->(T);
\draw(S)->(T);
\node(U) at (19,4){$2^2, \id$};
\node(V) at (20.5,2){$2 \cdot 1^2,(23)$};
\node(W) at (17.5,2){$1^2 \cdot 2,(12)$};
\node(X) at (19,0){$(12)(34)$};
\draw(U)->(V);
\draw(U)->(W);
\draw(V)->(X);
\draw(W)->(X);
\node(Y) at (26.5,6){$4, \id$};
\node(Z) at (26.5,4){$2^2, (12)$};
\node(AA) at (23.5,4){$3 \cdot 1, (12)$};
\node(AB) at (29.5,4){$1 \cdot 3, (12)$};
\node(AC) at (23.5,2){$2 \cdot 1^2, (13)$};
\node(AD) at (29.5,2){$1^2 \cdot 2, (13)$};
\node(AE) at (26.5,0){$(14)(23)$};
\node(AF) at (26.5,2){$1 \cdot 2 \cdot 1, (13)$};
\draw (Y)->(Z);
\draw(Y)->(AA);
\draw(Y)->(AB);
\draw(AA)->(AC);
\draw(Z)->(AC);
\draw(AB)->(AD);
\draw(Z)->(AD);
\draw(AC)->(AE);
\draw(AD)->(AE);
\draw(AF)->(AE);
\draw(AA)->(AF);
\draw(AB)->(AF);
\node(AG) at (32.2,2){$2 \cdot 1^2, (123)$};
\node(AH) at (32.2,0){$(143)$};
\draw (AG)->(AH);
\node(AI) at (35.5,2){$1 \cdot 2 \cdot  1, (123)$};
\node(AJ) at (35.5,0){$(142)$};
\draw (AI)->(AJ);
\node(AI) at (39.5,2){$1^2 \cdot 2, (123)$};
\node(AJ) at (39.5,0){$(1423)$};
\draw (AI)->(AJ);
\node(AK) at (42.5,2){$2 \cdot 1^2, (132)$};
\node(AL) at (42.5,0){$(1324)$};
\draw (AK)->(AL);
\node(AM) at (46,2){$1 \cdot 2 \cdot  1, (132)$};
\node(AN) at (46,0){$(134)$};
\draw (AM)->(AN);
\node(AO) at (49.5,2){$1^2 \cdot 2, (132)$};
\node(AP) at (49.5,0){$(124)$};
\draw (AO)->(AP);

\node(AQ) at (24,0){$(14)$};
\node(AR) at (34,0){$(1234)$};
\node(AS) at (37.5,0){$(1432)$};
\node(AT) at (29.5,0){$(13)(24)$};
\node(AU) at (21.8,0){$(1342)$};
\node(AV) at (16.3,0){$(1243)$};
\end{tikzpicture}
\end{tiny}
\end{figure}
\end{landscape}

\section{Examples: $n=2$}

In this section we fix $n=2$. The problem of finding the maximal elements of $\mathcal{P}_{p,n}$ in this case does not seem to have a simple answer. In general, the question of whether $(\pi, \sigma)$ is maximal depends on both $\pi$ and $\sigma$. The analogue of Proposition \ref{nonmax}, proved in almost exactly the same way, is as follows:

\begin{prop} Let $(\pi, \sigma) \in \mathcal{P}_{p,2}$. Then $(\pi, \sigma)$ is maximal if and only if $\sigma$ in square bracket notation does not contain a substring $[l+1,l]$ for any $l$, and in addition does not contain a substring $[l,l+1]$ for any $l$ for which $p_l=p_{l+1}=1$.
\end{prop}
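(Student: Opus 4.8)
The plan is to follow the proof of Proposition~\ref{nonmax} almost verbatim; the one new ingredient is the Corollary following Proposition~\ref{newrkcondition}, which supplies, when $n=2$, the extra inclusion $((1,1),\id)\preceq((2),\id)$ that is unavailable for $n\geq 3$ and that accounts for the second clause in the statement.

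\emph{A forbidden substring forces non-maximality.} Suppose first that $\sigma(l)=\sigma(l+1)+1$ for some $1\leq l<k$ (a substring $[l+1,l]$). Exactly as in the first half of the proof of Proposition~\ref{nonmax}, set $\hat\pi=(p_1,\ldots,p_{l-1},p_l+p_{l+1},p_{l+2},\ldots,p_k)$ and let $\hat\sigma\in S_{k-1}$ be $\sigma$ with its $l$-th entry removed; then $\pi\leq\hat\pi$, the pairs $(\pi,\sigma)$ and $(\hat\pi,\hat\sigma)$ are compatible, with $\mathbf{\tau}$ having $(12)$ in its $l$-th slot and $\id$ elsewhere, and $((p_l,p_{l+1}),(12))\preceq((p_l+p_{l+1}),\id)$ by Lemma~\ref{inclusions}, so Proposition~\ref{compatibility} gives $(\pi,\sigma)\preceq(\hat\pi,\hat\sigma)$ and $(\pi,\sigma)$ is not maximal. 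Suppose instead $\sigma(l+1)=\sigma(l)+1$ for some $l$ with $p_l=p_{l+1}=1$ (an increasing substring $[\sigma(l),\sigma(l)+1]$ occupying positions whose two $\pi$-parts are both $1$). Perform the same merge of the $l$-th and $(l+1)$-th parts of $\pi$, now with $\mathbf{\tau}=(\id,\ldots,\id)$; since $p_l+p_{l+1}=2$, the Corollary following Proposition~\ref{newrkcondition} gives $((p_l,p_{l+1}),\id)\preceq((2),\id)$ (this is where $n=2$ is used), and Proposition~\ref{compatibility} again yields $(\pi,\sigma)\preceq(\hat\pi,\hat\sigma)$, so $(\pi,\sigma)$ is not maximal.

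\emph{Non-maximality forces a forbidden substring.} Conversely, suppose $(\pi,\sigma)$ is not maximal, so $(\pi,\sigma)\preceq(\hat\pi,\hat\sigma)$ with $(\pi,\sigma)\neq(\hat\pi,\hat\sigma)$; by Theorem~\ref{saturated} we may assume $|\hat\pi|=k-1$. Then $\hat\pi$ is obtained by merging the $l$-th and $(l+1)$-th parts of $\pi$ for a unique $l$, so in the notation of Proposition~\ref{compatibility} we have $k_l=2$, $k_i=1$ for $i\neq l$, $\mathbf{\tau}=(\id,\ldots,\id,\tau_l,\id,\ldots,\id)$ with $\tau_l\in S_2$, and $\sigma=(\mathbf{\tau}^{\hat\sigma})^*$. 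By Proposition~\ref{compatconverse} the auxiliary condition is necessary, so $(\kappa_l,\tau_l)\preceq((\hat p_l),\id)$, i.e. $((p_l,p_{l+1}),\tau_l)\preceq((p_l+p_{l+1}),\id)$. If $\tau_l=(12)$, unwinding the identity $\sigma=(\mathbf{\tau}^{\hat\sigma})^*$ exactly as in Proposition~\ref{nonmax} shows $\sigma$ contains a substring $[m+1,m]$. If $\tau_l=\id$, then, since $n=2$, the Corollary following Proposition~\ref{newrkcondition} forces $p_l+p_{l+1}=2$, hence $p_l=p_{l+1}=1$, and the same unwinding shows $\sigma$ has an increasing consecutive substring at positions $m,m+1$ with $p_m=p_{m+1}=1$. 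Either way $\sigma$ has a forbidden substring, which is the contrapositive.

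The argument is essentially routine once the machinery of Sections~2--3 is in place; the only delicate point is the index bookkeeping in the identity $\sigma=(\mathbf{\tau}^{\hat\sigma})^*$, namely verifying that the two $\pi$-parts controlling an increasing consecutive substring are precisely those occupying its positions in $\sigma$, so that the translation between $\mathbf{\tau}$ and the shape of $\sigma$ matches the one already carried out for the decreasing substring $[m+1,m]$ in the proof of Proposition~\ref{nonmax}.
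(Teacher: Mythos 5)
Your proposal is correct and does exactly what the paper indicates should be done: it reproduces the proof of Proposition~\ref{nonmax} and inserts, at the right point in both directions, the one new inclusion $((1,1),\id)\preceq((2),\id)$ supplied by the Corollary to Proposition~\ref{newrkcondition} when $n=2$, together with the observation (also from that Corollary) that this is the \emph{only} extra inclusion available and only when $p_l+p_{l+1}=2$. Since the paper gives no separate proof of this proposition beyond the remark that it is ``proved in almost exactly the same way'' as Proposition~\ref{nonmax}, your argument is precisely a filled-in version of the intended proof, using Theorem~\ref{saturated}, Propositions~\ref{compatibility} and~\ref{compatconverse}, Lemma~\ref{inclusions}, and the Corollary in the same roles they play for $n\geq 3$.

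One small point worth making explicit for the reader: in the statement the symbol $l$ in ``a substring $[l,l+1]$'' and in ``$p_l=p_{l+1}=1$'' must refer to the \emph{position} of the substring in $\sigma$'s square-bracket form (equivalently, the index of the two $\pi$-parts being merged), not to the values $\sigma(m),\sigma(m)+1$ themselves; your proof correctly works with the position index, and this is forced by the argument (the condition coming out of Proposition~\ref{compatconverse} is on the sizes $p_m,p_{m+1}$ of the merged parts). The paper's wording inherits the same mild index overloading already present in the definition of ``partial reversal,'' and since the only application in Section~5 is to $\pi=(1,\ldots,1)$ the distinction never matters there; but in general $(\pi,\sigma)=((1,1,2),[2,3,1])$ shows the position reading is the correct one.
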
 

We were unable to find a formula for the number of maximal elements of arbitrary rank in $\mathcal{P}_{p,2}$. However, we are most interested in determining the subdimension of $\mathcal{S}_{G, \vv}$, and we can extract enough information to do this.

The above proposition implies in particular, that if $|\pi|=p$, so that $p_i=1$ for all $i=1, \ldots,p$, then $(\sigma, \pi)$ is maximal if and only if $\sigma$ in square bracket notation contains neither a substring $[l,l+1]$ nor a substring $[l+1,l]$. It is easily seen that for $p \geq 4$, at least one such permutation exists, e.g. $[2,4, \ldots, p-1, 1,3, \ldots, p]$ if $p$ is odd and $[2,4, \ldots, p,1,3, \ldots, p-1]$ if $p$ is even. Therefore, for all $p \geq 4$,  $\mathcal{P}_{p,2}$ contains a maximal element of rank $p$. It follows that $\sdim(\mathcal{S}_{G,\vv}) = (n+1)p^2-p$ as required.  
Together with Proposition \ref{sepvardecomp}, this completes the proof of Theorem \ref{mainsepvar}. 

Let $U_p$ be the set of permutations in $S_p$ containing neither a substring $[l,l+1]$ nor $[l+1,l]$. Determining $u_p:=|U_p|$ is another classical problem in enumerative combinatorics, known as {\it Hertzsprung's problem}. The problem is usually expressed less formally in terms of a chessboard: in how many ways can one place $p$ kings on a $p \times p$ chessboard, with one in each row and column, such that no two are diagonally adjacent?
A closed formula for $u_p$ was given by Hertzsprung (see \cite{Claesson}); later Abramson and Moser \cite{AbramsonMoser} showed that
\[u_p = \sum_{k=0}^{p-1} (-1)^k (p-k)! \sum_{i=0}^k \binom{p-k}{i} \binom{p-1-i}{k-i}.\]

The first few terms of the sequence $u_p$ are $1,0,0,2,14,90$, and it is sequence A002464 in the OEIS \cite{OEIS}. The number $u_p$ is the exact number of components of $\mathcal{S}_{G,\M^2_p}$ with codimension $p-1$.

\begin{eg}\label{decomp22}
The poset $\mathcal{P}_{2,2}$ has the following structure:

\begin{figure}[h]\caption{The poset $\mathcal{P}_{2,2}$.}
\begin{center}
\begin{tikzpicture}[every node/.style={draw}]
  \path[yshift=1.5cm,shape=rectangle];
\node(A) at (0,2){$2, \id$};
\node(B) at (0,0){$1^2, (12)$};
\node(C) at (2,0){$1^2, \id$};
\draw (A)->(B);
\draw (A) -> (C);
\end{tikzpicture}
\end{center}
\end{figure}
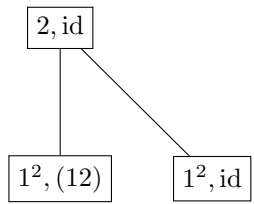
It follows that $\mathcal{S}_{G,\vv} = \overline{G^2 \cdot \cc_{(2), \id}}$ is irreducible when $p=2$ and $n=2$. This was proved earlier in \cite{ElmerMatrixSepVar}.
\end{eg}

\newpage
\begin{eg}\label{decomp32}
The poset $\mathcal{P}_{3,2}$ has the following structure:

\begin{figure}[h]\caption{The poset $\mathcal{P}_{3,2}$.}\label{poset32}
\begin{center}
\begin{tikzpicture}[every node/.style={draw},scale=0.8]
  \path[yshift=1.5cm,shape=rectangle];
\node(A) at (0,2){$2 \cdot 1, \id$};
\node(B) at (0,0){$1^3, (12)$};
\node(C) at (2.5,0){$1^3, \id$};
\node(D) at (5,2){$1 \cdot 2, \id$};
\node(E) at (5,0){$1^3, (23)$};
\node(F) at (-5,4){$3, \id$};
\node(G) at (-3.5,2){$2 \cdot 1,(12)$};
\node(H) at (-6.5,2){$1 \cdot 2,(12)$};
\node(I) at (-5,0){$1^3,(13)$};
\node(J) at (-2.5,0){$1^3,(123)$};
\node(K) at (-7.5,0){$1^3,(132)$};
\draw (A)->(B);
\draw (G) -> (J);
\draw (H) -> (K);
\draw (D)->(E);
\draw(F)->(G);
\draw(F)->(H);
\draw(G)->(I);
\draw(H)->(I);
\draw (A) -> (C);
\draw(C)-> (D);
\end{tikzpicture}
\end{center}
\end{figure}
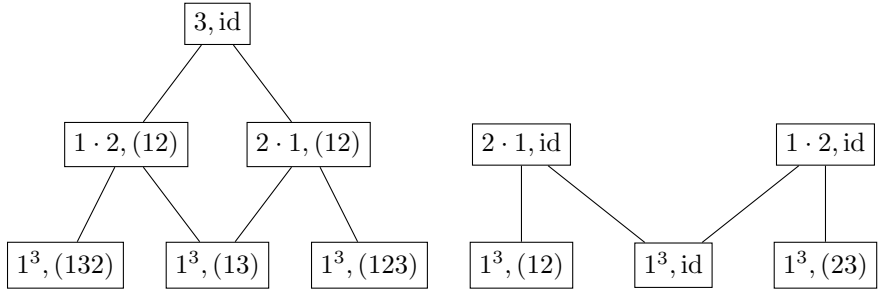
We have an irredundant decomposition
\begin{align*}
\mathcal{S}_{G,\vv} &= \overline{G^2 \cdot \cc_{(3),\id}} \cup \overline{G^2 \cdot \cc_{(2,1),\id}} \cup 
\overline{G^2 \cdot \cc_{(1,2),\id}} 
\end{align*}
for $p=3$ and $n=2$. The first component here has dimension $26$, the other two have dimension $25$. In particular there are no components with codimension 2. This serves as a sanity check - if there were components of codimension 2 then we could conclude $\C[\M_3^2]^G$ contains no polynomial or hypersurface separating set. However, as we mentioned in the introduction, $\C[\M_3^2]^G$ is a hypersurface.
\end{eg}

\begin{eg} The poset $\mathcal{P}_{4,2}$ is given in Figure \ref{poset42} overleaf. It is quite complicated, but note the two maximal elements of rank $4$, namely $(1^4, (1243))$ and $(1^4,(1342))$. The associated irredundant decomposition of $\mathcal{S}_{G,\vv}$ has one component of dimension $47$, three of dimension $46$, five of dimension $45$ and two of dimension $44$.
\end{eg}

\begin{landscape}
\thispagestyle{empty}
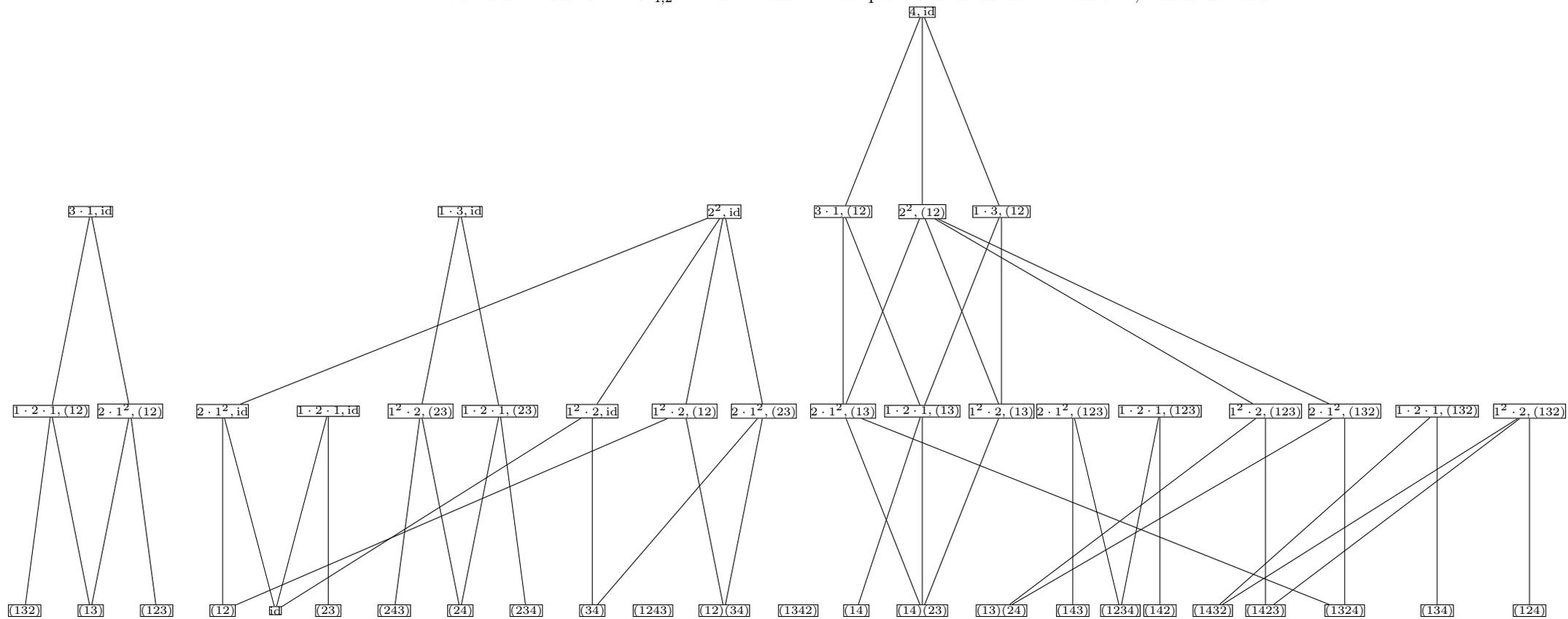
\begin{figure}
\centering 
\kern5.5pt

\caption{The Poset $\mathcal{P}_{4,2}.$ We have omitted the partitions from the bottom row, which are all $1^4$.}\label{poset42}
\begin{tiny}
\noindent\hspace{-5.2cm}\begin{tikzpicture}[every node/.style={inner sep=0pt, draw},xscale=0.48,yscale=1.8]
  \path[shape=rectangle];
\node(A) at (0,2){$2 \cdot 1^2, \id$};
\node(B) at (0,0){$(12)$};
\node(C) at (2,0){$\id$};
\node(D) at (4,2){$1 \cdot 2 \cdot 1, \id$};
\node(E) at (4,0){$(23)$};
\node(F) at (-5,4){$3\cdot 1, \id$};
\node(G) at (-3.5,2){$2 \cdot 1^2,(12)$};
\node(H) at (-6.5,2){$1 \cdot 2\cdot 1,(12)$};
\node(I) at (-5,0){$(13)$};
\node(J) at (-2.5,0){$(123)$};
\node(K) at (-7.5,0){$(132)$};
\draw (A)->(B);
\draw (D)->(E);
\draw(F)->(G);
\draw(F)->(H);
\draw(G)->(I);
\draw(H)->(I);
\node(O) at (14,2){$1^2 \cdot 2, \id$};
\node(P) at (14,0){$ (34)$};
\node(Q) at (9,4){$1 \cdot 3, \id$};
\node(R) at (10.5,2){$1 \cdot 2 \cdot 1,(23)$};
\node(S) at (7.5,2){$1^2 \cdot 2,(23)$};
\node(T) at (9,0){$(24)$};
\node(L) at (11.5,0){$(234)$};
\node(M) at (6.5,0){$(243)$};
\draw (O)->(P);
\draw(Q)->(R);
\draw(Q)->(S);
\draw(R)->(T);
\draw(S)->(T);
\node(U) at (19,4){$2^2, \id$};
\node(V) at (20.5,2){$2 \cdot 1^2,(23)$};
\node(W) at (17.5,2){$1^2 \cdot 2,(12)$};
\node(X) at (19,0){$(12)(34)$};
\draw(U)->(V);
\draw(U)->(W);
\draw(V)->(X);
\draw(W)->(X);
\node(Y) at (26.5,6){$4, \id$};
\node(Z) at (26.5,4){$2^2, (12)$};
\node(AA) at (23.5,4){$3 \cdot 1, (12)$};
\node(AB) at (29.5,4){$1 \cdot 3, (12)$};
\node(AC) at (23.5,2){$2 \cdot 1^2, (13)$};
\node(AD) at (29.5,2){$1^2 \cdot 2, (13)$};
\node(AE) at (26.5,0){$(14)(23)$};
\node(AF) at (26.5,2){$1 \cdot 2 \cdot 1, (13)$};
\draw (Y)->(Z);
\draw(Y)->(AA);
\draw(Y)->(AB);
\draw(AA)->(AC);
\draw(Z)->(AC);
\draw(AB)->(AD);
\draw(Z)->(AD);
\draw(AC)->(AE);
\draw(AD)->(AE);
\draw(AF)->(AE);
\draw(AA)->(AF);
\draw(AB)->(AF);
\node(AG) at (32.2,2){$2 \cdot 1^2, (123)$};
\node(AH) at (32.2,0){$(143)$};
\draw (AG)->(AH);
\node(AI) at (35.5,2){$1 \cdot 2 \cdot  1, (123)$};
\node(AJ) at (35.5,0){$(142)$};
\draw (AI)->(AJ);
\node(AIa) at (39.5,2){$1^2 \cdot 2, (123)$};
\node(AJa) at (39.5,0){$(1423)$};
\draw (AIa)->(AJa);
\node(AK) at (42.5,2){$2 \cdot 1^2, (132)$};
\node(AL) at (42.5,0){$(1324)$};
\draw (AK)->(AL);
\node(AM) at (46,2){$1 \cdot 2 \cdot  1, (132)$};
\node(AN) at (46,0){$(134)$};
\draw (AM)->(AN);
\node(AO) at (49.5,2){$1^2 \cdot 2, (132)$};
\node(AP) at (49.5,0){$(124)$};
\draw (AO)->(AP);

\node(AQ) at (24,0){$(14)$};
\node(AR) at (34,0){$(1234)$};
\node(AS) at (37.5,0){$(1432)$};
\node(AT) at (29.5,0){$(13)(24)$};
\node(AU) at (21.8,0){$(1342)$};
\node(AV) at (16.3,0){$(1243)$};


\draw (H)--(K);
\draw(G) -- (J);
\draw (C)--(O);
\draw(C) -- (A);
\draw (C) -- (D);
\draw (B) -- (W);
\draw(S) -- (M);
\draw (R) -- (L);
\draw (P) -- (V);
\draw (AQ) -- (AF);
\draw (AT) -- (AIa);
\draw (AT) -- (AK);
\draw (AR) -- (AG);
\draw (AR) -- (AI);
\draw (AS) -- (AM);
\draw (AS) -- (AO);
\draw (AJa) -- (AO);
\draw (AL) -- (AC);
\draw (A) -- (U);
\draw (O) -- (U);
\draw (AIa) -- (Z);
\draw (Z) -- (AK);

\end{tikzpicture}
\end{tiny}
\end{figure}
\end{landscape}

\section{Matrix Semi-invariants}

In this section we study a different but related group action. Let $H = \SL_p \times \SL_p$ act on $\M_p$ by the formula
\[(h_1,h_2) \cdot A = h_1 A h_2^{-1}\] and extend the action diagonally to $\vv:=\M_p^n$. We assume $p>1$. The algebra $\C[\vv]^H$ is called the {\it algebra of $p \times p$ matrix semi-invariants.} These are the semi-invariants of the $n$-Kronecker quiver which we described in the introduction with dimension vector $(p,p)$. 

When $n=1$, for any $p$, $\C[\vv]^H$ has a single generator $\det(X_1)$, and when $n=2$, a generating set is given by the coefficients of the polynomial $\det(t_1X_1+t_2X_2) \in \C[\vv][t_1,t_2]$. In both cases, the ring of invariants is polynomial, see, for instance \cite{Happel2, Happel1}.

 For $n \geq 3$ and $p \geq 2$, the algebra of invariants $\C[\vv]^H$ is not a polynomial ring. While an infinite vector space basis is known for all $p$ and $n$, a minimal generating set for $\C[\vv]^H$ for all $n \geq 3$ is known only in the case $p=2$. The generating set, due to Domokos \cite{DomokosPoincare}, has cardinality $n$+$\binom{n}{2}+\binom{n}{4}$. Domokos also showed that this set is a minimal separating set \cite{DomokosSemi}.

The purpose of this section is to prove the following theorem:

\begin{Theorem}\label{semiconj} Let $n \geq 3$ and $p \geq 4$. Then the algebra of matrix semi-invariants contains no polynomial or hypersurface separating set.
\end{Theorem}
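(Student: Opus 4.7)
The plan is to transfer lower bounds on separating sets from the conjugation action of $\GL_p$ on $\M_p^{n-1}$ (treated in Theorem~\ref{mainpxp}) to the left/right multiplication action of $H = \SL_p \times \SL_p$ on $\M_p^n$ via a restriction map, essentially dual to a homomorphism of Domokos.

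Let $\psi \colon \M_p^{n-1} \hookrightarrow \M_p^n$ be the closed embedding $\bB \mapsto (\bB, I_p)$. It intertwines the $\SL_p$-conjugation action on $\M_p^{n-1}$ with the action of the diagonal subgroup $\Delta \SL_p \leq H$ on $\M_p^n$. Since $\C[\M_p^{n-1}]^{\SL_p} = \C[\M_p^{n-1}]^{\GL_p}$, restriction $\psi^* \colon \C[\M_p^n]^H \to \C[\M_p^{n-1}]^{\GL_p}$ sending $f \mapsto f \circ \psi$ is a well-defined algebra homomorphism. The key claim is that if $S \subseteq \C[\M_p^n]^H$ is a separating set, then $\psi^*(S)$ is a separating set for the $\GL_p$-conjugation action on $\M_p^{n-1}$.

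To establish the claim I would prove the orbit-closure identity
\[\overline{H \cdot \psi(\bB)} \cap \psi(\M_p^{n-1}) = \psi\bigl(\overline{\GL_p \cdot \bB}\bigr)\]
for each $\bB \in \M_p^{n-1}$, then combine it with $\overline{H \cdot I_p} = \SL_p$ to reduce any intersection point $\bC \in \overline{H \cdot \psi(\bB)} \cap \overline{H \cdot \psi(\bB')}$ into the slice $\psi(\M_p^{n-1})$ via $H$-translation by $(\bC_n^{-1}, I)$. The non-trivial inclusion $\subseteq$ is an arc argument: any $\psi(\bC') \in \overline{H \cdot \psi(\bB)}$ is the $t = 0$ value of a family $(h_1(t), h_2(t)) \cdot \psi(\bB)$ with $(h_1(t), h_2(t)) \in H(\C((t)))$; the last coordinate $g(t) := h_1(t) h_2(t)^{-1}$ lies in $\SL_p(\C[[t]])$ with $g(0) = I_p$, so left-multiplication by $(g(t)^{-1}, 1) \in H(\C[[t]])$ converts the family into $\psi(h_2(t) \cdot \bB)$, still converging to $\psi(\bC')$, exhibiting $\bC' \in \overline{\SL_p \cdot \bB}$.

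With the claim in hand, Theorem~\ref{mainpxp} applied to $\M_p^{n-1}$ yields $|S| \geq (n-2)p^2 + p$ whenever $n - 1 \geq 3$ and $p \geq 2$, or $n - 1 = 2$ and $p \geq 4$; both cases are covered by the hypothesis $n \geq 3, p \geq 4$. A direct count gives $\dim \C[\M_p^n]^H = np^2 - 2(p^2-1) = (n-2)p^2 + 2$ for $n \geq 3$ (the generic $H$-stabilizer is the finite group $\mu_p$ embedded diagonally), so for $p \geq 4$ we have $|S| \geq (n-2)p^2 + p \geq (n-2)p^2 + 4 > \dim \C[\M_p^n]^H + 1$, excluding both polynomial and hypersurface separating sets. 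The main obstacle is a careful verification of the orbit-closure identity; the arc-theoretic argument sketched above is the decisive step.
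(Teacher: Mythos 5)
Your proof is correct and follows the same overall strategy as the paper: reduce to the conjugation action on $\M_p^{n-1}$ via the slice $\bB\mapsto(\bB,I_p)$, apply Theorem~\ref{mainpxp} (with $n-1$ in place of $n$), and compare with the dimension $\dim\C[\M_p^n]^H=(n-2)p^2+2$ of Proposition~\ref{dimsem}. Where you differ is in how the transfer lemma is established. The paper invokes Domokos's result that $\rho^*\colon\C[\M_p^n]^H\to\C[\M_p^{n-1}]^G$ is surjective \cite[Proposition~4.1]{DomokosRelative} and the ensuing corollary that $\rho^*$ carries separating sets to separating sets \cite[Corollary~6.3]{DomokosSemi}; you instead give a self-contained geometric argument, proving the orbit-closure identity $\overline{H\cdot\psi(\bB)}\cap\psi(\M_p^{n-1})=\psi(\overline{\GL_p\cdot\bB})$ by the arc/valuative argument, and using the $H$-translation by $(\bC_n^{-1},I)$ (together with $\overline{H\cdot I_p}=\SL_p$) to push an intersection point of two orbit closures into the slice. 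This avoids any appeal to the surjectivity of the restriction map, which is a nontrivial algebraic ingredient in Domokos's treatment; what you lose is brevity, since the arc computation must be carried out carefully (in particular the observation that the last coordinate $g(t)=h_1(t)h_2(t)^{-1}$ lies in $\SL_p(\C[[t]])$ with $g(0)=I_p$, so that $g(t)^{-1}$ is again integral). Both routes then close identically with the numerical estimate $|S|\geq(n-2)p^2+p\geq(n-2)p^2+4>\dim\C[\M_p^n]^H+1$ for $p\geq4$.
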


Recalling that the $n$-Kronecker quiver is finite for $n=1$ and tame for $n=2$, this establishes Conjecture \ref{conj} for this quiver.

We will need to know the dimension of $\C[\vv]^H$. Applying Kac's formula \cite[Proposition~4]{KacQuivers} once more we find:
\begin{prop}\label{dimsem}
For all $p \geq 2$ and $n \geq 3$, we have $\dim(\C[\vv]^H) = (n-2)p^2+2$. 
\end{prop}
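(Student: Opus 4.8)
Proof proposal for Proposition \ref{dimsem} (computing $\dim(\C[\vv]^H) = (n-2)p^2+2$).

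The plan is to apply Kac's formula for the Krull dimension of the ring of semi-invariants, exactly as was done for Proposition \ref{dimcvg} and Proposition \ref{dimsem}'s analogue. Recall that Kac's result (see \cite[Proposition~4]{KacQuivers}) computes $\dim \C[\Rep(Q,\alpha)]^{\SL_\alpha}$ in terms of the dimension of the generic fibre of the quotient map, or equivalently as $\dim \Rep(Q,\alpha) - \dim \GL_\alpha + \min_{V} \dim \Aut(V)$, where the minimum is over $V \in \Rep(Q,\alpha)$, adjusted by the rank of the group of characters. Concretely, writing $G' = \GL_p \times \GL_p$ acting on $\vv = \M_p^n$ by $(g_1,g_2)\cdot\bA = (g_1 A_1 g_2^{-1}, \ldots)$, the torus of scalars $\{(\lambda I, \mu I)\}$ has a one-dimensional subtorus $\{(\lambda I, \lambda I)\}$ acting trivially, so the generic stabiliser in $G'$ has dimension at least $1$; for $n \geq 3$ the generic representation of the $n$-Kronecker quiver with dimension vector $(p,p)$ is a brick (indecomposable with trivial endomorphism ring up to scalars — this holds because $(p,p)$ is a positive root only in low cases, but more precisely because $\langle \alpha,\alpha\rangle = 2p^2 - np^2 = (2-n)p^2 < 0$ so the generic $V$ has $\dim\End(V) - \dim\Ext^1(V,V) = (2-n)p^2$, forcing the generic orbit to be as large as possible, hence $\dim\Aut(V) = 1$). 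Therefore the generic $G'$-orbit on $\vv$ has dimension $\dim G' - 1 = 2p^2 - 1$.

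From this I would compute: $\dim \C[\vv]^{\SL_p\times\SL_p}$ differs from $\dim \vv - (\dim G' - 1) = np^2 - 2p^2 + 1 = (n-2)p^2 + 1$ by a correction term accounting for the characters of $G'$ that become nontrivial on passing from $\SL \times \SL$ to $\GL \times \GL$. The group $H = \SL_p \times \SL_p$ kills the $\det$–type characters; equivalently $\C[\vv]^H$ contains the invariant $\det(X_1)$ (and more generally $\det$ of generic linear combinations), which is an $H$-invariant but not a $G'$-invariant. This adds exactly $1$ to the dimension: $\dim \C[\vv]^H = \dim \C[\vv]^{G'} + 1 = (n-2)p^2 + 2$. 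The cleanest way to see the "$+1$" rigorously is to note that $G' / H \cong \Gm$ acts on $\C[\vv]^H$ with $\C[\vv]^{G'} = (\C[\vv]^H)^{\Gm}$, and this $\Gm$-action is nontrivial (witnessed by $\det(X_1)$, which has nonzero weight), so $\dim \C[\vv]^{G'} = \dim \C[\vv]^H - 1$; combined with the computation of $\dim\C[\vv]^{G'}$ via the generic orbit dimension this gives the claim.

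The main obstacle is justifying that the generic representation of the $n$-Kronecker quiver in dimension vector $(p,p)$ has endomorphism algebra reduced to scalars (equivalently, that the generic $\GL_p\times\GL_p$-orbit on $\M_p^n$ has the maximal possible dimension $2p^2-1$) when $n \geq 3$. This is where one genuinely invokes Kac's theory: the Tits form $\langle \alpha,\alpha\rangle = 2p^2 - np^2$ is negative for $n \geq 3$, so $(p,p)$ is an imaginary root, and the canonical decomposition of the generic representation consists of a single indecomposable brick (since $\langle\alpha,\alpha\rangle < 0$ means the generic indecomposable is a "brick" in Kac's sense with $\dim\Ext^1(V,V) = p^2 n - p^2 \cdot 2 + 1 \geq 1$ and $\dim\End(V) = 1$). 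Once this input is in hand, everything else is the bookkeeping above. I would phrase the final write-up as a short direct invocation of Kac's formula paralleling the proof of Proposition \ref{dimcvg}, noting only that the scalar-torus correction accounts for the difference between the exponent $(n-1)p^2$ there (one vertex, $\GL_p$ acting with a $1$-dimensional kernel) and $(n-2)p^2$ here (two vertices, $\GL_p\times\GL_p$ acting with again a $1$-dimensional kernel but one more $p^2$ subtracted for the extra copy of $\GL_p$), plus the $+2$ from the two independent "determinantal" directions: the trivial-weight part giving $+1$ and the $\Gm$-quotient giving the other $+1$.
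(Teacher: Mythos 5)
Your plan to invoke Kac's formula is indeed the paper's strategy (the paper simply cites \cite[Proposition~4]{KacQuivers} with no further detail). However, your fleshed-out version contains a substantive error in the handling of the $\GL$-invariants, and the ``$+1$'' mechanism you describe is wrong.

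You assert $\dim \C[\vv]^{G'} = \dim\vv - (\dim G' - 1) = (n-2)p^2+1$, where $G' = \GL_p\times\GL_p$, by equating the dimension of the quotient with $\dim\vv$ minus the dimension of the generic orbit. That identity requires the generic orbit to be closed, and here it fails badly: the $n$-Kronecker quiver has no oriented cycles, so the only simple representations are the two vertex-simples and the only semisimple representation of dimension vector $(p,p)$ is the zero tuple. Hence the only closed $G'$-orbit on $\M_p^n$ is $\{0\}$, and in fact $\C[\vv]^{G'} = \C$, i.e.\ $\dim\C[\vv]^{G'}=0$ (this is the standard LeBruyn--Procesi observation that $\GL_\alpha$-invariants are generated by traces of oriented cycles). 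Consequently your claimed equality $\dim\C[\vv]^H = \dim\C[\vv]^{G'} + 1$ cannot hold either; the actual gap between the two rings is $(n-2)p^2 + 2$, not $1$. You also write $G'/H \cong \Gm$; in fact $G'/H \cong \Gm\times\Gm$. Even after accounting for the one-dimensional kernel of the $G'$-action, the bound ``$\dim R - \dim R^T \le \dim T$'' you are implicitly using is false for torus actions when the generic orbit is not closed (compare $\Gm$ acting on $\C[x,y]$ with both weights positive, where the invariants are just $\C$), so the ``$+1$'' step is not rigorous.

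The repair is to not detour through $\GL_\alpha$-invariants at all. Kac's formula (or the standard ``extend by the character torus'' argument) gives directly
\[
\dim \C[\Rep(Q,\alpha)]^{\SL_\alpha} \;=\; \dim\Rep(Q,\alpha) \;-\; \dim\GL_\alpha \;+\; \dim(\text{generic }\GL_\alpha\text{-stabiliser}) \;+\; |Q_0| \;-\; 1,
\]
valid when $\alpha$ is a Schur root. For the $n$-Kronecker quiver with $\alpha=(p,p)$ and $n\geq 3$ the Tits form $\langle\alpha,\alpha\rangle = (2-n)p^2 < 0$ makes $\alpha$ an imaginary Schur root, so the generic representation is a brick with $\GL_\alpha$-stabiliser the scalars $\{(\lambda I,\lambda I)\}$ of dimension $1$. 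One therefore gets $np^2 - 2p^2 + 1 + 2 - 1 = (n-2)p^2+2$ in one step. Equivalently, and more geometrically: the generic $H$-orbit is closed with finite stabiliser $\mu_p$, so $\dim(\vv/\!\!/H) = \dim\vv - \dim H = np^2 - (2p^2-2) = (n-2)p^2+2$. Your identification of the Schur-root/brick input is the correct ingredient; it is the bookkeeping via $\C[\vv]^{G'}$ that has to be abandoned.
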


Now the proof of Theorem \ref{semiconj} is a straightforward application of a result of Domokos: 
for $n \geq 2$ consider the morphism $\rho: \M_p^n \rightarrow \M_p^{n+1}$ given by
\[\rho (A_1,A_2, \ldots, A_{n-1}) =  (A_1,A_2, \ldots, A_{n-1},I)\] where $I$ is the $p \times p$ identity matrix. By \cite[Proposition~4.1]{DomokosRelative}, the induced morphism
\[\rho^*: \C[\M_p^{n}]^H \rightarrow \C[\M_p^{n-1}]^G\] is surjective. This can be used to show that for any separating set $S \subseteq \C[\M_p^{n}]^H$, $\rho^*(S) \subseteq \C[\M_p^{n-1}]^G$ is a separating set, see \cite[Corollary~6.3]{DomokosSemi}. 

\begin{proof}[Proof of Theorem \ref{semiconj}]
Let $n \geq 3$, $p \geq 4$ and suppose $S \subseteq \C[\M_p^n]^H$ is a separating set. Then $\rho^*(S)$ is a separating set for $\C[\M_p^{n-1}]^G$. Since $n-1 \geq 2$, by Theorem \ref{mainsepvar} we have
\[|S| \geq (n-2)p^2+p.\]
Since $\dim(\C[\M_p^n])^H = (n-2)p^2+2$ and $p-2 \geq 2$, we get the desired result. 
\end{proof}

\begin{rem}
If we were interested in finding the smallest $p$ such that $\C[\M_p^n]^H$ contains no polynomial or hypersurface separating set for given $n$ we could take $p=2$ when $n \geq 5$ and $p=4$ for $n=3$ and $n=4$. One might expect that $p=3$ would suffice for $n=4$, but this cannot be proved using the methods above.
\end{rem}

\bibliographystyle{plain}
\bibliography{MyBib}

\end{document}